\documentclass[11pt]{amsart}
\usepackage{mathrsfs}
\usepackage{longtable,booktabs,needspace}
\newcommand{\qprf}{\operatorname{qprf}}
\newcommand{\Apr}{A_{\mathrm{pr}}}

\usepackage{latexsym}
\usepackage{amssymb}
\usepackage{amsfonts}

\usepackage{amscd}
\usepackage{bbm} 
\usepackage{mathabx}
\usepackage{soul}

\usepackage{color}

\usepackage{hyperref}
\hypersetup{
    pdftitle={Three-dimensional noncommutative Fano minifolds},
    pdfauthor={Konstantin Loginov and Dmitri Orlov},
    colorlinks,
    citecolor=black,
    filecolor=black,
    linkcolor=black,
    urlcolor=black
}

\usepackage[pdf,color,all]{xy}

\newtheorem{proposition}{Proposition}[section]

\newtheorem{introtheorem}{Theorem}

\newtheorem{introproposition}[introtheorem]{Proposition}
\newtheorem{lemma}[proposition]{Lemma}
\newtheorem{definition}[proposition]{Definition}
\newtheorem{theorem}[proposition]{Theorem}

\newtheorem{corollary}[proposition]{Corollary}

\newtheorem{remark}[proposition]{Remark}

\newtheorem{lemma-definition}[proposition]{Lemma-Definition}

\newcounter{tmp}

\def\A{{\mathcal A}}
\def\B{{\mathcal B}}
\def\C{{\mathcal C}}

\def\D{{\mathcal D}}

\def\R{{\mathcal R}}
\def\L{{\mathcal L}}

\def\cS{{\mathcal S}}

\def\cZ{{\mathcal Z}}

\def\ZZ{{\mathbb Z}}

\def\br{{\mathbf r}}

\def\ZZ{{\mathbb Z}}
\def\QQ{{\mathbb Q}}

\def\Hom{\operatorname{Hom}}

\def\Ext{\operatorname{Ext}}

\def\Im{\operatorname{Im}\,}

\def\kk{{\Bbbk}}

\def\QGr{\operatorname{QGr}}
\def\Tors{\operatorname{Tors}\!\operatorname{--}\!}

\def\mM{\mathsf M}

\def\mP{\mathsf P}
\def\barP{\overline{\mP}}

\def\mS{\mathsf S}

\def\m0{\mathsf 0}

\def\rr{\mathrm{r}}
\def\rs{\mathrm{s}}

\def\wt{\widetilde}

\def\Md{\operatorname{Mod}\!\operatorname{--}\!}

{\endgroup\hfill$\Box$}

{\endgroup\hfill$\Box$}

\def\mod{\operatorname{mod}\!}

\def\br{\bar}
\def\wb{\widebar}

\title[]{Three-dimensional noncommutative  Fano minifolds}

\author[]{Konstantin Loginov}
\address{Steklov Institute of Mathematics,
Gubkina str. 8, Moscow, Russia}
\email{loginov@mi-ras.ru}

\author[]{Dmitri Orlov}

\address{Steklov Institute of Mathematics,
Gubkina str. 8, Moscow, Russia}
\email{orlov@mi-ras.ru}

\date{}
\dedicatory{}

\keywords{Non-commutative algebraic geometry}

\makeatletter
\@namedef{subjclassname@2020}{\textup{2020} Mathematics Subject Classification}
\makeatother
\subjclass[2020]{14A22}

\begin{document}

\begin{abstract}
We give a construction of the previously announced family of three-dimensional noncommutative Fano minifolds of Euler degree $72.$
We prove that their defining $\ZZ$\!--algebras are Koszul and Artin--Schelter regular, and compute the Hilbert series of their section algebras.
\end{abstract}

\maketitle
\section*{Introduction}
We give a construction of the family $\mathsf M_{72}(p,q)$ of three-dimensional noncommutative Fano minifolds of Euler degree $72.$
Announced in \cite{Orl26} and recalled in \cite[Section 7.2]{LO}, this family gives the first example of a minifold whose Euler form is not realized by any commutative three-dimensional minifold.
Its existence extends the geometry of Fano minifolds beyond the commutative classification.

Full exceptional collections generating geometric helices on three-dimensional minifolds have Euler matrices in five mutation classes, distinguished by the Euler degrees $22,40,54,64,72$ \cite[Theorems I and II]{LO}.
The first four are realized by $X_{22},\; V_5,\; Q^3,$ and $\mathbb P^3,$ respectively.
The family $\mathsf M_{72}(p,q)$ realizes the fifth class.
No member of this family can be obtained from a commutative minifold by a deformation.
The commutative examples have four distinguished representatives: the three standard minifolds $\mathbb P^3,\; Q^3,\; V_5,$ and the Mukai--Umemura threefold in the family $X_{22}$ \cite{MU83}.
The member $\mathsf M_{72}(1,1)$ adds a noncommutative example to this list.

Let $\kk$ be an algebraically closed field of characteristic zero and let $p,q\in\kk^*.$
Write $\L=\L(p,q)$ for the quadratic $\ZZ$\!--algebra defined in \eqref{defalg} by the potential $\mathsf\Phi_{p,q}$ of \eqref{eq:potential}, and $\widetilde R$ for its even section algebra defined in \eqref{eq:section-algebra}.
We prove the following results, stated in \cite[Theorem 7.5, Proposition 7.6 and the subsequent discussion]{LO}, \cite[Theorems 2--5]{Orl26}.

\begin{introtheorem}\label{stmt:intro-A}
The algebra $\L$ is Koszul and AS-regular of dimension $4$ and Gorenstein parameter~$4.$
\end{introtheorem}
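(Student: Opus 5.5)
The plan is to treat $\L$ as a derivation-quotient (Jacobi) $\ZZ$\!--algebra attached to the potential $\mathsf\Phi_{p,q}$ and to write down explicitly the candidate Koszul resolution of each simple module. Concretely, for every $i\in\ZZ$ I will use the self-dual complex of ``superpotential type''
\[
0\to P_{i+4}\otimes W_4\to P_{i+3}\otimes W_3\to P_{i+2}\otimes W_2\to P_{i+1}\otimes W_1\to P_i\to S_i\to 0 .
\]
Here $P_j=\bigoplus_{k}\L_{jk}$ is the indecomposable projective and $S_i$ is the simple module at vertex $i$. The spaces $W_1=\L_{i,i+1}$ and $W_2=$ (relations) come from \eqref{defalg}. $W_3$ is the dual of the relation space, and $W_4=\kk$ is spanned by the potential $\mathsf\Phi_{p,q}$. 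The differentials are obtained by cyclic partial derivatives of $\mathsf\Phi_{p,q}$, in the standard Bocklandt/Ginzburg pattern. A short check shows that this is a complex: the compositions vanish because of cyclic invariance of the potential and the definition of the relations. I will also check that the ranks $\dim W_k$ read off from the Euler matrix of degree $72$ are symmetric: $(1,n_1,n_2,n_1,1)$ in the appropriate twisted sense.

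Koszulity and AS-regularity then reduce to showing that this complex is exact. Given exactness, the complex is the minimal linear projective resolution of $S_i$, so $\L$ is Koszul. The resolution has length $4$, and its last term is $P_{i+4}$. Applying $\Hom(-,\L)$ produces the same complex for the opposite algebra, by the symmetry of the differentials coming from the potential. Hence $\Ext^k(S_i,\L)=0$ for $k\neq4$, while $\Ext^4(S_i,\L)$ is one-dimensional and concentrated at vertex $i+4$. This is exactly AS-regularity of dimension $4$ with Gorenstein parameter $4$.

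For exactness I would use the Hilbert series criterion rather than a direct cycle computation. First, I will compute $\dim\L_{ij}$ for the quadratic $\ZZ$\!--algebra, for example via a Gröbner/PBW basis: choose an ordering of the arrows on each level $\L_{i,i+1}$ so that the leading terms of the relations give a quadratic monomial algebra. I will then check that there are no ambiguities beyond degree $3$, which is where the genericity $p,q\in\kk^*$ enters. Next, I will show that the resulting matrix Hilbert series $H(t)$ satisfies $H(t)\cdot E(t)=\mathrm{Id}$, where $E(t)$ is the matrix polynomial built from the ranks of the complex above. Finally, I will combine these two facts with exactness at the two ends, which is automatic: the complex is exact at $S_i$ and at $P_{i+1}\otimes W_1$ by construction. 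Together these force exactness everywhere, by the usual argument that a complex of length $4$ with exact ends and the correct Euler characteristic in all degrees, plus injectivity of the last map, is exact. Injectivity of $P_{i+4}\to P_{i+3}\otimes W_3$ will follow from the PBW basis, which shows $\L$ has no left zero-divisors of the relevant form.

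The main obstacle is the PBW/Gröbner step: confirming that the overlaps in degree $3$ resolve for all $p,q\in\kk^*$, so that the Hilbert series is the predicted one. My first attempt would be to find a monomial order adapted to the torus symmetry of $\mathsf\Phi_{p,q}$, which makes the weights diagonalize the relations. If that fails, the fallback is to prove exactness at a special point (e.g. $p=q=1$, using extra symmetry) and then deduce it for all $p,q$. That deduction uses upper semicontinuity of the homology of the flat family of complexes, together with separate control of the degenerate loci.
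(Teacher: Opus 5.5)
There is a genuine gap in the step that is supposed to give exactness, and with it Koszulness. Take your complex $0\to P_{i+4}\otimes W_4\to P_{i+3}\otimes W_3\to P_{i+2}\otimes W_2\to P_{i+1}\otimes W_1\to P_i\to\mS_i\to0$. Exactness at $\mS_i$, $P_i$ and $P_{i+1}\otimes W_1$ is automatic for a quadratic algebra, and you plan to get injectivity on the left separately. That still leaves two positions, $P_{i+2}\otimes W_2$ and $P_{i+3}\otimes W_3$, where homology can survive. The identity $H(t)E(t)=\mathrm{Id}$ only says that the alternating sum of homology dimensions vanishes in each internal degree, i.e.\ $\dim H_2=\dim H_3$ degreewise. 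It does not force either to vanish.

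The ``usual argument'' you invoke is the length-three argument for AS-regular algebras of dimension three, where only one middle position is unknown; it does not extend to length four. Exactness at $P_{i+2}\otimes W_2$ amounts to saying that the third term of the minimal resolution of $\mS_i$ is $P_{i+3}\otimes W_3$. That is a genuine part of Koszulness, and Hilbert series data cannot certify it (numerically Koszul quadratic algebras that are not Koszul exist). Your Gorenstein step applies $\Hom(-,\L)$ to what is assumed to be a resolution, so it inherits the gap.

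The fallback does not repair this. Semicontinuity gives exactness in each fixed internal degree only off a proper closed subset of the $p/q$-line. Intersecting over all degrees gives exactness at best for all but countably many values of $p/q$, not for all $p,q\in\kk^*$. It also presupposes an independent proof at the special point, which you do not supply.

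If your Gr\"obner step really produced PBW generators for $\L$ (checked on cubic words as in Remark~\ref{triplePBW}), Theorem~\ref{PP} would give Koszulness directly and the Euler-characteristic step would be unnecessary. But that is exactly the unverified part. The paper never exhibits a PBW basis of $\L$: the basis in Lemma~\ref{lem:normal-basis} uses the degree-two elements $t_j$ and is deduced after Koszulness.

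The paper works on the dual side instead. It identifies $\widehat\L^{\mathrm{op}}$ with the reindexed $J$-Veronese $\mathcal F_J$ of $F=S_\eta/(y_1^3,y_2^3,y_3^3)$ (Proposition~\ref{prop:dual-veronese}). It proves $\mathcal F_J$ Koszul by quotienting the PBW algebra $\cS_{\eta,J}$ by the regular normalizing degree-two sequences given by the $y_a^3$, and transfers Koszulness to $\L$ by quadratic duality. The dimensions of $F$ then give $\widehat\L_i^{i+4}\cong\kk$ and $\widehat\L_i^j=0$ for $j-i>4$, hence global dimension four. The Frobenius pairing on $F$ makes $\widehat\L$ Frobenius of index four, and Proposition~\ref{prop:bp-frobenius} gives the Gorenstein condition. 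The Hilbert series of $\L$ is an output of this argument (Corollary~\ref{cor:hilbert-A}), not an input.

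Two smaller corrections. First, $W_3$ is not the dual of the relation space. It is the span of first derivatives of $\mathsf\Phi$ (images of the maps in \eqref{firstder}), of dimension $3$ or $6$, so the ranks are $(1,3,7,6,1)$ or $(1,6,7,3,1)$ according to the parity of $i$. Second, reading these ranks off the Euler matrix is circular, since that matrix is computed from $\L$.
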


\begin{introproposition}\label{stmt:intro-B}
The algebra $\widetilde R$ is quadratic, Koszul, noetherian, AS-Gorenstein, and of polynomial growth, with graded component dimensions given by
\[
 \dim_{\kk}\widetilde R_m=3/2\,(m+1)^3-1/2\,(m+1),\qquad m\ge0.
\]
\end{introproposition}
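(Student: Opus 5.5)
We deduce Proposition~\ref{stmt:intro-B} from Theorem~\ref{stmt:intro-A}, using the standard dictionary between AS-regular $\ZZ$-algebras and their Veronese-type section algebras. We take $\widetilde R$ as defined in \eqref{eq:section-algebra}: it is the graded algebra $\widetilde R=\bigoplus_{m\ge0}\L_{0,2m}$ (or the analogous even-step diagonal), with multiplication induced from the $\ZZ$-algebra structure. The plan has four steps.

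\textbf{Step 1: quadraticity and Koszulity.} A Veronese-type diagonal of a Koszul $\ZZ$-algebra is again Koszul. This is the $\ZZ$-algebra analogue of Backelin's theorem that Veronese subalgebras of Koszul algebras are Koszul. We prove it in the usual way. Write $\widetilde R_m=\L_{0,2m}$. The Koszul resolutions of the simple modules of $\L$ are linear, so the diagonal $\operatorname{Tor}$ groups of the restricted modules vanish. Equivalently, $\widetilde R$ has a linear Koszul resolution of its trivial module. Koszul algebras are quadratic, so quadraticity of $\widetilde R$ comes for free.

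\textbf{Step 2: dimension count.} Theorem~\ref{stmt:intro-A} says $\L$ is AS-regular of dimension $4$ with Gorenstein parameter $4$. Its Hilbert series is therefore determined by the Koszul dual data, that is, by the Euler matrix of the exceptional collection.
\begin{itemize}
\item The dimensions $\dim\L_{i,j}$ satisfy a linear recurrence whose coefficients are read off from the Koszul complex $0\to\L_{i+4,j}\to\L_{i+3,j}^{\oplus a}\to\cdots$. The multiplicities are the dimensions of the quadratic dual components, which are fixed by the potential $\mathsf\Phi_{p,q}$.
\item Equivalently, $\dim\L_{i,j}=\chi(E_i,E_j)$ for the helix, computed from the Euler form of degree $72$.
\end{itemize}
Solving the recurrence gives a cubic polynomial in $j-i$ on the even sublattice. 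The $\ZZ$-algebra may be only $2$-periodic rather than $1$-periodic, which is why one uses the even section algebra. We then check that this cubic equals $\tfrac32(m+1)^3-\tfrac12(m+1)$, using the initial values $1,\ 12,\ \dots$ for $m=0,1,\dots$. Polynomial growth of degree $4$ in the sense of GK-dimension follows at once.

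\textbf{Step 3: noetherianity and AS-Gorenstein.} For AS-Gorenstein we use local duality and the Gorenstein parameter. Regularity of $\L$ gives
\[
\operatorname{Ext}^q(\kk,\L)=\kk(\text{shift by }4)
\]
concentrated in degree $q=4$. Restricting to the even diagonal, $R\Gamma_{\mathfrak m}(\widetilde R)$ is concentrated in cohomological degree $4$ with dual a shift of a rank-one module. Because $4$ is even, the shift lands back in the even lattice; this is exactly what makes $\widetilde R$ AS-Gorenstein rather than merely Cohen--Macaulay.

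For noetherianity we use the noetherian property of AS-regular $\ZZ$-algebras of dimension $4$ with the Hilbert growth above. Concretely, we would show that the category of tails is equivalent to $\QGr$ of $\L$, which has a full exceptional collection. $\widetilde R$ is then a section algebra of an ample sequence in a noetherian abelian category, and the Artin--Zhang theorem applies.

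\textbf{Main obstacle.} We expect the noetherian property to be the main obstacle, since it does not follow formally from Koszulity and regularity. The first approach to try is to exhibit a normal regular element, or a central subalgebra over which $\widetilde R$ is finite, coming from the potential $\mathsf\Phi_{p,q}$. Failing that, we would establish ampleness (the Artin--Zhang conditions) directly via the cohomology vanishing supplied by the Gorenstein parameter $4$.
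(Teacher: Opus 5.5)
Your Steps 1 and 2 essentially follow the paper: Koszulity of the even restriction $\L_{2\ZZ}$ by the Veronese argument of Backelin and Polishchuk--Positselski, and the Hilbert series from $\widehat H(-z)H_\L(z)=I_2$. \textbf{The genuine gap is noetherianity}, and your Step~3 depends on it. The Artin--Zhang argument presupposes three things:
\begin{itemize}
\item $\QGr\L$ is locally noetherian;
\item $\pi(e_0\L)$ is a noetherian object;
\item the sequence $\pi(e_{-2m}\L)$ is ample.
\end{itemize}
None of this follows from a full exceptional collection. That is a statement about the triangulated category, and it says nothing about noetherianity of the heart: hearts of bounded t-structures on $D^b(\operatorname{mod} B)$ need not be noetherian. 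Moreover, Theorem~\ref{thm:geometric-tails}, which produces the collection, assumes $\L$ noetherian, so the argument is circular.

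Your fallback ("a normal regular element or a finite central subalgebra") names the right kind of tool, but finding it is the actual content, and the proposal does not supply it. The missing idea has three parts.
\begin{itemize}
\item The $2$-periodic normalizing sequence $t_0=x_1u^{23}-\zeta x_2u^{31},$ $t_1=u^{23}\br x_1-\zeta u^{12}\br x_3,$ with $\zeta$ a primitive cube root of unity. Its normality comes from the diagonal automorphism $\gamma$ of Proposition~\ref{prop:normal}.
\item The identification $\L/\langle\mathcal T\rangle\cong\cS_{\vartheta,J}$. This gives $\widetilde R/(t)\cong S_\vartheta^{(3)}$ for $t=\theta^{-1}(t_0)$.
\item Regularity of $t$ on both sides, obtained by comparing $H_\L=\widehat H/(1-z^2)^4$ with $H_\C=\widehat H/(1-z^2)^3$.
\end{itemize}
With this in hand, noetherianity follows in either of two ways. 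The paper shows that $\widetilde R$ is finite free over the iterated Ore extension generated by $t,y_{111},y_{222},y_{333}$. Alternatively, a connected graded algebra with a regular normal element of positive degree and noetherian quotient is noetherian.

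Your AS-Gorenstein argument restricts the local cohomology of $\L$ to even indices and uses $2\mid4$. This is the $\ZZ$-algebra form of the J{\o}rgensen--Zhang Veronese criterion, and it is a legitimate alternative route. As sketched, however, it needs local duality for $\ZZ$-algebras. You need noetherianity and the $\chi$-condition to pass from $R\Gamma_{\mathfrak m}(\widetilde R)$ back to $\underline{\Ext}^q_{\widetilde R}(\kk,\widetilde R)$. You also need a separate argument for finite injective dimension, which you do not address. The paper uses the same element $t$ here: the graded Rees lemma reduces both the Ext condition and the injective dimension to $S_\vartheta^{(3)},$ which is AS-Gorenstein with parameter $1$ since $3\mid3$.

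Two smaller points. First, $\dim\widetilde R_1=18-7=11,$ not $12$; the sequence is $1,11,39,94,\ldots$. Second, drop the alternative $\dim\L_i^j=\chi(E_i,E_j)$. The helix and its Euler matrix are derived from these dimensions and from noetherianity, so invoking them here is circular.
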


Put $\D=\mathcal D(\QGr\L)^c=\qprf\!\operatorname{-}\L$ with its natural DG enhancement.
Write $\barP_i=\pi(e_i\L)\in\D,$ where $\pi:\Md\L\to\QGr\L$ is the quotient functor defined in Section \ref{subsec:z-algebras-tails}.
Let $\theta$ be the degree-two periodicity of $\L$ defined in Section \ref{subsec:defining-algebra}.

\begin{introtheorem}\label{stmt:intro-C}
The category $\D$ is equivalent to the category of perfect complexes on a smooth proper three-dimensional noncommutative Fano minifold $\mathsf M_{72}(p,q).$
The objects
$
 \mathcal P=(\barP_3,\barP_2,\barP_1,\barP_0)
$
form a full strong exceptional collection and generate a geometric helix.
The Euler matrix of $\mathcal P$ is
\[
 \Apr=\begin{pmatrix}
 1&3&11&18\\0&1&6&11\\0&0&1&3\\0&0&0&1
 \end{pmatrix}.
\]
Let $S$ be the Serre functor of $\D.$
There are isomorphisms $S(\barP_i)\simeq\barP_{i+4}[3]$ under which $S(f)$ corresponds to $\theta^2(f)[3]$ for every morphism $f:\barP_i\to\barP_j.$
The anticanonical algebra with respect to $\barP_0$ is $R=\widetilde R^{(2)}.$
The automorphism $T_*$ induced by $T=S^{-1}[3]$ is maximally unipotent on $K_0(\D)\otimes\mathbb Q,$ and the Euler degree of $\mathsf M_{72}(p,q)$ is $72.$
\end{introtheorem}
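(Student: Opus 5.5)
We will derive Theorem \ref{stmt:intro-C} from Theorem \ref{stmt:intro-A} and the general machinery for AS-regular $\ZZ$\!--algebras: noncommutative Beilinson-type equivalences and Serre duality for $\QGr$ of AS-regular $\ZZ$\!--algebras, in the spirit of Bondal--Polishchuk, Van den Bergh and \cite{LO}.

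\emph{Exceptional collection.} Since $\L$ is Koszul and AS-regular of dimension $4$ with Gorenstein parameter $4$, the local cohomology of the projectives $e_i\L$ is concentrated in degree $4$. It is given by the Matlis duals $(\L e_{j})^*$ shifted by the Gorenstein parameter. Hence the functor $\pi$ is fully faithful on projectives in a window of length $4$, namely $\Hom_{\D}(\barP_i,\barP_j[k])=\Hom_{\L}(e_i\L,e_j\L)=e_j\L e_i$ for $0\le j-i\le 3$ and $k=0$, and these groups vanish otherwise. This gives strongness and exceptionality of $(\barP_3,\barP_2,\barP_1,\barP_0)$. The Koszul resolution of the simple modules has length $4$ by AS-regularity. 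It expresses $\barP_{i+4}$ in terms of $\barP_i,\dots,\barP_{i+3}$, so every $\barP_j$ lies in the subcategory generated by four consecutive ones. Since the $\barP_j$ generate $\D$ (the $\pi(e_j\L)$ generate $\QGr\L$ by the usual ampleness argument), the collection is full. Smoothness and properness follow from the existence of a full exceptional collection with finite-dimensional $\Hom$'s. The Euler matrix $\Apr$ is read off from the dimensions $\dim e_j\L e_i$ for $j-i=1,2,3$, which equal $3,6,11$ and $18$ by the Hilbert series of $\L$. That Hilbert series is $1/$(the characteristic polynomial determined by the Koszul dual), computed from the potential $\mathsf\Phi_{p,q}$.

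\emph{Serre functor.} Local duality for AS-Gorenstein $\ZZ$\!--algebras identifies the Serre functor on $\D$ with the twist by the Nakayama-type automorphism shifted by the Gorenstein parameter. Concretely, $S(\barP_i)\simeq\barP_{i+4}[3]$, where $3=\dim-1$. The action on morphisms is given by the Nakayama automorphism of the Frobenius Koszul dual, which for a potential algebra is determined by the twisting of the potential. We expect to show that this is exactly $\theta^2$, using the definition of the periodicity $\theta$ and the invariance properties of $\mathsf\Phi_{p,q}$. This identification is the main obstacle. We would first compute the Nakayama automorphism of $\L^!$ explicitly on generators, then check that it agrees with $\theta^2$ up to the signs coming from the potential.

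\emph{Helix, anticanonical algebra and degree.} Since $S^{-1}[3]$ sends $\barP_{i+4}$ to $\barP_i$, the sequence $(\barP_i)_{i\in\ZZ}$ is a helix of period $4$. It is geometric because $\Hom(\barP_i,\barP_j[k])=0$ for $k\ne0$ and all $i\le j$. This holds since local cohomology vanishes in the relevant degrees, which follows from Theorem \ref{stmt:intro-A}. The anticanonical algebra $\bigoplus_n\Hom(\barP_0,T^{-n}\barP_0)=\bigoplus_n e_{4n}\L e_0$ coincides with the Veronese $\widetilde R^{(2)}$, by the definition \eqref{eq:section-algebra} of the even section algebra. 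Finally, $T_*$ acts on $K_0$ with matrix $\pm\Apr^{-1}\Apr^{\top}$. Maximal unipotency and the Euler degree $72$ then follow from the invariants of $\Apr$ established in \cite{LO}, or directly from the leading coefficient of the Hilbert polynomial of $R$, using Proposition \ref{stmt:intro-B}.
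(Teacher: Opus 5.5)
There is a genuine gap, and it sits exactly at the step you call the main obstacle. The statement asserts more than $S(\barP_i)\simeq\barP_{i+4}[3]$; on objects that already follows from the helix property \eqref{eq:helix-serre}. It also asserts that, under these isomorphisms, $S(f)$ corresponds to $\theta^2(f)[3]$. Because $\widehat\theta^{\,2}$ also has degree four, the Nakayama periodicity of $\widehat\L$ is a priori $\widehat\theta^{\,2}$ composed with some index-preserving automorphism, for instance a diagonal rescaling of generators by scalars depending on $p,q,\nu$. Saying ``we expect to show'' does not rule this out.

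The gap also affects your identification of the anticanonical algebra. ``By the definition'' only matches graded vector spaces. The product $f\star g=T^r(f)\circ g$ of \eqref{eq:anticanonical-definition} agrees with $a\star b=\theta^{-n}(a)b$ of \eqref{eq:section-algebra} only once you know $T(f)=\theta^{-2}(f)$, which is the missing morphism-level statement.

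The paper supplies the missing idea as follows.
\begin{itemize}
\item Proposition \ref{prop:dual-veronese} identifies $\widehat\L^{\mathrm{op}}$ with a reindexed Veronese of the truncated quantum plane $F=S_\eta/(y_1^3,y_2^3,y_3^3)$.
\item On $F$, the coefficient of $y_1^2y_2^2y_3^2$ gives \emph{symmetric} pairings $F_r\otimes F_{6-r}\to\kk$, because the $\nu$-exponent is invariant under $\alpha\leftrightarrow\beta$ when $\alpha+\beta=(2,2,2)$.
\item This gives the trace identity \eqref{eq:graded-trace}. Its sign is absorbed by $\varepsilon'_i=(-1)^i\varepsilon_i$, so the Frobenius periodicity is exactly $\widehat\theta^{\,2}$.
\item The Koszul equivalence of $\D$ with the stable category of $\widehat\L$-modules \cite[Propositions 5.2--5.3]{BP} sends $\barP_i$ to $\widehat{\mS}_i[-i]$. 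Combined with $S=\nu[-1]$ on that stable category \cite[Lemma 4.3]{GKO}, it transports this to $\D$.
\end{itemize}

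The second missing input is noetherianity of $\L$. Everything you invoke for $\QGr\L$ depends on it in the standard references:
\begin{itemize}
\item computing $\Ext$ in the quotient via local cohomology;
\item compactness of the $\barP_i$, which is needed even to place them in $\D$, and their generation by the ``usual ampleness argument'';
\item local duality.
\end{itemize}
Theorem \ref{thm:geometric-tails} lists noetherianity as a hypothesis. It does not follow from Theorem \ref{stmt:intro-A}. The paper proves it separately as Proposition \ref{prop:noetherian}: it shows $\widetilde R$ is finite free over the iterated Ore extension $\Gamma$ generated by $y_{111},y_{222},y_{333},t$, and then passes to $\L$ through $\B=\L_{2\ZZ}\oplus\L_{2\ZZ+1}$.

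Smaller points:
\begin{itemize}
\item With $T(\barP_i)=\barP_{i-4}$, the anticanonical algebra is $\bigoplus_{m\ge0}\Hom(\barP_0,T^m\barP_0)=\bigoplus_m\L_0^{-4m}$, not the version with $T^{-n}$.
\item $\Hom(\barP_i,\barP_j)=\L_i^j$ is nonzero for all $i\ge j$; your window indices are reversed.
\item $\pm\Apr^{-1}\Apr^{\mathsf t}$ is $\pm S_*$, whereas $T_*=-\Apr^{-\mathsf t}\Apr$. This is harmless for unipotence.
\end{itemize}
Your final step agrees with the paper: the Hilbert polynomial of $R$ has leading term $12m^3$, which gives $D=72$.
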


Theorem \ref{stmt:intro-A}, Proposition \ref{stmt:intro-B}, and Theorem \ref{stmt:intro-C} are proved in Theorems \ref{thm:koszul} and \ref{thm:regular}, Propositions \ref{prop:noetherian} and \ref{prop:section-algebras}, and Theorem \ref{thm:minifold}, respectively.

The authors used OpenAI's ChatGPT to investigate possible proofs, examine preliminary arguments, verify computations, and locate relevant literature.
They independently checked the mathematical arguments and prepared the final text, for which they accept full responsibility.

\section{Preliminaries}
\subsection{\texorpdfstring{$\ZZ$\!--algebras and tails}{Z-algebras and tails}}\label{subsec:z-algebras-tails}
We fix the notation for $\ZZ$\!--algebras, their module categories, and their quotient categories by torsion modules.
For background on $\ZZ$\!--algebras, quadratic duality, and helices, we refer to \cite{BP} and \cite{VB}.

A $\ZZ$\!--algebra is a $\kk$\!--linear category with objects indexed by $\ZZ.$
We write $\A_i^j=\Hom_{\A}(i,j),$ so multiplication is
$\A_j^k\otimes\A_i^j\to\A_i^k,$ and denote the local identity at $i$ by $e_i.$
A right module is a family $(\mM_i)_{i\in\ZZ}$ with actions
$\mM_j\otimes\A_i^j\to\mM_i.$
The category $\Md\A$ has projective generators $\mP_i=e_i\A,$ with
$\Hom_{\A}(\mP_i,\mM)=\mM_i$ and $\Hom_{\A}(\mP_i,\mP_j)=\A_i^j.$
A graded algebra $B$ determines the $\ZZ$\!--algebra $\A_i^j=B_{i-j},$ and $\Md\A$ is then the category of graded right $B$\!--modules.

The algebra $\A$ is \emph{connected} if its components are finite-dimensional, $\A_i^i=\kk,$ and $\A_i^j=0$ for $i<j.$
It is \emph{coconnected} if the last inequality is reversed.
Write $\mS_i$ for the simple module supported at $i.$
A periodicity of degree $r$ is an automorphism $\theta$ taking $\A_i^j$ to $\A_{i+r}^{j+r}.$
The $J$\!--Veronese subalgebra is the restriction $\A_J=\bigoplus_{i,j\in J}\A_i^j$ to an ordered subset $J\subseteq\ZZ.$

A connected algebra is \emph{generated in degree one} if it is generated by its components $\A_{i+1}^{i}$ and the local identities $e_i.$
A coconnected algebra is \emph{generated in degree $-1$} if it is generated by its components $\A_i^{i+1}$ and the local identities $e_i.$
Let $\A$ be connected and generated in degree one, or coconnected and generated in degree $-1.$
An element $m\in\mM_j$ is \emph{torsion} if $m\A_i^j=0$ for all sufficiently large $i-j$ in the connected case, or sufficiently large $j-i$ in the coconnected case.
The torsion modules form a localizing Serre subcategory $\Tors\A.$
The finite-dimensionality of the components and generation in degree $1$ or $-1$ ensure closure under extensions.
We put
\[
 \QGr\A=\Md\A/\Tors\A,\qquad
 \qprf\!\operatorname{-}\A=\mathcal D(\QGr\A)^c,
\]
where $\pi:\Md\A\to\QGr\A$ is the exact quotient functor and the superscript $c$ denotes compact objects.
We call $\QGr\A$ the \emph{category of tails} of $\A.$
The algebra $\A$ is \emph{right noetherian} if every right submodule of each $e_i\A$ is finitely generated, and \emph{left noetherian} if every left submodule of each $\A e_i$ is finitely generated, where $(e_i\A)_j=\A_j^i$ and $(\A e_i)_j=\A_i^j.$
We call $\A$ \emph{noetherian} if it is both right and left noetherian.

We use \cite[Lemma 1.16]{LO} in the following form.
\begin{proposition}\label{prop:veronese-tails}
Let $\A$ be a connected $\ZZ$\!--algebra generated in degree one, and fix integers $a$ and $r>0.$
Put $J=a+r\ZZ.$
Restriction and extension of scalars induce mutually inverse equivalences
$\QGr\A\simeq\QGr\A_J.$
The corresponding assertion holds for coconnected algebras generated in degree~$-1.$
\end{proposition}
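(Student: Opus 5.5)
We prove the equivalence $\QGr\A\simeq\QGr\A_J$ directly, by comparing the two module categories and showing that the comparison functors respect torsion. Let $\iota:\A_J\to\A$ be the inclusion of the full subcategory on the objects of $J$. It induces two functors. Restriction $\rho:\Md\A\to\Md\A_J$ sends $\mM$ to $(\mM_j)_{j\in J}$. Extension of scalars $\sigma=-\otimes_{\A_J}\A$ is left adjoint to $\rho$, and satisfies $\sigma(e_j\A_J)=e_j\A$ for $j\in J$. Both functors are additive. Restriction is exact. We will show that each preserves torsion and that both unit and counit become isomorphisms modulo torsion. The functors then descend to mutually inverse equivalences of the Serre quotients.

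\textbf{Step 1: $\rho$ is exact and preserves and reflects torsion.} Exactness is clear, since $\rho$ just forgets components. Preservation of torsion is also clear: if $m\A_i^j=0$ for all $i-j\gg0$, then in particular this holds for $i\in J$.

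Now suppose $\mM$ is an $\A$-module with $\rho\mM$ torsion, and take $m\in\mM_k$ for arbitrary $k$. We use generation in degree one: for $i\ge j\ge k$ we have $\A_i^k=\A_i^j\A_j^k$. Choose $j\in J$ with $j\ge k$ and $j-k<r$. Then $m\A_j^k$ is a finite-dimensional subspace of $\mM_j$, because the components of $\A$ are finite-dimensional. Each of its finitely many basis elements is killed by $\A_i^j$ for $i-j\gg0$ with $i\in J$. Every large $i$ lies within distance $r$ of some $i'\in J$ with $i'\le i$, and $\A_i^k=\A_i^{i'}\A_{i'}^{j}\A_j^k$. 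Hence $m\A_i^k=0$ for $i\gg k$. So $\mM$ is torsion.

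\textbf{Step 2: unit and counit.} For a $\A_J$-module $N$, the unit $N\to\rho\sigma N$ is an isomorphism. Indeed, $\rho\sigma(e_j\A_J)=\rho(e_j\A)=e_j\A_J$, and $\rho\sigma$ is right exact and commutes with direct sums, so the claim extends from free modules to all $N$ via presentations.

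For an $\A$-module $\mM$, consider the counit $\sigma\rho\mM\to\mM$. Applying $\rho$ and using the unit isomorphism, we see that $\rho$ of the counit is an isomorphism. Hence $\rho$ of its kernel and of its cokernel vanish, and by Step 1 the kernel and cokernel are torsion.

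It remains to see that $\sigma$ carries torsion to torsion. Every component of a torsion $\A_J$-module $N$ is a union of finite-dimensional submodules killed in high degree. Since $\sigma$ commutes with colimits, $\sigma N$ is generated by elements annihilated in large degree. A torsion subquotient of $\Md\A$ is closed under colimits, so $\sigma N$ is torsion.

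Thus $\rho$ and $\sigma$ descend to functors between $\QGr\A$ and $\QGr\A_J$. For $\sigma$ we use its composite with the quotient functor, which is right exact and kills torsion; it then factors through the localization in the derived-free sense because of the adjunction. The unit and counit induce inverse isomorphisms on the quotients.

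The coconnected case is identical after reversing all inequalities, using generation in degree $-1$. The main subtle point is the reflection of torsion in Step 1, which is exactly where generation in degree one and finite-dimensionality of the components are needed.
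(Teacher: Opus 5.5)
The paper gives no proof of this statement; it quotes it from \cite[Lemma 1.16]{LO}. Your strategy is the natural one. Step~1 and the unit and counit assertions of Step~2 are correct.

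There is, however, a genuine gap where you descend $\sigma$ to the quotient categories. You argue that $\pi\circ\sigma$ factors through $\QGr\A_J$ because it is right exact, kills torsion, and ``because of the adjunction''. That criterion is not valid. To factor through the Serre quotient, $\pi\sigma$ must invert every morphism whose kernel and cokernel are torsion. For an injection $f\colon N'\to N$ with torsion cokernel, right exactness only shows that $\pi\sigma(f)$ is an epimorphism. Nothing in your argument, and nothing in the adjunction alone, controls $\ker\sigma(f)$.

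The missing idea is that $\pi\sigma$ is exact, and this follows from what you already proved. For an injection $N'\hookrightarrow N$, put $K=\ker(\sigma N'\to\sigma N)$. Since $\rho$ is exact and the unit is a natural isomorphism,
\[
 \rho K=\ker(\rho\sigma N'\to\rho\sigma N)\cong\ker(N'\to N)=0,
\]
so $K$ is torsion by the reflection statement of Step~1. Hence $\pi\sigma$ is an exact functor killing $\Tors\A_J$. Writing $\pi_J$ for the quotient functor of $\Md\A_J$, Gabriel's universal property of the Serre quotient yields $\bar\sigma$ with $\bar\sigma\circ\pi_J=\pi\circ\sigma$. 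Your unit and counit statements then show that $\bar\sigma$ and the exact functor $\bar\rho$ induced by $\rho$ are mutually inverse.

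Your argument that $\sigma$ preserves torsion also needs repair. Commuting with colimits does not explain why $\sigma N$ is generated by torsion elements. A direct proof must use generation in degree one: write $(n\otimes e_j)\A_i^j=n\otimes\A_{i'}^j\A_i^{i'}=(n\A_{i'}^j)\otimes\A_i^{i'}$ with $i'\in J$ and $i-r<i'\le i$. Reflection gives a shorter proof: if $N$ is torsion, then $\rho\sigma N\cong N$ is torsion, hence $\sigma N$ is torsion by Step~1.

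With these two repairs your proof is complete. Step~1 is then the only place where generation in degree one and finite-dimensionality are used, apart from guaranteeing that $\Tors\A$ and $\Tors\A_J$ are Serre subcategories.
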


\subsection{Quadratic algebras and duality}\label{subsec:quadratic-duality}
We fix the definitions used below.

\begin{definition}\label{def:quadratic-duality}
A connected $\ZZ$\!--algebra $\A$ is \emph{quadratic} if it is generated in degree one with defining relations in subspaces
$\R_{i+2}^{i}\subseteq\A_{i+1}^{i}\otimes\A_{i+2}^{i+1}.$
A coconnected algebra $\A$ is \emph{quadratic} if it is generated in degree $-1$ with defining relations in subspaces
$\R_i^{i+2}\subseteq\A_{i+1}^{i+2}\otimes\A_i^{i+1}.$
For a connected quadratic algebra $\A,$ its \emph{quadratic dual} $\widehat\A$ is defined by the generator and relation spaces
\[
 \widehat\A_i^{i+1}=(\A_{i+1}^{i})^*,\qquad
 \widehat\R_i^{i+2}=(\R_{i+2}^{i})^\perp
 \subseteq(\A_{i+2}^{i+1})^*\otimes(\A_{i+1}^{i})^*.
\]
Here $(\R_{i+2}^{i})^\perp$ is the annihilator of $\R_{i+2}^{i}.$
The algebra $\widehat\A$ is coconnected.
\end{definition}

\begin{definition}\label{def:koszul}
A connected quadratic $\ZZ$\!--algebra $\A$ is \emph{Koszul} if, for every $i\in\ZZ$ and $n\ge0,$ the term in homological degree $n$ of the minimal projective resolution of $\mS_i$ in $\Md\A$ is isomorphic to a direct sum of copies of $\mP_{i+n}=e_{i+n}\A.$
For a coconnected algebra, Koszulness is defined after reindexing the objects by $i\mapsto-i.$
\end{definition}
For a Koszul algebra, we have $\widehat\A_i^{i+n}=\Ext_{\A}^n(\mS_i,\mS_{i+n}),$ with multiplication given by composition of extensions.
A quadratic $\ZZ$\!--algebra is Koszul if and only if its quadratic dual is Koszul \cite[Chapter 2, Corollary 3.3]{PP}.

To define Poincar\'e--Birkhoff--Witt (PBW) bases, fix a connected quadratic $\ZZ$\!--algebra $\A$ and ordered bases
\[
 \cZ^p=\{z_1^p<\cdots<z_{d_p}^p\}
 \subset\A_{p+1}^p,\qquad d_p=\dim\A_{p+1}^p.
\]
For fixed $p$ and $n,$ order the words $z_{i_1}^p\cdots z_{i_n}^{p+n-1}$ lexicographically.

Choose a set of index pairs
$S_{p+2}^p\subseteq\{1,\ldots,d_p\}\times\{1,\ldots,d_{p+1}\}$
so that the classes of the words $z_{i_1}^pz_{i_2}^{p+1}$ with $(i_1,i_2)\in S_{p+2}^p$ form a basis of the quotient
\[
 \A_{p+2}^p=
 \bigl(\A_{p+1}^p\otimes\A_{p+2}^{p+1}\bigr)/\R_{p+2}^p
\]
and $\R_{p+2}^p$ has a basis of the following form, indexed by $\overline S_{p+2}^p,$ the set of pairs in $\{1,\ldots,d_p\}\times\{1,\ldots,d_{p+1}\}$ that do not belong to $S_{p+2}^p$:
\[
 z_{j_1}^p\otimes z_{j_2}^{p+1}
 -
 \sum_{\substack{(i_1,i_2)\in S_{p+2}^p\\
                 (i_1,i_2)<(j_1,j_2)}}
 c_{j_1j_2}^{i_1i_2}
 z_{i_1}^p\otimes z_{i_2}^{p+1}.
\]
We call the pairs in $S_{p+2}^p$ \emph{admissible}.
A word $z_{i_1}^p\cdots z_{i_n}^{p+n-1}$ is \emph{normal} if each adjacent pair is admissible:
\[
 (i_k,i_{k+1})\in S_{p+k+1}^{p+k-1},\qquad 1\le k<n.
\]

\begin{definition}
The ordered bases $\cZ^p$ are a \emph{set of PBW generators} if, for every $p\in\ZZ$ and $n\ge1,$ the images of the normal words form a basis of $\A_{p+n}^p$ under the natural projection
\[
 \A_{p+1}^p\otimes\cdots\otimes\A_{p+n}^{p+n-1}
 \longrightarrow\A_{p+n}^p.
\]
The images of the normal words, together with the local identities $e_p,$ form a \emph{PBW basis} of $\A.$
\end{definition}
\begin{definition}
A \emph{PBW $\ZZ$\!--algebra} is a quadratic $\ZZ$\!--algebra admitting a set of PBW generators.
\end{definition}

\begin{remark}\label{triplePBW}
To check the PBW property for the chosen bases, it is enough that, for every $p,$ the normal cubic words
\[
 z_{i_1}^pz_{i_2}^{p+1}z_{i_3}^{p+2},\qquad
 (i_1,i_2)\in S_{p+2}^p,\quad (i_2,i_3)\in S_{p+3}^{p+1},
\]
give a basis of $\A_{p+3}^p$ \cite[Chapter 4, Theorem 2.1 and Proposition 10.1]{PP}.
\end{remark}

For a coconnected algebra, the PBW definitions are applied after reindexing the objects by $p\mapsto-p.$
The following result is the $\ZZ$\!--algebra version of Priddy's theorem \cite[Chapter 4, Proposition 10.1]{PP}.
\begin{theorem}\label{PP}
Every PBW $\ZZ$\!--algebra is Koszul, and its quadratic dual is also a PBW $\ZZ$\!--algebra.
\end{theorem}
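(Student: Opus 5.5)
The plan is to transfer the classical arguments of Priddy and Polishchuk--Positselski to the $\ZZ$\!--algebra setting, working one object $p$ at a time; no new structure appears. The proof splits into two parts: Koszulness of a PBW algebra $\A$, and the PBW property of its quadratic dual $\widehat\A$.

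For Koszulness, I would use the filtration argument. Fix $p$, and filter each component $\A_{p+n}^p$ by the lexicographic order on words $z_{i_1}^p\cdots z_{i_n}^{p+n-1}$. The associated graded $\ZZ$\!--algebra $\operatorname{gr}\A$ is the monomial quadratic $\ZZ$\!--algebra with the same generators. Its relations kill exactly the products $z_{j_1}^pz_{j_2}^{p+1}$ with $(j_1,j_2)\in\overline S_{p+2}^p$, because each relation of $\R_{p+2}^p$ has leading term $z_{j_1}^p\otimes z_{j_2}^{p+1}$. The PBW hypothesis says precisely that the normal words form a basis of both $\A$ and $\operatorname{gr}\A$, so $\A$ is a flat filtered deformation of $\operatorname{gr}\A$. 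A monomial quadratic $\ZZ$\!--algebra is Koszul, because one can write down the minimal resolution of $\mS_i$ explicitly. Its $n$-th term is spanned by the chains $z_{i_1}^i\cdots z_{i_n}^{i+n-1}$ with all adjacent pairs non-admissible, tensored with $\mP_{i+n}$, and exactness is checked by a direct combinatorial contracting homotopy on words. Finally, Koszulness passes from $\operatorname{gr}\A$ to $\A$. The bar complex of $\A$ computing $\Ext^n_{\A}(\mS_i,\mS_{i+m})$ carries the induced filtration, and the associated spectral sequence starts from the diagonal cohomology of $\operatorname{gr}\A$. Hence $\Ext^n_{\A}(\mS_i,\mS_{i+m})=0$ for $m\ne n$, which is equivalent to the diagonal condition in Definition \ref{def:koszul}.

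For the dual, take the dual bases $\{(z_k^p)^*\}$ of $\widehat\A_p^{p+1}=(\A_{p+1}^p)^*$ with the opposite order, after the reindexing $p\mapsto -p$ of the coconnected case. The annihilator $(\R_{p+2}^p)^\perp$ then has a triangular basis whose leading terms are $(z_{j_2}^{p+1})^*\otimes(z_{j_1}^p)^*$ with $(j_1,j_2)\in S_{p+2}^p$. This is the usual linear-algebra fact that the orthogonal complement of an upper-triangular subspace is lower-triangular with complementary pivots. So the admissible pairs for $\widehat\A$ are the pairs in $\overline S_{p+2}^p$, and its normal words are the chains that index the monomial resolution above.

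It remains to show that these normal words are a basis of $\widehat\A$. By Remark \ref{triplePBW} it suffices to check cubic words. Since $\A$ is Koszul, $\widehat\A_i^{i+n}\cong\Ext^n_{\A}(\mS_i,\mS_{i+n})$. The spectral sequence above degenerates, because $\operatorname{gr}\A$ is Koszul and the sequence is concentrated on the diagonal. This gives $\dim\widehat\A_i^{i+3}=\dim\Ext^3_{\operatorname{gr}\A}(\mS_i,\mS_{i+3})$, which is the number of non-admissible chains of length $3$. These normal cubic words span $\widehat\A_i^{i+3}$, since every word reduces to normal ones by the triangular relations, so the dimension count forces them to be a basis.

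I expect the main obstacle to be matching the order conventions. The orders on $\widehat\A$ must be chosen, after the $p\mapsto-p$ reindexing, so that the leading terms of $(\R_{p+2}^p)^\perp$ are genuinely the complementary pairs. A second point needing care is the $\ZZ$\!--algebra version of the convergence of the filtration spectral sequence. This is harmless, since each component is finite-dimensional and the filtration is finite in each bidegree.
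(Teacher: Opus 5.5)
\textbf{Gap in the second half: the triangularity step fails under the paper's conventions.}

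Your first half is fine, and it is the Polishchuk--Positselski argument that the paper simply cites: the lexicographic filtration, the monomial associated graded algebra, the explicit resolution for monomial $\ZZ$-algebras, and the filtration spectral sequence on the bar complex.

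The problem is the triangularity step, and choosing the orders more carefully does not fix it. In the paper's convention $\widehat\R_p^{p+2}\subseteq(\A_{p+2}^{p+1})^*\otimes(\A_{p+1}^p)^*$. So after $p\mapsto-p$, a dual word $(z^{p+1}_{j_2})^*(z^p_{j_1})^*$ begins with the dual of the \emph{second} letter of $z^p_{j_1}z^{p+1}_{j_2}$, and the lexicographic order on dual words compares $j_2$ first. The annihilator has the basis
\[
 (z^{p+1}_{i_2})^*\otimes(z^p_{i_1})^*
 +\sum_{(j_1,j_2)}c^{i_1i_2}_{j_1j_2}\,(z^{p+1}_{j_2})^*\otimes(z^p_{j_1})^*,
 \qquad (i_1,i_2)\in S_{p+2}^p,
\]
where the sum runs over $(j_1,j_2)\in\overline S_{p+2}^p$ with $(j_1,j_2)>(i_1,i_2)$. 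A term with $j_1>i_1$ and $j_2<i_2$ outranks the intended pivot, however you order the dual bases.

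The complementary-pivot fact needs the order on dual pairs to be the exact reverse of the original order. Reversing the order within each basis cannot change which letter is compared first. What the fact does give here is that the leading terms of $(\R_{p+2}^p)^\perp$ are complementary to those of $\R_{p+2}^p$ for the right-to-left lexicographic order. So your argument would need $\A^{\mathrm{op}}$ to be PBW, and that does not follow from $\A$ being PBW.

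\textbf{With the paper's conventions the second assertion can actually fail.}

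Take the following data, with all other generator spaces zero:
\begin{itemize}
\item $\A_1^0=\kk x$, $\A_2^1=Y=\langle y_1,y_2,y_3\rangle$, and $\A_3^2=Z$ two-dimensional;
\item relations $xy_1=0$ and a generic four-dimensional $\R_3^1\subset Y\otimes Z$.
\end{itemize}
Then $\A$ is PBW:
\begin{itemize}
\item $\R_3^1\cap(y_1\otimes Z)=0$, so with $y_1<y_2<y_3$ its leading terms have first index $2$ or $3$;
\item hence there are no normal cubic words;
\item and $\A_3^0=0$, so the count matches.
\end{itemize}
Now look at the reindexed dual. Its words run through $Z^*$, $Y^*$, $(\kk x)^*$ in this order, and its cubic component is also zero.
\begin{itemize}
\item Let $\eta$ be the unique basis vector of $Y^*$ that is not a leading term of $(\kk y_1)^\perp$, and let $\zeta$ be the smallest basis vector of $Z^*$.
\item Since there must be no normal cubic words, being PBW would force $\zeta\otimes\eta$ to be a leading term of $(\R_3^1)^\perp\subset Z^*\otimes Y^*$.
\item Any element with that leading term has all its terms with first letter $\zeta$, so it is a decomposable tensor.
\item A generic two-dimensional subspace of $Z^*\otimes Y^*$ contains no nonzero decomposable tensor.
\end{itemize}
So the reindexed dual is not PBW for any choice of bases.

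\textbf{What your argument does prove.} It goes through verbatim for $\widehat\A^{\mathrm{op}}$, whose words list the dual letters in the same order as the words of $\A$. For that algebra the order on dual pairs is the exact reverse of the original one, and the admissible pairs are $\overline S_{p+2}^p$. This is the form in which the Polishchuk--Positselski result holds. It is also the algebra the paper actually works with in Proposition \ref{prop:dual-veronese}. Only the Koszul half of the theorem is used elsewhere in the paper.
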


\begin{definition}\label{ASreg}
A connected $\ZZ$\!--algebra $\A$ is \emph{AS-regular of dimension $d$ and Gorenstein parameter $a$} if it has global dimension $d,$ the dimensions $\dim_{\kk}\A_i^j$ are bounded by a polynomial in $i-j$ for $i\ge j,$ and it satisfies
\[
 \Ext_{\A}^q(\mS_i,\mP_j)=
 \begin{cases}\kk&q=d,\ j=i+a,\\0&\text{otherwise},\end{cases}
\]
together with the analogous condition for left modules.
\end{definition}

For graded algebras, we use the following notation.
A graded algebra $B=\bigoplus_{m\ge0}B_m$ is \emph{connected} if $B_0=\kk.$
Let $\operatorname{Gr}B$ denote its category of graded right modules with morphisms of degree zero.
For connected $B,$ the projection $B\to B_0=\kk$ makes $\kk$ a graded $B$-bimodule concentrated in degree zero.
The shift $\mM(a)$ is defined by $\mM(a)_m=\mM_{m+a}.$ Thus, the bimodule $\kk(a)$ is a one-dimensional graded $B$-bimodule concentrated in degree $-a.$
The graded Ext groups are
\[
 \underline{\Ext}_B^q(\kk,B)
 =\bigoplus_{j\in\ZZ}\Ext_{\operatorname{Gr}B}^q(\kk,B(j)),
\]
where the summand indexed by $j$ is placed in degree $j.$

\begin{definition}\label{def:as-gorenstein}
The connected graded algebra $B$ is \emph{AS-Gorenstein with parameters $(d,a)$} if its left and right injective dimensions are both $d$ and
\[
 \underline{\Ext}_B^q(\kk,B)=
 \begin{cases}\kk(a)&q=d,\\0&q\ne d,\end{cases}
\]
with the same condition on the left.
\end{definition}
Note that finite global dimension is not required for AS-Gorenstein algebras.

A connected graded algebra is \emph{AS-regular of dimension $d$ and Gorenstein parameter $a$} if its global dimension is $d,$ its graded components have polynomial growth, and it is AS-Gorenstein with parameters $(d,a).$

\begin{definition}\label{def:frobenius-duality}
A coconnected $\ZZ$\!--algebra $\A$ is \emph{Frobenius of index $d$} if its components satisfy
\[
 \dim_{\kk}\A_i^{i+d}=1,\qquad
 \A_i^j=0,\qquad j-i>d,
\]
and multiplication gives perfect pairings
\[
 \A_j^{i+d}\otimes\A_i^j\longrightarrow\A_i^{i+d},
 \qquad i\le j\le i+d.
\]
\end{definition}

We use the criterion of \cite[Proposition 4.1]{BP}.
\begin{proposition}\label{prop:bp-frobenius}
A connected Koszul $\ZZ$\!--algebra $\A$ of global dimension $d$ is Gorenstein with parameter $d$ if and only if its quadratic dual $\widehat\A$ is Frobenius of index $d.$
\end{proposition}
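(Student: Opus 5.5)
The statement is the criterion of \cite[Proposition 4.1]{BP}: a connected Koszul $\ZZ$\!--algebra $\A$ of global dimension $d$ is Gorenstein with parameter $d$ exactly when $\widehat\A$ is Frobenius of index $d$. I would prove it by computing $\Ext_{\A}^q(\mS_i,\mP_j)$ directly from the Koszul resolution. By Koszulness, the minimal resolution of $\mS_i$ has $n$-th term $K_n=\mP_{i+n}\otimes V_n$, where the multiplicity space is
\[
 V_n=(\widehat\A_i^{i+n})^*=\Ext^n_{\A}(\mS_i,\mS_{i+n})^*.
\]
The differential $K_n\to K_{n-1}$ is induced by the multiplication $\widehat\A_{i+n-1}^{i+n}\otimes\widehat\A_i^{i+n-1}\to\widehat\A_i^{i+n}$, dualized and composed with the generators $\A_{i+n}^{i+n-1}=(\widehat\A_{i+n-1}^{i+n})^*$. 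Global dimension $d$ gives $V_n=0$ for $n>d$, so $\widehat\A_i^j=0$ for $j-i>d$. Applying $\Hom_{\A}(-,\mP_j)$ and using $\Hom(\mP_{i+n},\mP_j)=\A_{i+n}^j$, we get the complex
\[
 C_i^\bullet(j):\quad C^n=\widehat\A_i^{i+n}\otimes\A_{i+n}^j,\qquad 0\le n\le d,
\]
with cohomology $\Ext^n_{\A}(\mS_i,\mP_j)$. This is the Koszul complex of $\A$ twisted by $\widehat\A$, viewed as a left module complex. The Gorenstein condition thus says that this complex is exact except at the top, with $H^d$ equal to $\kk$ exactly when $j=i+d$.

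For the implication from Frobenius to Gorenstein, I would use the perfect pairings. They identify $\widehat\A_i^{i+n}\cong(\widehat\A_{i+n}^{i+d})^*\otimes\widehat\A_i^{i+d}$, and $\widehat\A_i^{i+d}$ is one-dimensional. Under this identification, and after checking that the differentials match (the multiplication in $\widehat\A$ is associative and the pairing is compatible with it), $C_i^\bullet(j)$ becomes, up to the line $\widehat\A_i^{i+d}$, the $\kk$-dual of the Koszul complex that resolves the left simple module at the object $i+d$. That is,
\[
 C^n\cong \bigl(\widehat\A_{i+n}^{i+d}\bigr)^*\otimes\A_{i+n}^j.
\]
This is precisely the left Koszul resolution of the left simple ${}_{i+d}\mS$, evaluated at $j$, with homological degree $d-n$. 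Left Koszulness follows from right Koszulness via the quadratic dual, as in \cite{PP}. So the complex is exact except in degree $n=d$, where its cohomology is $\kk$ if $j=i+d$ and $0$ otherwise. The left condition follows symmetrically.

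For the converse, assume the Gorenstein condition. Then $C_i^\bullet(j)$ is a finite complex of left $\A$-modules in $j$ whose terms are free, since $\A_{i+n}^j$ as a function of $j$ is the left projective $\A e_{i+n}$. Its only cohomology is the left simple at $i+d$, concentrated in degree $d$. Hence it is a minimal left projective resolution of that simple; minimality holds because the differentials come from positive-degree generators. Comparing it with the left Koszul resolution of ${}_{i+d}\mS$ identifies the multiplicities: $\widehat\A_i^{i+n}\cong(\widehat\A_{i+n}^{i+d})^*$ up to the one-dimensional space $\widehat\A_i^{i+d}$, and in particular $\dim\widehat\A_i^{i+d}=1$. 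The identification of the maps shows that the isomorphism is induced by multiplication, so the pairings $\widehat\A_j^{i+d}\otimes\widehat\A_i^j\to\widehat\A_i^{i+d}$ are perfect.

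The main obstacle is the bookkeeping in matching differentials. One must verify that the Hom-dualized Koszul differential of the right resolution agrees with the left Koszul differential transported through the Frobenius pairing, which amounts to associativity plus a careful treatment of the duals. Signs and the twist by the line $\widehat\A_i^{i+d}$ have to be tracked, but these do not affect exactness.
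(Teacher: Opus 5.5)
The paper does not prove this statement; it quotes \cite[Proposition 4.1]{BP}. Your proposal supplies the standard argument behind that citation. Applying $\Hom_{\A}(-,\mP_j)$ to the Koszul resolution of $\mS_i$ gives the complex $C_i^n=\widehat\A_i^{i+n}\otimes\A e_{i+n}$, and the Frobenius pairing identifies it with the left Koszul resolution of ${}_{i+d}\mS$. Your forward direction is sound. Put $\phi_\alpha(\gamma)=\gamma\alpha$ for $\gamma\in\widehat\A_{i+n}^{i+d}$. Matching the differentials then reduces to $\phi_{\xi\alpha}(\beta)=\beta\xi\alpha=\phi_\alpha(\beta\xi)$, which is just associativity.

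In the converse, one step does not follow as written. Uniqueness of minimal resolutions gives only some abstract isomorphism between $C_i^\bullet$ and the left Koszul resolution of ${}_{i+d}\mS$. That yields $\dim\widehat\A_i^{i+d}=1$ and $\dim\widehat\A_i^{i+n}=\dim\widehat\A_{i+n}^{i+d}$. It does not show that the specific multiplication pairing $\widehat\A_{i+n}^{i+d}\otimes\widehat\A_i^{i+n}\to\widehat\A_i^{i+d}$ is nondegenerate. So your sentence saying the isomorphism is induced by multiplication still needs a proof. Either of the following closes it.

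\textbf{Option (a).} The map $\alpha\otimes a\mapsto\phi_\alpha\otimes a$ is a chain map from $C_i^\bullet$ to the left Koszul complex twisted by the line $\widehat\A_i^{i+d}$. This uses the same associativity identity, which does not need perfectness. In degree $d$ the map is the identity, so it is surjective, hence bijective, on the one-dimensional top cohomologies. All lower cohomology vanishes on both sides, so it is a quasi-isomorphism. It is a map between minimal bounded complexes of finitely generated projectives, so reducing modulo the radical and applying Nakayama shows it is a termwise isomorphism. That termwise isomorphism is exactly perfectness of the pairing.

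\textbf{Option (b).} Suppose $0\ne\alpha\in\widehat\A_i^{i+n}$ satisfies $\widehat\A_{i+n}^{i+d}\alpha=0$. Choose $m$ maximal with $\widehat\A_{i+n}^{i+n+m}\alpha\ne0$, and pick $\beta\ne0$ in that space. Since $\widehat\A$ is generated in degree one, $n+m<d$. Every degree-one element kills $\beta$ on the left, so $\beta\otimes e_{i+n+m}$ is a cocycle in $C_i^{n+m}$ at index $j=i+n+m$. At that index $C_i^{n+m-1}$ vanishes, so $\beta$ gives a nonzero class in $\Ext^{n+m}_{\A}(\mS_i,\mP_{i+n+m})$ with $n+m<d$. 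This contradicts the Gorenstein condition. Together with your dimension count, this gives perfectness.
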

We also use the terminology of \cite[Section 4]{BP} and \cite[Definition 8.12]{ELO}.
\begin{definition}\label{def:geometric-algebra}
A connected $\ZZ$\!--algebra is \emph{geometric of period $d$} if it is Koszul, has global dimension $d,$ and is Gorenstein with parameter $d.$
\end{definition}
Note that the geometric period need not be the least positive degree of a periodicity automorphism.

\begin{definition}\label{def:normalizing-sequence}
Let $\A=\bigoplus_{i, j\in \ZZ}\A_{i}^{j}$ be a connected $\ZZ$\!--algebra. A sequence $\mathcal T=\{t_j\in \A_{j+k}^{j},\; j\in \ZZ\}$ of elements of degree $k$  is called \emph{normalizing}
if the right ideal $\bigoplus_{ j\in \ZZ} t_j\A$ coincides with the left ideal $\bigoplus_{ j\in \ZZ} \A t_j.$
Thus, for any pair $i> j$ we have an equality $\A^{j}_{i}t_i=t_j\A^{j+k}_{i+k}$ of subspaces of $\A^{j}_{i+k}.$
Such a sequence is \emph{regular on both sides} if the maps
$\A_i^{j+k}\xrightarrow{\,t_j\,\cdot\,}\A_i^j$ and
$\A_{i-k}^{j}\xrightarrow{\,\cdot\,t_{i-k}\,}\A_i^j$
are injective for all $j\in\ZZ$ and $i\ge j+k.$
\end{definition}
For any normalizing sequence $\mathcal T$ we denote by $\langle \mathcal T\rangle$ the two-sided ideal generated by this sequence and by $\A/\langle \mathcal T\rangle$ the quotient $\ZZ$\!--algebra.
If $\A$ has an $l$\!--periodicity $\theta$ satisfying $\theta(t_j)=t_{j+l},$ then $\theta$ induces an $l$\!--periodicity of $\A/\langle\mathcal T\rangle.$

\subsection{Quantum projective planes}\label{subsec:quantum-planes}
We recall the quantum polynomial algebras in three variables and their basic properties.

For a $3\times3$ matrix $\vartheta=(\vartheta_{ij})$ with entries in $\kk^*,$ define the graded $\kk$-algebra
\begin{equation}\label{com}
 S_\vartheta=
 \kk\langle z_1,z_2,z_3\rangle/
 \langle\vartheta_{ij}z_i z_j-\vartheta_{ji}z_j z_i\mid1\le i<j\le3\rangle,
 \qquad \deg z_i=1.
\end{equation}
Write $\cS_\vartheta$ for its associated $\ZZ$\!--algebra.
Put
$
 \wt\vartheta_{ij}=\vartheta_{ij}\vartheta_{ji}^{-1}$ and $t(\vartheta)=\wt\vartheta_{12}\wt\vartheta_{23}\wt\vartheta_{31}.
$
The following properties are well known \cite[Sections 1, 6.1 and 7.2]{Rog}.
\begin{proposition}\label{prop:quantum-polynomial}
The monomials $z_1^{a_1}z_2^{a_2}z_3^{a_3},$ with $a_1,a_2,a_3\ge0,$ form a PBW basis of $S_\vartheta.$
The algebra $S_\vartheta$ is Koszul, noetherian on both sides, and AS-regular of dimension three and Gorenstein parameter three.
Its component dimensions and Hilbert series are
\[
 \dim_\kk(S_\vartheta)_m=\binom{m+2}{2},\qquad m\ge0,\qquad
 H_{S_\vartheta}(z)=(1-z)^{-3}.
\]
The quadratic dual $\widehat{\cS_\vartheta}$ is Frobenius of index three, with component dimensions
\[
 \dim_\kk\bigl(\widehat{\cS_\vartheta}\bigr)_i^{i+n}
 =\binom{3}{n},\qquad i\in\ZZ,\quad 0\le n\le3,
\]
and zero components for $n>3.$
\end{proposition}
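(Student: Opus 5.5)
The plan is to treat $S_\vartheta$ as a PBW algebra whose normal words are the ordered monomials, and to read off the remaining properties from that PBW structure. I order the generators $z_1<z_2<z_3$. Since every $\vartheta_{ij}$ is nonzero, each defining relation can be rewritten as $z_jz_i=\vartheta_{ji}^{-1}\vartheta_{ij}z_iz_j$ for $i<j$. So the admissible pairs are exactly those $(i_1,i_2)$ with $i_1\le i_2$, the normal words are the monomials $z_1^{a_1}z_2^{a_2}z_3^{a_3}$, and there are $6$ normal quadratic words in a space of dimension $9-3=6$.

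By Remark \ref{triplePBW}, to prove the PBW property it suffices to check degree three. I would use the diamond lemma for the single overlap $z_3z_2z_1$. Reducing it as $(z_3z_2)z_1$ and as $z_3(z_2z_1)$ gives the same scalar multiple of $z_1z_2z_3$, namely $\vartheta_{32}^{-1}\vartheta_{23}\vartheta_{31}^{-1}\vartheta_{13}\vartheta_{21}^{-1}\vartheta_{12}$ in both cases. This holds because the rewriting rules are diagonal skew-commutations. Hence the $10$ normal cubic words are linearly independent, and by Remark \ref{triplePBW} the ordered monomials form a PBW basis. A basis of ordered monomials in $3$ variables gives
\[
 \dim(S_\vartheta)_m=\tbinom{m+2}{2}\quad\text{and}\quad H_{S_\vartheta}=(1-z)^{-3}.
\]
Theorem \ref{PP} then gives Koszulity, and also that the dual is PBW.

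For the dual I would compute directly. $\widehat{\cS_\vartheta}$ has generators $\xi_1,\xi_2,\xi_3$ dual to the $z_i$, and its relation space is the annihilator of the three skew-commutators. This annihilator is $6$-dimensional and is spanned by
\[
 \xi_i^2\ (i=1,2,3)\quad\text{and}\quad \vartheta_{ji}\xi_i\xi_j+\vartheta_{ij}\xi_j\xi_i\ (i<j),
\]
up to the convention for ordering tensor factors. The result is a skew exterior algebra, and its PBW basis consists of the square-free ordered monomials, so
\[
 \dim\bigl(\widehat{\cS_\vartheta}\bigr)_i^{i+n}=\tbinom{3}{n},
\]
which is zero for $n>3$. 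The Frobenius pairings are then clear: the product of complementary square-free monomials is a nonzero multiple of $\xi_1\xi_2\xi_3$, and the product of non-complementary ones vanishes. So the pairing matrix in the monomial bases is monomial with nonzero entries, hence perfect, and $\widehat{\cS_\vartheta}$ is Frobenius of index $3$.

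The Koszul resolution of $\mS_i$ has length $3$, because the dual vanishes beyond degree $3$. So the global dimension is $3$, and Proposition \ref{prop:bp-frobenius} gives Gorenstein parameter $3$. Together with the quadratic growth of the Hilbert function, this is AS-regularity in the sense of Definition \ref{ASreg}. The left-module version follows by the symmetric argument, since the Frobenius condition is two-sided. On the graded side the same conclusion holds, because $\cS_\vartheta$ is the $\ZZ$-algebra of a graded algebra.

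The main obstacle is noetherianity, and I would prove it by iterated Ore extensions. The algebra $\kk[z_1]$ is noetherian. Then $\kk\langle z_1,z_2\rangle$, with $z_2z_1$ a scalar multiple of $z_1z_2$, is the Ore extension $\kk[z_1][z_2;\sigma]$, where $\sigma$ is the automorphism $z_1\mapsto cz_1$. Similarly $S_\vartheta=A[z_3;\tau]$, where $\tau$ is the diagonal automorphism of $A=\kk\langle z_1,z_2\rangle$ rescaling $z_1$ and $z_2$. The map $\tau$ respects the relation of $A$ because the relation is homogeneous in each variable separately. The skew Hilbert basis theorem then gives left and right noetherianity. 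The PBW basis confirms that $S_\vartheta$ really is this Ore extension with no further collapse.
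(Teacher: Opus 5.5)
Your proof is correct, but it takes a different route: the paper gives no argument here and simply quotes these properties as well known from \cite{Rog}. You instead derive everything from tools the paper sets up later for $\L$:
\begin{itemize}
\item the diamond lemma on the single overlap $z_3z_2z_1$ for the PBW basis and Hilbert series (this already gives the basis in all degrees, so Remark \ref{triplePBW} is not even needed);
\item Theorem \ref{PP} for Koszulness;
\item an explicit identification of $\widehat{\cS_\vartheta}$ as a skew exterior algebra, for the dimensions $\binom{3}{n}$ and the perfect pairings;
\item Proposition \ref{prop:bp-frobenius} for the Gorenstein condition;
\item the skew Hilbert basis theorem for the iterated Ore extension $\kk[z_1][z_2;\sigma][z_3;\tau]$ for noetherianity.
\end{itemize}
This is the same pattern the paper uses for $\L$: Koszul with a Frobenius dual gives AS-regularity in Theorem \ref{thm:regular}, and Ore extensions give noetherianity in Proposition \ref{prop:ore-subalgebra}.

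What your version buys is that the paper becomes self-contained at this point, and the quantum plane appears as the model case of the later argument. What the citation buys is brevity. A common alternative is to write down the Koszul complex $0\to S_\vartheta(-3)\to S_\vartheta(-2)^{3}\to S_\vartheta(-1)^{3}\to S_\vartheta\to\kk\to0$ directly and dualize it.

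One step should be tightened. A length-three Koszul resolution of each $\mS_i$ gives global dimension three for all modules only after a standard extra argument. You can apply $\mM\otimes(-)$ to the Koszul bimodule resolution, as in the proof of Theorem \ref{thm:regular}, or use noetherianity to reduce to finitely generated graded modules.
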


The algebra $S_\vartheta$ depends only on the ratios $\wt\vartheta_{ij}.$
If $\vartheta'_{ij}=\vartheta_{ij}m_i$ with $m_i\ne0,$ then
the maps $(\cS_{\vartheta'})_{a+1}^{a}\to(\cS_\vartheta)_{a+1}^{a},$ given by $z_i\mapsto m_i^a z_i$ for $a\in\ZZ,$ extend to an isomorphism $\cS_{\vartheta'}\to\cS_\vartheta.$
If $t(\vartheta')=t(\vartheta),$ one can choose $m_i$ so that
$\wt\vartheta'_{ij}=\wt\vartheta_{ij}m_i/m_j.$
Indeed, choose $m_1$ arbitrarily and solve the equations for $(i,j)=(1,2),(2,3),$ after which the equation for $(3,1)$ is their consequence.
Thus, the associated $\ZZ$\!--algebras are isomorphic whenever the values of $t$ agree.

\subsection{Helices and minifolds}
For background on DG categories and smooth proper DG algebras, see \cite[Sections 2--3 and 4.7]{Ke}; for exceptional collections and helices, see \cite[Section 1]{BP}.
A smooth proper noncommutative scheme $\mathsf X$ is a DG algebra whose diagonal bimodule is perfect and whose total cohomology is finite-dimensional.
We write $\mathcal D(\mathsf X)$ for the unbounded derived category of right DG modules over $\mathsf X.$ Its category of compact objects $\D=\operatorname{Perf}(\mathsf X)$ has a Serre functor $S,$ characterized by
$\Hom(E,F)^*\cong\Hom(F,SE).$
An exceptional collection $(E_1,\ldots,E_m)$ consists of objects with
$\Hom(E_i,E_i[r])=\kk$ for $r=0$ and zero otherwise, and
$\Hom(E_j,E_i[r])=0$ for $j>i.$
The collection is \emph{full} if its objects generate $\D$ under shifts, cones, and direct summands, and \emph{strong} if $\Hom(E_i,E_j[r])=0$ for every $i,j$ and $r\ne0.$
The left and right mutations of an exceptional pair $(E,F)$ are the pairs $(L_EF,E)$ and $(F,R_FE),$
where the objects $L_EF$ and $R_FE$ are defined by the triangles
\[
 L_EF\longrightarrow\mathbf R\Hom(E,F)\otimes E\longrightarrow F,
 \qquad
 E\longrightarrow\mathbf R\Hom(E,F)^*\otimes F\longrightarrow R_FE.
\]

A \emph{helix of period $m$} is a sequence $(E_i)_{i\in\ZZ}$ whose consecutive $m$ terms are exceptional collections and for which
\[
 E_{i-m}=L_{E_{i-m+1}}\cdots L_{E_{i-1}}E_i,\qquad
 E_{i+m}=R_{E_{i+m-1}}\cdots R_{E_{i+1}}E_i.
\]
It is full if one consecutive collection is full, and \emph{geometric} if
$\Hom(E_i,E_j[r])=0$ for $i\le j$ and $r\ne0.$
For a full helix, Serre duality gives \cite[Section 1]{BP}
\begin{equation}\label{eq:helix-serre}
 E_{i+m}\simeq S^{-1}(E_i)[m-1].
\end{equation}
The connected $\ZZ$\!--algebra associated with the helix $(E_i)_{i\in\ZZ}$ has components
$\A_i^j=\Hom(E_{-i},E_{-j})$ for $i\ge j$ and zero otherwise.
Multiplication is induced by composition in $\D.$
The following result combines \cite[Theorem 4.5]{BP} with \cite[Theorem 8.14 and Proposition 8.21]{ELO}.
\begin{theorem}\label{thm:geometric-tails}
Let $\A$ be a noetherian connected geometric $\ZZ$\!--algebra of period $d,$ and put $B=\bigoplus_{0\le i,j<d}\A_i^j.$
Write $\mP_i=e_i\A$ and $\barP_i=\pi(\mP_i),$ where $\pi:\Md\A\to\QGr\A$ is the exact quotient functor.
There are equivalences
\[
 \mathcal D(\QGr\A)\simeq\mathcal D(\Md B),\qquad
 \qprf\!\operatorname{-}\A\simeq\operatorname{Perf}(B).
\]
The objects $(\barP_{d-1},\ldots,\barP_0)$ form a full strong exceptional collection, and $(\barP_{-i})_{i\in\ZZ}$ is a geometric helix whose associated $\ZZ$\!--algebra is $\A.$
\end{theorem}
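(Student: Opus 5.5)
The plan is to realize $\QGr\A$ via the objects $\barP_i$ and reduce everything to a computation of $\Hom_{\QGr\A}(\barP_i,\barP_j[r])$ by local cohomology, then use the Koszul resolution of the simples to get generation and the helix structure.

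\emph{Step 1: local cohomology.} For $\mM\in\Md\A$ one has
\[
 \Hom_{\QGr\A}(\pi\mP_i,\pi\mM[r])=\varinjlim_{n}\Ext^r_{\A}\bigl(\mP_i^{\ge n},\mM\bigr),
\]
where $\mP_i^{\ge n}$ denotes the truncation of $\mP_i$. This limit fits into the usual exact sequence with the torsion functors $H^q_{\mathfrak m}=\varinjlim_n\Ext^q(\mP/\mP^{\ge n},-)$. The quotients $\mP_i/\mP_i^{\ge n}$ are finite-dimensional, so they have finite filtrations by the simples $\mS_k$.

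The Gorenstein condition says $\Ext^q_\A(\mS_k,\mP_j)$ vanishes unless $q=d$ and $j=k+d$. Inducting over these filtrations then gives:
\begin{itemize}
\item $H^q_{\mathfrak m}(\mP_j)=0$ for $q\neq d$;
\item $H^d_{\mathfrak m}(\mP_j)_i$ is the $\kk$-dual of the left module $(\A e_{j-d})$ in the appropriate degree. In particular it vanishes when $i>j-d$.
\end{itemize}
This is where noetherianity and the left-module version of the Gorenstein condition enter: the direct limit must be controlled and $\Ext$ must commute with it.

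\emph{Step 2: Hom and Ext between the $\barP_i$.} Since $d\ge 2$, Step~1 gives
\[
 \Hom(\barP_i,\barP_j)=\A_i^j,\qquad \Hom(\barP_i,\barP_j[r])\cong H^{r+1}_{\mathfrak m}(\mP_j)_i \ \text{ for } r\ge1.
\]
The latter can be nonzero only for $r=d-1$ and $i\le j-d$.

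Hence $\Hom(\barP_i,\barP_j[r])=0$ for all $r\neq0$ whenever $j-i<d$. Combined with $\A_i^i=\kk$ and $\A_i^j=0$ for $i<j$, this shows that $(\barP_{d-1},\dots,\barP_0)$ is a strong exceptional collection. The same vanishing for every pair $\barP_{-i},\barP_{-j}$ with $i\le j$ (that is, $-j\le -i$) gives the geometricity condition, and the identification $\Hom(\barP_{-i},\barP_{-j})=\A_{-i}^{-j}$ matches the associated $\ZZ$-algebra with $\A$, up to the reindexing built into the definition.

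\emph{Step 3: fullness and the helix.} Koszulness together with global dimension $d$ gives an exact resolution
\[
 0\to\mP_{i+d}\to \mP_{i+d-1}^{\oplus}\to\cdots\to\mP_{i+1}^{\oplus}\to\mP_i\to\mS_i\to0,
\]
whose first term has rank one by the Gorenstein condition. Since $\pi\mS_i=0$, applying $\pi$ shows that $\barP_{i+d}$, respectively $\barP_i$, lies in the triangulated hull of the $d$ neighbouring objects. By induction in both directions, $\barP_0,\dots,\barP_{d-1}$ generate all $\barP_j$.

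Every noetherian module is a quotient of sums of $\mP_j$'s, and the $\barP_j$ are compact. Therefore these objects generate $\mathcal D(\QGr\A)$, and tilting by $\bigoplus_{0\le i<d}\barP_i$, whose endomorphism algebra is $B$, gives both equivalences.

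Finally, the multiplicity spaces in the Koszul complex are $\widehat\A_i^{i+n}$, which are dual to $\Hom(\barP_{i+n},\barP_{i+d})$ by the Frobenius pairing of Proposition~\ref{prop:bp-frobenius}. So the image of the Koszul complex is exactly the iterated mutation triangle, which gives $\barP_{i+d}=L\cdots L\,\barP_i$ in the indexing of the helix.

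The main obstacle is Step~1: carrying out the $\chi$-type local cohomology argument for $\ZZ$-algebras rather than graded algebras, and in particular the passage of $\Ext$ through the direct limit, which uses the noetherian hypothesis.
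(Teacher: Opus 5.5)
The paper gives no proof of its own here: the statement is quoted from \cite[Theorem 4.5]{BP} together with \cite[Theorem 8.14 and Proposition 8.21]{ELO}. Your local-cohomology reconstruction is a reasonable independent route, and Steps 1 and 2 are sound. D\'evissage of the finite-dimensional modules $\mP_i/\mP_i^{\ge n}$ over simples needs only the right-module Gorenstein condition and global dimension $d$. It gives $\Hom(\barP_i,\barP_j)=\A_i^j$, compatibly with composition since the map is induced by $\pi$. It also gives $\Hom(\barP_i,\barP_j[r])=0$ for $r\ne0$ unless $r=d-1$ and $j-i\ge d$. This yields strongness, exceptionality, geometricity and the identification of the associated $\ZZ$\!--algebra. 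The reduction of generation to $\barP_0,\ldots,\barP_{d-1}$ via the Koszul resolution is also fine. Two points need repair.

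\emph{The helix step is wrong as written.} Since $\A$ is connected, $\Hom(\barP_{i+n},\barP_{i+d})=\A_{i+n}^{i+d}=0$ for $n<d$. But the multiplicity spaces $(\widehat\A_i^{i+n})^*$ of the Koszul complex are nonzero, already for $n=0$. The Frobenius pairing pairs $\widehat\A_i^{i+n}$ with $\widehat\A_{i+n}^{i+d}$, not with any component of $\A$. So it cannot match the Koszul complex with the mutation complex term by term; that would require describing the dual collection.

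You do not need such a matching. Brutal truncation of the exact complex $0\to\barP_{i+d}\to K_{d-1}\to\cdots\to K_1\to\barP_i\to0$ in $\QGr\A$ gives a triangle
\[
 N\longrightarrow\barP_i\longrightarrow\barP_{i+d}[d-1]\longrightarrow N[1],\qquad
 N\in\mathcal N=\langle\barP_{i+1},\ldots,\barP_{i+d-1}\rangle .
\]
Your Step 2 gives $\Hom(\barP_{i+k},\barP_{i+d}[r])=0$ for $0<k<d$ and all $r$. By uniqueness of the decomposition triangle with respect to $\mathcal N$ and $\mathcal N^{\perp}$, the object $\barP_{i+d}[d-1]$ is the projection of $\barP_i$ to $\mathcal N^{\perp}$. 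With the mutation triangles used in the paper, the $d-1$ successive single left mutations produce exactly this projection shifted by $[1-d]$. Hence $\barP_{i+d}\simeq L_{\barP_{i+d-1}}\cdots L_{\barP_{i+1}}\barP_i$. The right-mutation condition follows because $\barP_i\in{}^{\perp}\mathcal N$ by Step 2, and iterated left and right mutations through $\mathcal N$ are mutually inverse. The only Gorenstein input here is the rank-one last term of the Koszul resolution.

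\emph{Compactness of $\barP_j$ is asserted, not proved.} The tilting equivalence needs the $\barP_j$ to be compact in the unbounded category $\mathcal D(\QGr\A)$, and this does not follow formally from Step 1. One clean way to get it is to identify $\mathcal D(\QGr\A)$ with the Verdier quotient of $\mathcal D(\Md\A)$ by the localizing subcategory generated by the simples $\mS_j$. The $\mS_j$ are compact there, because their Koszul resolutions are finite complexes of finitely generated projectives. Neeman's localization theorem then shows that the quotient is compactly generated by the images $\barP_j$ of the compact generators $\mP_j$.
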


Let $\mathsf X$ be a smooth proper noncommutative scheme, and fix an integer $n\ge0.$
Put $\D=\operatorname{Perf}(\mathsf X)$ and $T=S^{-1}[n],$ where $S$ is the Serre functor of $\D.$
For an exceptional object $E\in\D,$ its anticanonical algebra is
\begin{equation}\label{eq:anticanonical-definition}
 \begin{gathered}
 R(\mathsf X,E)=\bigoplus_{m\ge0}\Hom(E,T^mE),\\
 f\star g=T^r(f)\circ g,\qquad f\in R(\mathsf X,E)_m,\quad g\in R(\mathsf X,E)_r.
 \end{gathered}
\end{equation}
For $R=R(\mathsf X,E),$ let $q_R:\mathcal D(\operatorname{Gr}R)\to\mathcal D(\QGr R)$ be the functor induced by the exact quotient functor $\operatorname{Gr}R\to\QGr R.$
Following \cite[Definitions 2.1 and 2.3]{LO}, we say that the scheme $\mathsf X$ is an $n$\!--dimensional \emph{noncommutative Fano variety} if there exists an exceptional object $E\in\operatorname{Perf}(\mathsf X)$ such that
\begin{equation}\label{eq:fano-vanishing}
 \Hom(E,T^mE[r])=0,\qquad m\ge0,\quad r\ne0,
\end{equation}
and there is an equivalence $\Phi_E$ with the following restriction to compact objects:
\begin{equation}\label{eq:fano-equivalence}
 \begin{aligned}
 \Phi_E:\mathcal D(\mathsf X)\xrightarrow{\sim}&\mathcal D(\QGr R),\qquad
 \Phi_E|_{\D}:\D=\operatorname{Perf}(\mathsf X)\xrightarrow{\sim}\qprf\!\operatorname{-}R,\\
 &F\longmapsto q_R\!\left(\bigoplus_{m\ge0}\mathbf R\Hom(E,T^mF)\right).
 \end{aligned}
\end{equation}
The noncommutative Fano variety $\mathsf X$ is \emph{of geometric type} if there is an exceptional object $E\in\operatorname{Perf}(\mathsf X)$ such that \eqref{eq:fano-vanishing} holds, an equivalence \eqref{eq:fano-equivalence} exists, and the dimensions of the graded components of $R(\mathsf X,E)$ are given by a polynomial in $m$ for all $m\ge0.$
We use \cite[Proposition 2.7]{LO}.
\begin{proposition}\label{prop:helix-fano}
Let $\mathsf X$ be a smooth proper noncommutative scheme which has a full geometric helix of period $n+1.$
Put $\D=\operatorname{Perf}(\mathsf X).$
Then $\mathsf X$ is an $n$\!--dimensional noncommutative Fano variety.
Every term of the helix can be taken as the exceptional object $E$ in the definition.
If $T_*$ acts unipotently on $K_0(\D)\otimes\QQ,$ then $\mathsf X$ is of geometric type.
\end{proposition}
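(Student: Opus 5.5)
The plan is to reduce everything to the $\ZZ$\!--algebra $\A$ of the helix and then pass to a Veronese subalgebra. Let $(E_i)_{i\in\ZZ}$ be the full geometric helix of period $n+1$, and take $E=E_0$; any other term works the same way after reindexing. By \eqref{eq:helix-serre} we have $E_{i+n+1}\simeq S^{-1}(E_i)[n]=T(E_i)$. Hence $T^mE\simeq E_{m(n+1)}$ for $m\ge0$, and $T$ acts on the helix as a shift by $n+1$.

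First, the vanishing \eqref{eq:fano-vanishing} is immediate. For $m\ge0$ we have
\[
\Hom(E,T^mE[r])=\Hom(E_0,E_{m(n+1)}[r]),
\]
and this vanishes for $r\ne0$ because the helix is geometric.

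Second, I identify $R=R(\mathsf X,E)$ with a Veronese subalgebra. Let $\A$ be the connected $\ZZ$\!--algebra of the helix, with $\A_i^j=\Hom(E_{-i},E_{-j})$. The functor $T$ carries $\Hom(E_{-i},E_{-j})$ isomorphically onto $\Hom(E_{-i+n+1},E_{-j+n+1})$. It therefore defines a periodicity of degree $n+1$ on $\A$. Under this periodicity, the product $f\star g=T^r(f)\circ g$ is exactly composition in $\A$. So the $\ZZ$\!--algebra associated with the graded algebra $R$ is isomorphic to the Veronese $\A_J$ with $J=(n+1)\ZZ$. Proposition~\ref{prop:veronese-tails} then gives $\QGr\A\simeq\QGr\A_J=\QGr R$. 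This step needs $\A$ to be generated in degree one. I would get that from the standard fact that a geometric helix is built from successive mutations (as in \cite{BP}), so that its algebra is Koszul of global dimension $n+1$ and Gorenstein with that parameter.

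Third, and this is the main obstacle, I need the equivalence
\[
\mathcal D(\mathsf X)\simeq\mathcal D(\QGr\A)
\]
sending $F$ to the image of $\bigoplus_i\mathbf R\Hom(E_{-i},F)$. Once that is established, composing with the previous step and restricting to the $J$\!--indexed terms gives $\Phi_E$ in the form \eqref{eq:fano-equivalence}. Restricting to compact objects gives $\operatorname{Perf}(\mathsf X)\simeq\qprf\!\operatorname{-}R$.

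Theorem~\ref{thm:geometric-tails} provides such an equivalence, but it assumes $\A$ is noetherian, which is not given here. So I would argue directly along the lines of \cite[Theorem 4.5]{BP}. The functor sends each $E_{-i}$ to $\pi(\mP_i)$, using that the Koszul resolutions of the simples $\mS_i$ have length $n+1$. It is fully faithful on $E_{-i}$ for $0\le i\le n$ because of the geometric vanishing, which holds for all $i\le j$. It is essentially surjective because $\QGr\A$ is generated by the $\pi(\mP_i)$, and any $n+1$ consecutive ones generate, as the Koszul complexes show. Since $(E_0,\dots,E_{-n})$ is full and the functor commutes with coproducts, it is an equivalence of unbounded categories. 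It preserves compactness because the objects $\pi(\mP_i)$ are compact in $\mathcal D(\QGr\A)$. The delicate point is exactly this compactness of $\pi(\mP_i)$, that is, finiteness of the local cohomology, without a noetherian hypothesis. I would try to derive it from the Gorenstein condition: it yields $\mathbf R\Hom(\mS_i,\mP_j)$ concentrated in one degree, so torsion-completion is controlled by finitely many terms.

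Finally, suppose $T_*$ is unipotent on $K_0(\D)\otimes\QQ$. By the vanishing step, $\dim R_m=\chi([E],T_*^m[E])$. Write $T_*=\exp(N)$ with $N$ nilpotent. Then $T_*^m=\sum_k m^kN^k/k!$ is a polynomial in $m$, and so is $\chi([E],T_*^m[E])$. This shows that $\mathsf X$ is of geometric type.
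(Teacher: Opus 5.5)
The paper does not prove this proposition; it imports it from \cite[Proposition 2.7]{LO}, so your argument has to stand on its own. Three of your steps are fine: the vanishing \eqref{eq:fano-vanishing}, the identification of $R(\mathsf X,E_0)$ with the Veronese $\A_{(n+1)\ZZ}$ via the periodicity induced by $T$, and the polynomiality of $\dim R_m=\chi([E],T_*^m[E])$.

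\textbf{The gap is in your third step}, which is the real content of the proposition. Write $G(F)_i=\mathbf R\Hom(E_{-i},F)$ and $\Phi=\pi\circ G$.

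First, geometricity only shows that $G(E_{-j})$ agrees with $\mP_j$ in all indices $\ge j$, so that $\Phi(E_{-j})\simeq\pi\mP_j$. It says nothing about $\Hom_{\mathcal D(\QGr\A)}(\pi\mP_i,\pi\mP_j[k])$. That group involves the local cohomology of $\A$, so your claim that full faithfulness on the generators ``follows from the geometric vanishing'' is unjustified.

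Second, your extension from the generators to all of $\mathcal D(\mathsf X)$ needs the objects $\pi\mP_i$ to be compact. You leave that open, with only a vague hope that the Gorenstein condition controls it. Since you cannot quote Theorem~\ref{thm:geometric-tails} without a noetherian hypothesis, the equivalence $\Phi_E$ is not established as written.

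\textbf{How to close it without noetherianity.} Use the Gorenstein condition with parameter $n+1$ to show that $G(F)$ is already torsion-local, rather than to bound local cohomology.

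Fix $i$ and let $i-n\le l\le i$. The Yoneda map $\mP_l\to G(E_{-l})$ is an isomorphism in all indices $\ge i$. The Koszul resolution of $\mS_i$ involves only $\mP_i,\ldots,\mP_{i+n+1}$. Hence $\mathbf R\Hom_\A(\mS_i,G(E_{-l}))\cong\mathbf R\Hom_\A(\mS_i,\mP_l)=0$, because $l\ne i+n+1$.

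Three facts now spread this vanishing to every $F$:
\begin{itemize}
\item the objects $E_{-l}$ with $i-n\le l\le i$ form a full exceptional collection;
\item $G$ commutes with sums;
\item $\mS_i$ is perfect.
\end{itemize}
So $\mathbf R\Hom_\A(\mS_i,G(F))=0$ for all $F$ and $i$; equivalently, $\mS_i\otimes^{\mathbf L}_\A\bigoplus_l E_{-l}=0$.

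Thus $G(F)$ is right orthogonal to torsion, and for $0\le j\le n$ and every $F$,
\[
\Hom(\pi\mP_j,\Phi(F)[k])\cong H^k(G(F))_j=\Hom(E_{-j},F[k]).
\]
This is full faithfulness with a generator in the first slot and an arbitrary object in the second. It extends to arbitrary first arguments because $\Hom(\bigoplus_\alpha F_\alpha,-)=\prod_\alpha\Hom(F_\alpha,-)$ and $\Phi$ preserves sums.

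Your Koszul-complex argument then gives essential surjectivity. Compactness of $\pi\mP_i$ comes out as a consequence of the equivalence rather than being an input.
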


Following \cite[Definition 2.6]{LO}, we use the following definition.
\begin{definition}\label{def:minifold}
An $n$\!--dimensional \emph{minifold} is a smooth proper noncommutative scheme $\mathsf X$ such that the category $\D=\operatorname{Perf}(\mathsf X)$ has a full exceptional collection of length $n+1$ generating a geometric helix and $T_*$ acts maximally unipotently on $K_0(\D)\otimes\QQ.$
\end{definition}
Maximal unipotence means that $T_*-I$ has a unique nilpotent Jordan block of size $n+1.$
For a full exceptional collection $\mathcal E$ in $\operatorname{Perf}(\mathsf X),$ let
$A_{\mathcal E}=(\chi(E_i,E_j))$ be its Euler matrix.
Serre duality gives
\begin{equation}\label{eq:serre-euler-matrix}
 S_*=A_{\mathcal E}^{-1}A_{\mathcal E}^{t},\qquad
 T_*=(-1)^n A_{\mathcal E}^{-t}A_{\mathcal E}.
\end{equation}
For a three-dimensional minifold $\mathsf X,$ choose $\mathcal E$ to generate a geometric helix.
The \emph{Euler degree of $\mathsf X$} is the number $D(\mathsf X)=D(A_{\mathcal E})$ defined by
\begin{equation}\label{eq:euler-degree-definition}
 \operatorname{adj}(A_{\mathcal E}+A_{\mathcal E}^t)
   =D(A_{\mathcal E})\,vv^t,
\end{equation}
where $v$ is a primitive generator of the rank-one kernel of $A_{\mathcal E}+A_{\mathcal E}^t$ over $\ZZ,$ as in \cite[Section 3.3, formula (3.15)]{LO}.
The number $D(\mathsf X)$ is independent of the chosen collection $\mathcal E$ \cite[Proposition 3.11]{LO}.

The Euler matrices of full exceptional collections generating geometric helices on three-dimensional minifolds form five mutation classes, distinguished by their Euler degrees $22,40,54,64,72$ \cite[Theorems I and II]{LO}.
The first four classes are realized by $X_{22},\; V_5,\; Q^3,$ and $\mathbb P^3,$ respectively.
The realization of the fifth class by the noncommutative family $\mathsf M_{72}$ was announced in \cite{Orl26}.
In this article, we give a detailed construction of this family.

\section{The potential and the algebra \texorpdfstring{$\L$}{L}}
\label{sec:construction}
In this section, we construct the quadratic $\ZZ$\!--algebra $\L=\L(p,q)$ from the potential $\Phi_{p,q}$ and define its section algebras.
From now on, $\kk$ is an algebraically closed field of characteristic zero. Let~$p,q\in\kk^*.$
\subsection{Potential}
We recall the formula for the potential from \cite[Section 7.2, formula (7.4)]{LO}.
Let $V,\wb V,W,\wb W$ be three-dimensional vector spaces, and put $U=\operatorname{Sym}^2W$ and $\wb U=\operatorname{Sym}^2\wb W.$
Fix bases $x_i,\br x_i$ of $V,\wb V$ and bases $u^{ij},\br u^{ij}$ of $U,\wb U,$ with $u^{ij}=u^{ji}$ and $\br u^{ij}=\br u^{ji}.$
In $V\otimes U$ put
\begin{equation}\label{commut1}
\begin{aligned}
 \rr_1&=x_1u^{33}-p x_3u^{13},& \rr_2&=x_1u^{22}-q x_2u^{12},\\
 \rr_3&=x_2u^{33}-q x_3u^{23},& \rr_4&=x_2u^{11}-p x_1u^{12},\\
 \rr_5&=x_3u^{22}-p x_2u^{23},& \rr_6&=x_3u^{11}-q x_1u^{13},
\end{aligned}
\end{equation}
and define
\begin{equation}\label{trigon1}
 \rr_7=x_1u^{23}+x_2u^{13}+x_3u^{12}.
\end{equation}
Now we define the potential $\mathsf{\Phi}=\Phi_{p,q}\in V\otimes U\otimes\wb V\otimes\wb U$ by the following rule
\begin{equation}\label{eq:potential}
 \mathsf{\Phi}=\sum_{k=1}^3\bigl(\rr_{2k-1}\otimes\br{\rr}_{2k}
               +\rr_{2k}\otimes\br{\rr}_{2k-1}\bigr)
               +pq\,\rr_7\otimes\br{\rr}_7,
\end{equation}
where a bar on a tensor replaces each basis vector by its barred copy.
\begin{remark}\label{scalepot}
{\rm
Multiplying the basis vectors $u^{ij}$ and $\br u^{ij}$ with $i\ne j$ by $a\in\kk^*$ and leaving $u^{ii}$ and $\br u^{ii}$ fixed takes $\Phi_{p,q}$ to $\Phi_{ap,aq}.$
Thus, its isomorphism class depends only on $p/q,$ as in \cite[Remark 7.1]{LO}.
}
\end{remark}
The fixed identifications of barred and unbarred spaces define an involution on $V\otimes U\otimes\wb V\otimes\wb U$ by interchanging the first tensor factor with the third and the second with the fourth.
\begin{lemma}\label{inv}
The potential $\mathsf{\Phi}$ is invariant under this involution.
\end{lemma}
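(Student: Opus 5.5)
The plan is to check the invariance term by term. The involution, call it $\sigma$, swaps the factors $V\leftrightarrow\wb V$ and $U\leftrightarrow\wb U$ using the fixed identifications $x_i\leftrightarrow\br x_i$ and $u^{ij}\leftrightarrow\br u^{ij}.$ On a decomposable tensor it acts by
\[
\sigma(a\otimes b\otimes\br c\otimes\br d)=c\otimes d\otimes\br a\otimes\br b,\qquad a,c\in V,\ b,d\in U.
\]
Every element of $V\otimes U$ is a sum of such $a\otimes b.$ By bilinearity, for any $\rr,\rs\in V\otimes U$ we therefore get
\[
\sigma(\rr\otimes\br\rs)=\rs\otimes\br\rr.
\]
This identity is the only ingredient needed. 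It is immediate, since the bar operation is linear and matches the bases.

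Next, apply $\sigma$ to each summand of \eqref{eq:potential}. For fixed $k,$ the pair $\rr_{2k-1}\otimes\br\rr_{2k}+\rr_{2k}\otimes\br\rr_{2k-1}$ is sent to $\rr_{2k}\otimes\br\rr_{2k-1}+\rr_{2k-1}\otimes\br\rr_{2k}.$ This is the same sum with its two terms interchanged, so each such pair is fixed. The last term $pq\,\rr_7\otimes\br\rr_7$ is fixed outright by the identity above. Summing over all terms gives $\sigma(\mathsf\Phi)=\mathsf\Phi.$

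The coefficients $p,q$ appearing inside the $\rr_i$ cause no trouble. The bar operation replaces only basis vectors and leaves scalars unchanged, so $\br\rr_i$ is the literal barred copy of $\rr_i$ with the same coefficients, and the identity $\sigma(\rr\otimes\br\rs)=\rs\otimes\br\rr$ holds for arbitrary coefficients. The only point needing care is to confirm that $\sigma$ is the linear map induced by the fixed identifications, with no scalar twist. That is exactly how it is defined in the statement, so I expect no real obstacle.
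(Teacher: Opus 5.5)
Your proof is correct and is essentially the paper's argument. The paper notes that each parenthesized summand and $\rr_7\otimes\br{\rr}_7$ is invariant under interchanging the two factors. That is exactly your identity $\sigma(\rr\otimes\br{\rs})=\rs\otimes\br{\rr}$ applied termwise; you only make it explicit.
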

\begin{proof}
Each summand in parentheses in \eqref{eq:potential} is invariant under interchanging its two factors, as is $\rr_7\otimes\br{\rr}_7.$
\end{proof}

Contracting the potential $\mathsf\Phi$ in one tensor factor with a linear functional and cyclically permuting the remaining factors defines the maps
\begin{equation}\label{firstder}
\begin{aligned}
\nabla_{\wb U}:\wb U^*\to V\otimes U\otimes\wb V,&\qquad
\nabla_{\wb V}:\wb V^*\to\wb U\otimes V\otimes U,\\
\nabla_U:U^*\to\wb V\otimes\wb U\otimes V,&\qquad
\nabla_V:V^*\to U\otimes\wb V\otimes\wb U.
\end{aligned}
\end{equation}
\begin{lemma}\label{lem:first-contractions}
The four maps in \eqref{firstder} are injective.
\end{lemma}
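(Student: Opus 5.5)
By Lemma~\ref{inv}, the involution exchanging $V\leftrightarrow\wb V$ and $U\leftrightarrow\wb U$ fixes $\mathsf\Phi.$ Under the fixed identifications it therefore carries $\nabla_V$ to $\nabla_{\wb V}$ and $\nabla_U$ to $\nabla_{\wb U},$ up to the cyclic reordering of the target factors. So it suffices to prove injectivity of $\nabla_V$ and $\nabla_{\wb U}$, or equivalently of $\nabla_U$. The plan is to reduce both cases to the linear independence of the seven tensors $\rr_1,\ldots,\rr_7$ in $V\otimes U$, together with an elementary spanning statement.

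\emph{Step 1: independence of the $\rr_k$.} I check that $\rr_1,\ldots,\rr_7$ are linearly independent in $V\otimes U$ by looking at monomials. The monomials $x_1u^{33},x_1u^{22},x_2u^{33},x_2u^{11},x_3u^{22},x_3u^{11}$ occur only in $\rr_1,\ldots,\rr_6$ respectively, and $x_1u^{23}$ occurs only in $\rr_7.$ The same holds for the barred copies $\br\rr_k$ in $\wb V\otimes\wb U.$

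Write $\mathsf\Phi=\sum_{k}c_k\,\rr_k\otimes\br\rr_{\sigma(k)},$ where $\sigma$ is the involution $(12)(34)(56)(7)$ and $c_k\in\{1,pq\}$ are nonzero.

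\emph{Step 2: $\nabla_V$.} For $\eta\in V^*$ we have, up to reordering factors,
\[
\nabla_V(\eta)=\sum_k c_k\,(\eta\otimes\mathrm{id})(\rr_k)\otimes\br\rr_{\sigma(k)}.
\]
Since the $\br\rr_j$ are independent, $\nabla_V(\eta)=0$ forces $(\eta\otimes\mathrm{id})(\rr_k)=0$ in $U$ for every $k.$ Write $\eta=a x_1^*+b x_2^*+c x_3^*.$ Then:
\begin{itemize}
\item $\rr_1$ gives $a u^{33}-pc\,u^{13}=0$, hence $a=c=0$;
\item $\rr_3$ gives $b u^{33}-qc\,u^{23}=0$, hence $b=0$.
\end{itemize}
So $\eta=0.$

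\emph{Step 3: $\nabla_{\wb U}$.} For $\xi\in\wb U^*,$ the vector $\nabla_{\wb U}(\xi)$ equals $\sum_k c_k\,\rr_k\otimes(\mathrm{id}\otimes\xi)(\br\rr_{\sigma(k)})$ with values in $\wb V$. Independence of the $\rr_k$ forces $(\mathrm{id}\otimes\xi)(\br\rr_j)=0$ for all $j.$ Equivalently, $\xi$ kills every $\wb U$-coefficient of every $\bar x_i$ in every $\br\rr_j.$ These coefficients include $\br u^{33}$ (from $\br\rr_1$), $\br u^{22}$ (from $\br\rr_2$), $\br u^{11}$ (from $\br\rr_4$), and $\br u^{23},\br u^{13},\br u^{12}$ (from $\br\rr_7$). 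They span $\wb U$, so $\xi=0.$ The case of $\nabla_U$ follows either by the symmetric argument or by the involution.

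There is no real obstacle. The only care needed is bookkeeping: I must ensure that the contraction in the definition \eqref{firstder} followed by cyclic permutation is compatible with the involution, so that Lemma~\ref{inv} genuinely transfers injectivity between the barred and unbarred maps. If that compatibility looks delicate, I will simply repeat Steps~2 and~3 verbatim for $\nabla_{\wb V}$ and $\nabla_U$ using the barred relations.
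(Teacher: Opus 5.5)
Your proof is correct, but it follows a different route from the paper's. The paper uses the $\ZZ^3$\nobreakdash-multidegree in which $x_i,\bar x_i$ have weight $\epsilon_i$ and $u^{ij},\bar u^{ij}$ have weight $\epsilon_i+\epsilon_j$. Since $\mathsf\Phi$ is homogeneous of weight $(2,2,2)$, the images of distinct basis vectors of $V^*$ (resp.\ $U^*$, $\wb V^*$, $\wb U^*$) lie in distinct weight spaces. Injectivity then reduces to the nonvanishing of each image, which the paper reads off from \eqref{eq:potential}.

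You instead use the factorization $\mathsf\Phi=\sum_k c_k\,\rr_k\otimes\br\rr_{\sigma(k)}$, with both families $\{\rr_k\}$ and $\{\br\rr_k\}$ linearly independent. This is the same structure the paper exploits only later, in Lemma \ref{7-dim}. It reduces injectivity to the statement that no nonzero functional on $V$ or $U$ kills the corresponding slot of every $\rr_k$, which you check directly.

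Your argument needs no homogeneity. It would work for any potential of the form $\sum c_{kl}\,\rr_k\otimes\br\rr_l$ with invertible coefficient matrix. It also settles the nonvanishing step rigorously: independence of the $\br\rr_j$ excludes the cancellations that the paper's phrase ``nonzero by \eqref{eq:potential}'' leaves to the reader. The paper's argument is shorter and treats all four maps uniformly without appealing to the involution.

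Your use of Lemma \ref{inv} is sound. In fact $\nabla_{\wb V}$ is exactly the bar-swap conjugate of $\nabla_V$, and $\nabla_U$ that of $\nabla_{\wb U}$, with no extra cyclic reordering. Your fallback of repeating Steps 2 and 3 with the roles of $\rr_k$ and $\br\rr_k$ exchanged works equally well.
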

\begin{proof}
Let $\epsilon_1,\epsilon_2,\epsilon_3$ be the standard basis of $\ZZ^3.$
Assign to $x_i,\br x_i$ multidegree $\epsilon_i$ and $u^{ij},\br u^{ij}$ multidegree $\epsilon_i+\epsilon_j.$
The potential $\mathsf\Phi$ is homogeneous of multidegree $(2,2,2).$
Expansion with respect to the chosen basis of $V$ gives
\[
 \mathsf\Phi=\sum_{i=1}^3x_i\otimes\nabla_V(x_i^*).
\]
The tensors $\nabla_V(x_i^*)$ in this expansion are nonzero by \eqref{eq:potential}.
The expansions with respect to the chosen bases of $U,$ $\wb V,$ and $\wb U$ also have nonzero coefficients.
The tensors $\nabla_V(x_i^*)$ and $\nabla_{\wb V}(\br x_i^*)$ have multidegree $(2,2,2)-\epsilon_i.$
The tensors $\nabla_U((u^{ij})^*)$ and $\nabla_{\wb U}((\br u^{ij})^*)$ have multidegree $(2,2,2)-\epsilon_i-\epsilon_j,$ with $i\le j.$
For each map in \eqref{firstder}, the images of the chosen basis of its domain have pairwise distinct multidegrees and are therefore linearly independent.
\end{proof}

Contraction of $\mathsf\Phi$ in two cyclically adjacent tensor factors defines
\begin{equation}\label{secondder}
\begin{aligned}
\Delta_{\wb V,\wb U}:\wb V^*\otimes\wb U^*\to V\otimes U,&\qquad
\Delta_{\wb U,V}:\wb U^*\otimes V^*\to U\otimes\wb V,\\
\Delta_{V,U}:V^*\otimes U^*\to\wb V\otimes\wb U,&\qquad
\Delta_{U,\wb V}:U^*\otimes\wb V^*\to\wb U\otimes V.
\end{aligned}
\end{equation}
In $U\otimes\wb V$ put
\begin{equation}\label{commut2}
\begin{aligned}
 \rs_1&=u^{33}\br x_1-q u^{13}\br x_3,&
 \rs_2&=u^{22}\br x_1-p u^{12}\br x_2,\\
 \rs_3&=u^{33}\br x_2-p u^{23}\br x_3,&
 \rs_4&=u^{11}\br x_2-q u^{12}\br x_1,\\
 \rs_5&=u^{22}\br x_3-q u^{23}\br x_2,&
 \rs_6&=u^{11}\br x_3-p u^{13}\br x_1,
\end{aligned}
\end{equation}
and define the seventh tensor by
\begin{equation}\label{trigon2}
 \rs_7=u^{23}\br x_1+u^{13}\br x_2+u^{12}\br x_3.
\end{equation}
Here $\br{\rs}_i\in\wb U\otimes V$ is obtained by interchanging barred and unbarred generators.
Let $c:V\otimes U\otimes\wb V\otimes\wb U\xrightarrow{\sim}U\otimes\wb V\otimes\wb U\otimes V$ be the cyclic permutation which moves the first tensor factor to the end.
Then \eqref{eq:potential} gives
\begin{equation}\label{eq:cyclic-potential}
 c(\mathsf{\Phi})=\sum_{k=1}^3\bigl(\rs_{2k-1}\otimes\br{\rs}_{2k}
                +\rs_{2k}\otimes\br{\rs}_{2k-1}\bigr)
                +pq\,\rs_7\otimes\br{\rs}_7.
\end{equation}
\begin{lemma}\label{7-dim}
The images of the four maps in \eqref{secondder} have dimension seven.
They are spanned, respectively, by the tensors $\rr_1,\ldots,\rr_7,$ $\rs_1,\ldots,\rs_7,$ $\br{\rr}_1,\ldots,\br{\rr}_7,$ and $\br{\rs}_1,\ldots,\br{\rs}_7.$
\end{lemma}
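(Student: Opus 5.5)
We argue directly from the explicit forms \eqref{eq:potential} and \eqref{eq:cyclic-potential} of $\mathsf\Phi$ and $c(\mathsf\Phi)$, together with the symmetry of Lemma \ref{inv}. The map $\Delta_{\wb V,\wb U}$ contracts the last two factors of $\mathsf\Phi\in V\otimes U\otimes\wb V\otimes\wb U.$ Writing $\mathsf\Phi=\sum_\alpha \rr_\alpha\otimes\br\rr_{\sigma(\alpha)}$ with the coefficients of \eqref{eq:potential}, where $\sigma$ swaps $2k-1\leftrightarrow 2k$ and fixes $7$, the image is the span of the $\rr_\alpha$ weighted by the functionals $\langle\xi,\br\rr_{\sigma(\alpha)}\rangle.$ Hence the image equals $\operatorname{span}(\rr_1,\ldots,\rr_7)$ as soon as the tensors $\br\rr_1,\ldots,\br\rr_7\in\wb V\otimes\wb U$ are linearly independent. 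Indeed, then one can choose functionals $\xi_\beta\in(\wb V\otimes\wb U)^*$ dual to them, and $\Delta(\xi_{\sigma(\alpha)})$ is a nonzero multiple of $\rr_\alpha$ (the coefficient is $1$ or $pq\neq0$). The inclusion of the image in the span is immediate from the formula.

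So the key step is the linear independence of $\rr_1,\ldots,\rr_7$ in $V\otimes U$; by the bar symmetry, the barred version follows. We again use the $\ZZ^3$ multidegree from the proof of Lemma \ref{lem:first-contractions}. The seven tensors have multidegrees
\begin{itemize}
\item $\rr_1$: $\epsilon_1+2\epsilon_3$;
\item $\rr_2$: $\epsilon_1+2\epsilon_2$;
\item $\rr_3$: $\epsilon_2+2\epsilon_3$;
\item $\rr_4$: $2\epsilon_1+\epsilon_2$;
\item $\rr_5$: $\epsilon_3+2\epsilon_2$;
\item $\rr_6$: $2\epsilon_1+\epsilon_3$;
\item $\rr_7$: $(1,1,1)$.
\end{itemize}
These are pairwise distinct and each tensor is nonzero, so they are linearly independent. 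This gives the claim for $\Delta_{\wb V,\wb U}$, with image of dimension seven.

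For $\Delta_{U,\wb V}$ we apply Lemma \ref{inv}: the involution exchanges $(V,U)$ with $(\wb V,\wb U)$ and fixes $\mathsf\Phi$. Contraction in the first two factors therefore has image $\operatorname{span}(\br\rr_1,\ldots,\br\rr_7)\subset\wb V\otimes\wb U$. Read cyclically, this is the map contracting $U^*\otimes V^*$, i.e. the third map in the list, with image spanned by the $\br\rr_i$. Similarly, the maps $\Delta_{\wb U,V}$ and $\Delta_{U,\wb V}$ are handled via $c(\mathsf\Phi)$ in \eqref{eq:cyclic-potential}, which has the same shape with the $\rs_i$ in place of the $\rr_i$. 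This reduces everything to the linear independence of $\rs_1,\ldots,\rs_7\in U\otimes\wb V$. The same multidegree check applies: the multidegrees of the $\rs_i$ coincide with those of the $\rr_i$.

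The only step needing care is the bookkeeping. One must verify that \eqref{eq:cyclic-potential} really follows from \eqref{eq:potential}, i.e. that the $\rs_i$ are obtained correctly by regrouping the terms of $c(\mathsf\Phi)$. One must also match each cyclic contraction in \eqref{secondder} with the correct pair of factors. We expect the regrouping identity to be the main obstacle. It is a finite comparison of coefficients, organized by multidegree: each multidegree component of $\mathsf\Phi$ involves only a few monomials, and one checks that both sides agree componentwise.
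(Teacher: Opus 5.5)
Your proposal is correct and follows the paper's proof. The paper likewise uses linear independence of the $\rr_i$ and $\rs_i$ (your distinct multidegrees are the same as its disjoint monomial supports), the nondegenerate coefficient pairing $\operatorname{diag}(J,J,J,pq)$ in \eqref{eq:potential} and \eqref{eq:cyclic-potential}, and Lemma \ref{inv} for the remaining maps. Two minor points: the map you treat via the involution is $\Delta_{V,U}$, not $\Delta_{U,\wb V}$; and \eqref{eq:cyclic-potential} is stated in the paper before the lemma and checks term by term, so you may use it directly rather than treat it as an obstacle.
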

\begin{proof}
The tensors $\rr_1,\ldots,\rr_7$ have pairwise disjoint monomial supports and are therefore linearly independent.
The same argument applies to $\rs_1,\ldots,\rs_7,$ to $\br{\rr}_1,\ldots,\br{\rr}_7,$ and to $\br{\rs}_1,\ldots,\br{\rs}_7.$
The $7\times7$ matrix whose entry in row $i$ and column $j$ is the coefficient of $\rr_i\otimes\br{\rr}_j$ in \eqref{eq:potential} is $\operatorname{diag}(J,J,J,pq),$ where $J=\left(\begin{smallmatrix}0&1\\1&0\end{smallmatrix}\right).$
Equation \eqref{eq:cyclic-potential} gives the same matrix for the coefficients of $\rs_i\otimes\br{\rs}_j$ in $c(\mathsf\Phi).$
Its determinant is $-pq\ne0.$
Hence, the maps $\Delta_{\wb V,\wb U}$ and $\Delta_{\wb U,V}$ have rank seven.
By Lemma \ref{inv}, the maps $\Delta_{V,U}$ and $\Delta_{U,\wb V}$ also have rank seven.
\end{proof}

\subsection{The defining \texorpdfstring{$\ZZ$\!--algebra}{Z-algebra}}\label{subsec:defining-algebra}
We use the images of the four maps in \eqref{secondder} as the quadratic relation spaces in the definition of $\L.$

\begin{definition}
We define a connected quadratic $\ZZ$\!--algebra $\L=\bigoplus_{i, j\in \ZZ}\L_{i}^{j}$ as follows
\begin{equation}\label{defalg}
\L_{j+1}^{j}=
\begin{cases}
V & \text{if \;} j\equiv 0\, (\mod\; 4),\\
U & \text{if \;}  j\equiv 1\, (\mod\; 4),\\
\wb{V} & \text{if \;}  j\equiv 2\, (\mod\; 4),\\
\wb{U} & \text{if \;}  j\equiv 3\, (\mod\; 4),
\end{cases}
\qquad\text{with relations}
\quad
\R_{j+1}^{j-1}=
\begin{cases}
\Im\Delta_{U,\wb{V}} & \text{if \;}  j\equiv 0\, (\mod\; 4),\\
\Im\Delta_{\wb{V},\wb{U}}& \text{if \;}  j\equiv 1\, (\mod\; 4),\\
\Im\Delta_{\wb{U},V} & \text{if \;}  j\equiv 2\, (\mod\; 4),\\
\Im\Delta_{V,U}& \text{if \;}  j\equiv 3\, (\mod\; 4).
\end{cases}
\end{equation}
\end{definition}

By Lemma \ref{inv}, the identifications of barred and unbarred generators define a periodicity of degree $2$ with component maps $\theta:\L_i^j\xrightarrow{\sim}\L_{i+2}^{j+2}$ for all $i,j\in\ZZ.$
Define the graded algebra
\begin{equation}\label{eq:section-algebra}
 {\widetilde R}=\bigoplus_{m\ge0}{\widetilde R}_m,\qquad {\widetilde R}_m=\L_0^{-2m},\qquad
 a\star b=\theta^{-n}(a)b,\qquad a\in {\widetilde R}_m,\ b\in {\widetilde R}_n.
\end{equation}
The identity $\theta^{-n}(\L_0^{-2m})=\L_{-2n}^{-2m-2n}$ makes this product well-defined, and its associativity follows from that of $\L.$
We write $R={\widetilde R}^{(2)},$ with $R_m={\widetilde R}_{2m}.$

In computations we use $\theta$ to identify the generator spaces at indices of the same parity and suppress bars when no confusion is possible.
A word in the chosen generators of $\L$ always denotes a composable path, with its position determined by its endpoints.

\section{The quadratic dual and Frobenius pairings}\label{sec:dual}
In this section, we compute the quadratic dual $\widehat\L$ of $\L,$ deduce Koszulness of $\L,$ and prove that $\widehat\L$ is Frobenius of index four.

\subsection{Generators and relations}

Let $\L=\L(p,q)$ be the quadratic $\ZZ$\!--algebra defined in \eqref{defalg}, with quadratic relation spaces $\R_{i+2}^{i}.$
We denote its quadratic dual by $\widehat\L.$
Following the definition in Section \ref{subsec:quadratic-duality}, the generator spaces and quadratic relation spaces of $\widehat\L$ are
\[
 \widehat\L_i^{i+1}=(\L_{i+1}^{i})^*,\qquad
 \widehat\R_i^{i+2}=(\R_{i+2}^{i})^\perp
 \subseteq(\L_{i+2}^{i+1})^*\otimes(\L_{i+1}^{i})^*.
\]
Thus, the generator space $\widehat\L_i^{i+1}$ is $V^*,U^*,\wb V^*,\wb U^*$ for $i\equiv0,1,2,3\pmod4,$ respectively.
The two-periodicity of $\L$ induces a two-periodicity of $\widehat\L.$
By Lemma \ref{7-dim}, its quadratic relation spaces have dimension $18-7=11,$ and the components $\widehat\L_i^{i+2}$ have dimension $7$ for all $i\in\ZZ.$

Let us describe the subspaces of relations $\widehat{\R}_{j-1}^{j+1}$ in the chosen bases.
Denote by $\xi_i, \br{\xi}_i, \upsilon^{ij}, \br{\upsilon}^{ij}$ the bases of the spaces $V^*, \wb{V}^*, U^*, \wb{U}^*$ that are dual to the bases $x_i, \br{x}_i, u^{ij}, \br{u}^{ij}$ of $V, \wb{V}, U, \wb{U}.$ Thus, the 11-dimensional subspace $\widehat{\R}_{0}^{2}\subset \widehat{\L}_{1}^{2}\otimes \widehat{\L}_{0}^{1}\cong U^*\otimes V^*$ has the following basis:
\begin{equation}\label{anticommut1}
\begin{alignedat}{3}
p \upsilon^{33}\xi_1 + \upsilon^{31}\xi_3,& \quad & q\upsilon^{22}\xi_1 + \upsilon^{12}\xi_2, & \quad &\upsilon^{11}\xi_1,
\\
 q\upsilon^{33}\xi_2 + \upsilon^{23}\xi_3,& \quad &
p\upsilon^{11}\xi_2 + \upsilon^{12}\xi_1, & \quad & \upsilon^{22}\xi_2,
\\
 p\upsilon^{22}\xi_3 + \upsilon^{23}\xi_2,& \quad &
q \upsilon^{11}\xi_3 + \upsilon^{31}\xi_1, & \quad &\upsilon^{33}\xi_3,
\\
 \upsilon^{23}\xi_1- \upsilon^{31}\xi_2, & \quad & \upsilon^{31}\xi_2- \upsilon^{12}\xi_3 , & \quad &
\end{alignedat}
\end{equation}
while the subspace $\widehat{\R}_{1}^{3}\subset \widehat{\L}_{2}^{3}\otimes \widehat{\L}_{1}^{2}\cong \wb V^*\otimes U^*$ has the following basis:

\begin{equation}\label{anticommut2}
\begin{alignedat}{3}
q \br\xi_1\upsilon^{33} + \br\xi_3\upsilon^{31},& \quad & p\br\xi_1\upsilon^{22} + \br\xi_2\upsilon^{12}, & \quad &\br\xi_1\upsilon^{11},
\\
 p\br\xi_2\upsilon^{33} + \br\xi_3\upsilon^{23},& \quad &
q\br\xi_2\upsilon^{11} + \br\xi_1\upsilon^{12}, & \quad & \br\xi_2\upsilon^{22},
\\
 q\br\xi_3\upsilon^{22} + \br\xi_2\upsilon^{23},& \quad &
p\br\xi_3\upsilon^{11} + \br\xi_1\upsilon^{31}, & \quad &\br\xi_3\upsilon^{33},
\\
\br\xi_1\upsilon^{23}- \br\xi_2\upsilon^{31}, & \quad & \br\xi_2\upsilon^{31}- \br\xi_3 \upsilon^{12}. & \quad &
\end{alignedat}
\end{equation}
Since the $\ZZ$\!--algebra $\widehat\L,$ like the $\ZZ$\!--algebra $\L,$ is actually 2-periodic, the given bases of the subspaces
$\widehat{\R}_{0}^{2}$ and $\widehat{\R}_{1}^{3}$ provide bases of all subspaces $\widehat{\R}_{j-1}^{j+1}.$

\subsection{Components of the quadratic dual}\label{subsec:dual-components}
We construct a finite-dimensional graded algebra $F$ and identify $\widehat\L^{\mathrm{op}}$ with a reindexed Veronese subalgebra of the $\ZZ$\!--algebra associated with $F.$
We then deduce Koszulness of $\L$ and compute the dimensions of the components of $\widehat\L.$

Put $\rho(2m)=3m$ and $\rho(2m+1)=3m+1,$ and let $J=\rho(\ZZ).$
Write $\cS_{\vartheta,J}$ for the $J$\!--Veronese subalgebra of $\cS_\vartheta,$ reindexed by $\rho,$ so that $(\cS_{\vartheta,J})_j^i=(S_\vartheta)_{\rho(j)-\rho(i)}.$
\begin{lemma}\label{lem:quantum-veronese-pbw}
The $\ZZ$\!--algebra $\cS_{\vartheta,J}$ is PBW and hence Koszul.
\end{lemma}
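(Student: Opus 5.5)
The plan is to verify the PBW property directly, using the monomial basis of $S_\vartheta$ from Proposition \ref{prop:quantum-polynomial}, and then apply Theorem \ref{PP} to get Koszulness.

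\textbf{Structure of $\cS_{\vartheta,J}$.} Since $\rho(2m+1)-\rho(2m)=1$ and $\rho(2m+2)-\rho(2m+1)=2$, the degree-one components of $\cS_{\vartheta,J}$ alternate between $(S_\vartheta)_1$ (dimension $3$) and $(S_\vartheta)_2$ (dimension $6$). Because $S_\vartheta$ is generated in degree one, $(S_\vartheta)_a(S_\vartheta)_b=(S_\vartheta)_{a+b}$. Hence $\cS_{\vartheta,J}$ is generated in degree one.

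\textbf{Ordered bases.} On $(S_\vartheta)_1$ take $z_1<z_2<z_3$. On $(S_\vartheta)_2$ take the six sorted monomials $z_az_b$ with $a\le b$, ordered lexicographically in $(a,b)$.

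\textbf{Admissible pairs.} For a product $(S_\vartheta)_1\otimes(S_\vartheta)_2$, declare $(z_a,\,z_bz_c)$ admissible iff $a\le b$. For $(S_\vartheta)_2\otimes(S_\vartheta)_1$, declare $(z_az_b,\,z_c)$ admissible iff $b\le c$. In both cases the admissible pairs are exactly the nondecreasing index triples, so they map bijectively onto the $10$ sorted cubic monomials. These form a basis of $(S_\vartheta)_3$ by Proposition \ref{prop:quantum-polynomial}.

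\textbf{Triangular form of the relations.} For the relation spaces to have the required triangular form, every non-admissible pair must equal a nonzero scalar times a lexicographically smaller admissible pair. This follows because the defining relations of $S_\vartheta$ let us reorder any monomial into sorted order, up to a scalar in $\kk^*$.

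It remains to check that sorting strictly lowers the word.
\begin{itemize}
\item Case $(z_a,z_bz_c)$ with $a>b$. The sorted word begins with $z_{\min}$, and $\min\le b<a$, so it is smaller.
\item Case $(z_az_b,z_c)$ with $a\le b$ and $c<b$. Let $s_1\le s_2$ be the two smallest indices among $a,b,c$. If $s_1<a$, the sorted word is smaller. Otherwise $s_1=a$, which forces $c\ge a$, and then $s_2=c<b$, so again the sorted word is smaller.
\end{itemize}
Thus the relations $w-\lambda\,\mathrm{sort}(w)$, over non-admissible $w$, give the required basis of $\R$ of the correct dimension $18-10=8$.

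\textbf{The PBW check (main step).} Normal words of any length are nondecreasing sequences of indices, grouped according to the alternating $1,2$ pattern. Each such word is simply a sorted monomial of the appropriate total degree. By Proposition \ref{prop:quantum-polynomial}, these monomials form a basis of the corresponding component $(S_\vartheta)_{\rho(j)-\rho(i)}$.

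In particular, the cubic normal words form a basis; their total degree is $4$ or $5$ depending on the parity of the starting index. Remark \ref{triplePBW} then shows that the chosen bases are PBW generators. Theorem \ref{PP} gives that $\cS_{\vartheta,J}$ is Koszul.

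The only delicate point is the strict lexicographic decrease in the second case above. This is handled by the short case analysis, so I expect no real obstacle.
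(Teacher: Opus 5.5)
Your argument is correct and is essentially the paper's proof made explicit. The paper adapts the proof of \cite[Chapter 4, Proposition 4.3]{PP}, whose key observation (normality of a concatenation of normal words of any positive lengths is checked on adjacent pairs) is exactly your description of normal words as nondecreasing expanded index sequences, with your sorting argument supplying the triangular form directly.

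The one point to state explicitly is that $\cS_{\vartheta,J}$ is quadratic, which is part of being PBW and is used later in the paper. It follows from what you have: by your triangular relations the normal words span the quadratic cover of $\cS_{\vartheta,J}$, and their images form a basis of every component of $\cS_{\vartheta,J}$, so the cover maps isomorphically onto it (this also makes the appeal to Remark~\ref{triplePBW} unnecessary).
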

\begin{proof}
By Proposition \ref{prop:quantum-polynomial}, the ordered monomials form a PBW basis of $S_\vartheta.$
Equip $(\cS_{\vartheta,J})_{i+1}^{i}=(S_\vartheta)_{\rho(i+1)-\rho(i)}$ with the lexicographically ordered monomial basis.
The proof of \cite[Chapter 4, Proposition 4.3]{PP} uses the following observation: for normal words $u_1,\ldots,u_n$ in $z_1,z_2,z_3,$ the word $u_1\cdots u_n$ is normal if and only if the words $u_r u_{r+1}$ are normal for $1\le r<n.$
This observation holds for any positive lengths of the $u_r,$ so the same proof shows that $\cS_{\vartheta,J}$ is quadratic and that the chosen bases are PBW generators.
Koszulness follows from Theorem \ref{PP}.
\end{proof}

Choose $\nu\in\kk^*$ with $\nu^3=p/q.$
Let $S_\eta$ be the homogeneous coordinate algebra of a quantum projective plane as in Section \ref{subsec:quantum-planes}, with $\wt\eta_{12}=\wt\eta_{23}=\wt\eta_{31}=\nu^{-1}.$
Denote the generators of $S_\eta$ by $y_1,y_2,y_3.$
Define the graded algebra $F=S_\eta/(y_1^3,y_2^3,y_3^3),$ explicitly
\begin{equation}\label{eq:truncated-quantum}
 F=\kk\langle y_1,y_2,y_3\rangle\big/\bigl(y_2y_1-\nu^{-1}y_1y_2,\ y_3y_2-\nu^{-1}y_2y_3,\ y_3y_1-\nu y_1y_3,\ y_1^3,y_2^3,y_3^3\bigr).
\end{equation}
The monomials $y_1^ay_2^by_3^c,$ $0\le a,b,c\le2,$ form the basis of $F$.

The $\ZZ$\!--algebra $\mathcal F$ associated to $F$ has components $\mathcal F_j^i=F_{j-i}.$
Let $\mathcal F_J$ be its $J$\!--Veronese subalgebra, reindexed by $\rho,$ so that $(\mathcal F_J)_j^i=F_{\rho(j)-\rho(i)}.$

\begin{proposition}\label{prop:dual-veronese}
Define linear maps from the generator spaces of $\widehat\L^{\mathrm{op}}$ to those of $\mathcal F_J$ by the following formulas, for $1\le a\le3$:
\begin{equation}\label{eq:dual-map}
\begin{aligned}
 \xi_a&\longmapsto y_a,&\upsilon^{aa}&\longmapsto y_a^2,\\
 \upsilon^{12}&\longmapsto-q\nu y_1y_2,&
 \upsilon^{13}&\longmapsto-q\nu^2y_1y_3,&
 \upsilon^{23}&\longmapsto-q\nu y_2y_3.
\end{aligned}
\end{equation}
The generators $\br{\xi}_a$ and $\br{\upsilon}^{ab}$ are sent to the same elements of $F$ as $\xi_a$ and $\upsilon^{ab},$ respectively.
The maps \eqref{eq:dual-map} extend to an isomorphism $\widehat\L^{\mathrm{op}}\cong\mathcal F_J.$
The component maps of the isomorphism $\widehat\L^{\mathrm{op}}\cong\mathcal F_J$ are linear isomorphisms $\psi_i^j:\widehat\L_i^j\xrightarrow{\sim}F_{\rho(j)-\rho(i)}$ for $i\le j.$
They satisfy $\psi_i^k(ba)=\psi_i^j(a)\psi_j^k(b)$ for $a\in\widehat\L_i^j$ and $b\in\widehat\L_j^k,$ where $i\le j\le k.$
\end{proposition}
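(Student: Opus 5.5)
The plan is to show that the assignment \eqref{eq:dual-map} is a bijection on generators, that it carries the quadratic relations of $\widehat\L^{\mathrm{op}}$ into those of $\mathcal F_J$, and then to conclude by a dimension count once $\mathcal F_J$ is known to be quadratic.

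\emph{Generators.} The maps \eqref{eq:dual-map} send the bases $\xi_a$ and $\upsilon^{ab}$ to nonzero scalar multiples of the monomial bases $y_a$ of $F_1$ and $y_ay_b$ ($a\le b$) of $F_2$. So on generators they are linear isomorphisms
\[
 \widehat\L_i^{i+1}\to F_{\rho(i+1)-\rho(i)},
\]
which is $F_1$ for $i$ even and $F_2$ for $i$ odd, matching $V^*,\wb V^*$ and $U^*,\wb U^*$. In $\widehat\L^{\mathrm{op}}$ a product $ba$ with $a\in\widehat\L_i^{j}$ corresponds to $\psi(a)\psi(b)$ in $F$. Hence the relation $\upsilon\xi\in U^*\otimes V^*$ of \eqref{anticommut1} is sent to $\psi(\xi)\psi(\upsilon)\in F_3$, and the relation $\br\xi\upsilon\in\wb V^*\otimes U^*$ of \eqref{anticommut2} is sent to $\psi(\upsilon)\psi(\br\xi)\in F_3$.

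\emph{Relations map to zero.} Next I check that each of the $11+11$ basis relations becomes zero in $F_3$. The cubic terms are immediate; for instance $\upsilon^{11}\xi_1\mapsto y_1^3=0$. For the binomial relations, I rewrite both terms in the normal order $y_1^ay_2^by_3^c$ using $y_2y_1=\nu^{-1}y_1y_2$, $y_3y_2=\nu^{-1}y_2y_3$, $y_3y_1=\nu y_1y_3$ and $\nu^3=p/q$. As an example, $p\upsilon^{33}\xi_1+\upsilon^{31}\xi_3$ maps to
\[
 p\,y_1y_3^2-q\nu^2y_3y_1y_3=(p-q\nu^3)\,y_1y_3^2=0.
\]
The remaining twenty relations are handled in the same way. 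The two trigonal relations $\upsilon^{23}\xi_1-\upsilon^{31}\xi_2$ and so on require $q\nu\cdot(\text{factor})$ to match $q\nu^2\cdot(\text{factor})$, which should use exactly the powers of $\nu$ chosen in \eqref{eq:dual-map}. This bookkeeping, done separately for both parities, is the main computational obstacle, and I expect it to pin down the specific scalars $-q\nu,-q\nu^2,-q\nu$.

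\emph{$\mathcal F_J$ is quadratic with $11$-dimensional relation spaces.} Since $F=S_\eta/(y_1^3,y_2^3,y_3^3)$ and each $y_a^3$ is normal in $S_\eta$, the kernel of $S_\eta\to F$ is $\sum_a y_a^3S_\eta$. An element $y_a^3v$ in degree $\rho(j)-\rho(i)$, with $j\ge i+2$, factors in the Veronese $\ZZ$-algebra through the length-two component $(S_\eta)_{\rho(i+2)-\rho(i)}=(S_\eta)_3$, because $\rho(i+2)-\rho(i)=3$ for every $i$. Thus the kernel of $\cS_{\eta,J}\to\mathcal F_J$ is generated by quadratic elements. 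Since $\cS_{\eta,J}$ is quadratic by Lemma \ref{lem:quantum-veronese-pbw}, it follows that $\mathcal F_J$ is quadratic.

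Its relation space in each length-two slot is the kernel of $F_1\otimes F_2\to F_3$ (or of $F_2\otimes F_1\to F_3$). Here $F_3$ is spanned by the monomials of degree three with exponents at most $2$, so $\dim F_3=10-3=7$, and the multiplication map is surjective. Hence the kernel has dimension $18-7=11$, matching $\dim\widehat\R=11$.

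\emph{Conclusion.} The generator isomorphism therefore extends to a surjection of the tensor $\ZZ$-algebras that maps the $11$-dimensional relation spaces of $\widehat\L^{\mathrm{op}}$ into the $11$-dimensional relation spaces of $\mathcal F_J$. By equality of dimensions these relation spaces are identified. Since both algebras are quadratic, the induced morphism $\widehat\L^{\mathrm{op}}\to\mathcal F_J$ is an isomorphism. Its components are the linear isomorphisms $\psi_i^j:\widehat\L_i^j\to F_{\rho(j)-\rho(i)}$, and the multiplicativity rule $\psi_i^k(ba)=\psi_i^j(a)\psi_j^k(b)$ is just the statement that the morphism is defined on $\widehat\L^{\mathrm{op}}$.
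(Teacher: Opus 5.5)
Your proposal is correct and follows essentially the same route as the paper. Both arguments get quadraticity of $\mathcal F_J$ from normality of the $y_a^3$ together with $\rho(i+2)-\rho(i)=3$, check directly that the relation tensors map to zero (the paper does this in its appendix, and your sample computation agrees with it), and then use $\dim F_3=7$ and surjectivity of multiplication to identify the $11$-dimensional relation spaces.
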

\begin{proof}
By Lemma \ref{lem:quantum-veronese-pbw}, the algebra $\cS_{\eta,J}$ is quadratic.
Put $I=(y_1^3,y_2^3,y_3^3)\subseteq S_\eta.$
Since each $y_a^3$ is normal in $S_\eta,$ we have $I_d=\sum_{a=1}^3y_a^3(S_\eta)_{d-3}$ for $d\ge3,$ and $I_d=0$ for $d<3.$
Since $\rho(i+2)-\rho(i)=3,$ these identities show that the kernel of $\cS_{\eta,J}\to\mathcal F_J$ is generated by $y_a^3\in(\cS_{\eta,J})_{i+2}^i$ for $1\le a\le3$ and $i\in\ZZ.$
Thus, the algebra $\mathcal F_J$ is quadratic.

The assignments \eqref{eq:dual-map} annihilate the relation tensors in \eqref{anticommut1}--\eqref{anticommut2}, as verified in Appendix \ref{sec:calculations}, and therefore extend to a homomorphism $\widehat\L^{\mathrm{op}}\to\mathcal F_J.$
Both multiplication maps $F_1\otimes F_2\to F_3$ and $F_2\otimes F_1\to F_3$ are surjective.
Since $\dim F_3=7,$ the eleven independent relation tensors in each of \eqref{anticommut1} and \eqref{anticommut2} form the entire quadratic kernel.
Since $\widehat\L^{\mathrm{op}}$ and $\mathcal F_J$ are quadratic, the homomorphism $\widehat\L^{\mathrm{op}}\to\mathcal F_J$ defined by \eqref{eq:dual-map} is an isomorphism.
\end{proof}

\begin{theorem}\label{thm:koszul}
The quadratic $\ZZ$\!--algebra $\L$ is Koszul.
\end{theorem}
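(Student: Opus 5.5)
Since a quadratic $\ZZ$\!--algebra is Koszul if and only if its quadratic dual is (\cite[Chapter 2, Corollary 3.3]{PP}), I will prove that $\widehat\L$ is Koszul. Koszulness is also stable under passing to the opposite algebra, because the Koszul complex of $\A^{\mathrm{op}}$ is the dual of that of $\A$. So it is enough to show that $\widehat\L^{\mathrm{op}}$ is Koszul, and by Proposition \ref{prop:dual-veronese} this algebra is isomorphic to $\mathcal F_J$. The plan is therefore to show that $\mathcal F_J$ is a PBW $\ZZ$\!--algebra and apply Theorem \ref{PP}.

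For the PBW basis I use the monomials $y_1^ay_2^by_3^c$ with $0\le a,b,c\le2$, ordered lexicographically, in each generator space $F_1$ and $F_3$ of $\mathcal F_J$; these are the spaces $(\mathcal F_J)_{i+1}^i=F_{\rho(i+1)-\rho(i)}$, whose degree alternates between $1$ and $2$. A product of two ordered monomials in $S_\eta$ is, up to a nonzero scalar from the $q$-commutation relations, again an ordered monomial. In the proof of Lemma \ref{lem:quantum-veronese-pbw} the relation spaces are spanned by binomials (non-normal word minus a scalar times the normal word) together with, for $\mathcal F_J$, the monomials $y_a^3$. Correspondingly, the admissible pairs $(u_1,u_2)$ are those whose product is normal in $S_\eta$ and has every exponent at most $2$. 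Now take a cubic word $u_1u_2u_3$ of total degree $4$ or $5$ with $(u_1,u_2)$ and $(u_2,u_3)$ admissible. By the observation in Lemma \ref{lem:quantum-veronese-pbw} it is normal in $S_\eta$, so it is an ordered monomial. The remaining point is its exponents: each variable occurs in at most one of the three factors, because two consecutive factors that share a variable $y_a$ would need $y_a$ to be the last letter of the first and the first letter of the second, and the bound $\le 2$ must then hold across the whole word. The normal cubic words are therefore exactly the ordered monomials of that degree with all exponents at most $2$, i.e.\ a basis of the corresponding component of $F$. Remark \ref{triplePBW} then gives the PBW property, and Theorem \ref{PP} gives Koszulness of $\mathcal F_J$, hence of $\widehat\L$ and of $\L$.

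The main obstacle is this exponent control. Normality only involves adjacent pairs, and it is not immediate that a variable cannot accumulate exponent $3$ across the middle factor. An example would be $u_1=y_1$, $u_2=y_1 y_2$ (degree $2$), $u_3$ beginning with $y_2$. Here the pair conditions $a_1+a_2\le2$ and $b_2+b_3\le2$ only constrain adjacent factors, so a variable $y_a$ appearing in $u_1$ and $u_3$ can only occur if it is absent from $u_2$. But then ordering forces $y_a$ to appear in $u_2$ as well, or else the word $u_1u_2u_3$ is not ordered. I would check this case by case, using that ordered monomials split into consecutive blocks of each variable. If a gap appears, the fallback is to apply Remark \ref{triplePBW} directly by a dimension count: compare $\dim F_4=12$ and $\dim F_5=6$ with the number of normal cubic words of each shape.
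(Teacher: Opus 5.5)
Your reductions to Koszulness of $\mathcal F_J$ (quadratic duality, the opposite algebra, Proposition \ref{prop:dual-veronese}) are fine. The central claim is false, though: $\mathcal F_J$ is not PBW for the lexicographically ordered monomial bases of its generator spaces (which are $F_1$ and $F_2$, not $F_3$). The exponent control you single out as the main obstacle really does fail.

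In total degree $4$ (even $i$, factor degrees $1,2,1$) your argument works. A run $y_a^3$ in an ordered word of length $4$ occupies letters $1$--$3$ or $2$--$4$, so it lies inside one adjacent pair.

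For odd $i$, however, the factor degrees are $2,1,2$, and the single middle letter can be shared with both neighbours. Take $u_1=y_1y_2$, $u_2=y_2$, $u_3=y_2y_3$. By your own description both $(u_1,u_2)$ and $(u_2,u_3)$ are admissible, since $y_1y_2^2$ and $y_2^2y_3$ are ordered with all exponents at most $2$. Yet $u_1u_2u_3=y_1y_2^3y_3=0$ in $F$. For a fixed variable with exponents $e_1,e_2,e_3$ in $u_1,u_2,u_3$, the adjacent bounds $e_1+e_2\le2$ and $e_2+e_3\le2$ do not prevent $e_1+e_2+e_3=3$.

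In degree $5$ the normal cubic words have exponent vectors $(2,2,1),(2,1,2),(1,2,2),(1,3,1)$. That is four words, whereas $\dim F_5=3$. Your values for the fallback are also off: $\dim F_4=6$ and $\dim F_5=3$, not $12$ and $6$. So Remark \ref{triplePBW} does not apply, and the fallback dimension count would only confirm the failure.

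Reordering the monomial bases does not repair this. Every multidegree component of $F$ has dimension at most one. Hence in each multidegree $\alpha\le(2,2,2)$ of total degree $3$ exactly one pair is admissible, necessarily the lexicographically smallest. No pair is admissible in a multidegree containing an exponent $3$.

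After relabeling the variables, assume the order on the middle $F_1$ is $y_1<y_2<y_3$. Then $(y_1,y_2y_3)$ and $(y_2,y_2y_3)$ are admissible, and exactly one of $(y_2^2,y_1)$ and $(y_1y_2,y_2)$ is admissible. So one of $y_2^2\cdot y_1\cdot y_2y_3$ and $y_1y_2\cdot y_2\cdot y_2y_3$ is a normal word of multidegree $(1,3,1)$, and it vanishes in $F_5$.

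The missing idea is to avoid a PBW basis for $\mathcal F_J$ altogether, as the paper does. PBW is used only for $\cS_{\eta,J}$ (Lemma \ref{lem:quantum-veronese-pbw}), and the cubes are then killed one at a time. In $G_a=S_\eta/(y_1^3,\ldots,y_{a-1}^3)$ the class of $y_a^3$ is normal and regular. Since $\rho(i+2)-\rho(i)=3$, these classes form a regular normalizing sequence of degree two in the reindexed $J$-Veronese $\ZZ$-algebra of $G_a$. The argument of \cite[Chapter 2, Corollary 6.3]{PP} shows that the quotient of a Koszul algebra by such a sequence is Koszul, which gives Koszulness of $\mathcal F_J$ after three steps.
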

\begin{proof}
By Lemma \ref{lem:quantum-veronese-pbw}, the algebra $\cS_{\eta,J}$ is Koszul.
Put $G_a=S_\eta/(y_1^3,\ldots,y_{a-1}^3)$ for $1\le a\le4,$ so that $G_1=S_\eta$ and $G_4=F.$
Each $G_a$ has basis $y_1^{b_1}y_2^{b_2}y_3^{b_3},$ where $b_1,b_2,b_3\in\ZZ_{\ge0}$ and $b_r\le2$ for $1\le r<a.$
For $1\le a\le3,$ the image of $y_a^3$ is normal in $G_a$ by the first three relations in \eqref{eq:truncated-quantum}, and multiplication by it on either side sends distinct basis monomials to nonzero scalar multiples of distinct basis monomials, so the class of $y_a^3$ in $G_a$ is regular.
Since $\rho(i+2)-\rho(i)=3,$ the elements represented by $y_a^3\in(G_a)_{\rho(i+2)-\rho(i)},$ with $i\in\ZZ,$ form a regular normalizing sequence of degree two in the reindexed $J$\!--Veronese algebra associated with $G_a,$ with quotient the corresponding algebra associated with $G_{a+1}.$
Applying the proof of \cite[Chapter 2, Corollary 6.3]{PP} successively for $a=1,2,3,$ with the simple modules $\mS_i$ in place of $\kk,$ gives Koszulness of $\mathcal F_J.$
By Proposition \ref{prop:dual-veronese}, the algebra $\widehat\L^{\mathrm{op}}$ is therefore Koszul, and passing to the opposite algebra and applying quadratic duality gives Koszulness of $\L$ \cite[Chapter 2, Corollary 3.3]{PP}.
\end{proof}

We define the matrix of Hilbert series $\widehat H(z)$ of the quadratic dual $\widehat\L$ by
\[
 \widehat H(z)_{ab}
 =\sum_{\substack{n\ge0\\n\equiv b-a\ (2)}}
 \dim\widehat\L_a^{a+n}\,z^n,
 \qquad a,b\in\{0,1\}.
\]
The row index $a$ and column index $b$ specify the parities of $i$ and $j$ in $\widehat\L_i^j,$ respectively.

\begin{corollary}\label{prop:dual}
For every $i\in\ZZ,$ the component $\widehat\L_i^{i+4}$ is one-dimensional, and $\widehat\L_i^j=0$ whenever $j-i>4.$
For $j-i=3,$ the duals of the contraction maps in \eqref{firstder} induce natural isomorphisms
\begin{equation}\label{thirdpart}
\widehat{\L}_{i}^{j}\cong
\begin{cases}
V & \text{if \;}  j\equiv 0\, (\mod\; 4),\\
U & \text{if \;}  j\equiv 1\, (\mod\; 4),\\
\wb{V} & \text{if \;}  j\equiv 2\, (\mod\; 4),\\
\wb{U} & \text{if \;}  j\equiv 3\, (\mod\; 4).
\end{cases}
\end{equation}
The matrix $\widehat H(z)$ is given by
\begin{equation}\label{eq:dual-hilbert}
 \widehat H(z)=\begin{pmatrix}
 1+7z^2+z^4&3z+6z^3\\
 6z+3z^3&1+7z^2+z^4
 \end{pmatrix}.
\end{equation}
\end{corollary}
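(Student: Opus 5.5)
The plan is to read off everything from the isomorphism of Proposition \ref{prop:dual-veronese}, which identifies $\widehat\L_i^j$ with $F_{\rho(j)-\rho(i)}$. The only extra input is the Hilbert series of $F$. Since the monomials $y_1^ay_2^by_3^c$ with $0\le a,b,c\le2$ form a basis of $F$, we get
\[
 H_F(t)=(1+t+t^2)^3=1+3t+6t^2+7t^3+6t^4+3t^5+t^6 .
\]

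\textbf{Step 1: the dimensions.} Write $n=j-i$ and compute $\rho(i+n)-\rho(i)$ case by case.
\begin{itemize}
\item If $i$ is even, the values for $n=1,2,3,4$ are $1,3,4,6$. This gives dimensions $3,7,6,1$.
\item If $i$ is odd, the values are $2,3,5,6$. This gives dimensions $6,7,3,1$.
\end{itemize}
For $n\ge5$, in both parities $\rho(i+n)-\rho(i)\ge7>6$, so the component vanishes. This proves the first sentence of the statement. Collecting the even-$n$ terms into the diagonal entries and the odd-$n$ terms into the off-diagonal entries, according to parity, gives exactly the matrix \eqref{eq:dual-hilbert}.

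\textbf{Step 2: the natural isomorphisms for $j-i=3$.} Here I would use standard quadratic duality rather than the explicit isomorphism with $F$. The degree-three component of the dual is the dual of
\[
 K_i=\bigl(\R_{i+2}^{i}\otimes\L_{i+3}^{i+2}\bigr)\cap\bigl(\L_{i+1}^{i}\otimes\R_{i+3}^{i+1}\bigr)
\]
inside the triple tensor product of generator spaces of $\L$. That is, $\widehat\L_i^{i+3}\cong K_i^*$, possibly after transposing indices to match the conventions of Definition \ref{def:quadratic-duality}.

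Now take the contraction map from \eqref{firstder} whose target is this triple tensor product. Its image lies in $K_i$: contracting $\mathsf\Phi$ further in the first or in the last remaining factor produces a second contraction, and by Lemma \ref{7-dim} and \eqref{defalg} that second contraction takes values in the corresponding relation space. By Lemma \ref{lem:first-contractions}, this contraction map is injective. Its source has dimension $3$ or $6$, and this equals $\dim\widehat\L_i^{i+3}=\dim K_i$ from Step 1. Hence the contraction map is an isomorphism onto $K_i$, and dualizing gives \eqref{thirdpart}.

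\textbf{Main obstacle.} The main obstacle is bookkeeping: I must match each residue of $j$ modulo $4$ with the correct contraction map in \eqref{firstder} and check the dual space that results. For example, take $i=0$, $j=3$. The triple product is $V\otimes U\otimes\wb V$, the relevant map is $\nabla_{\wb U}$ with source $\wb U^*$, and so $\widehat\L_0^3\cong\wb U$, consistent with $j\equiv3$. The remaining residues follow by cyclic symmetry, using the $2$-periodicity $\theta$ together with Lemma \ref{inv}. I also need to check the compatibility of the cyclic order with the orientation convention of $\widehat\L$ versus $\widehat\L^{\mathrm{op}}$.
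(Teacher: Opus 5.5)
Your proposal is correct and follows the paper's proof. Step 1 reads off the dimensions and $\widehat H(z)$ from $\dim\widehat\L_i^j=\dim F_{\rho(j)-\rho(i)}$, exactly as the paper does. In Step 2, your argument that the injective contraction map lands in the intersection $K_i$ of relation subspaces and is an isomorphism onto it by dimension count is the dual formulation of the paper's argument: there $\nabla^*$ factors through the quotient $\widehat\L_i^{i+3}$ with surjective second arrow, and the same dimension count concludes (e.g.\ $\nabla_{\wb U}$ gives $\widehat\L_0^3\cong\wb U$ and $\nabla_V$ gives $\widehat\L_1^4\cong V$, as you say).
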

\begin{proof}
By Proposition \ref{prop:dual-veronese}, we have $\dim_\kk\widehat\L_i^j=\dim_\kk F_{\rho(j)-\rho(i)}.$
The monomial basis of $F$ gives $(\dim F_0,\ldots,\dim F_6)=(1,3,6,7,6,3,1)$ and $F_d=0$ for $d>6.$
Together with $\rho(i+4)-\rho(i)=6,$ this proves \eqref{eq:dual-hilbert} and the assertions about $\widehat\L_i^{i+4}$ and $\widehat\L_i^j$ for $j-i>4.$

For the isomorphisms in \eqref{thirdpart}, consider the dual map $\nabla_V^*:\wb U^*\otimes\wb V^*\otimes U^*\to V$ to the map $\nabla_V$ in \eqref{firstder}.
The image of $\nabla_V$ satisfies
\[
 \Im\nabla_V\subseteq
 \bigl(\Im\Delta_{\wb U,V}\otimes\wb U\bigr)
 \cap\bigl(U\otimes\Im\Delta_{V,U}\bigr),
\]
since both inclusions are obtained by contracting the same potential in two successive factors.
By \eqref{defalg} and Definition \ref{def:quadratic-duality}, the subspace $\widehat\R_2^4\otimes U^*$ annihilates $U\otimes\Im\Delta_{V,U},$ while $\wb U^*\otimes\widehat\R_1^3$ annihilates $\Im\Delta_{\wb U,V}\otimes\wb U.$
Thus, the map $\nabla_V^*$ vanishes on the subspace $\widehat\R_2^4\otimes U^*+\wb U^*\otimes\widehat\R_1^3.$
This subspace is exactly the kernel of the quotient map $\wb U^*\otimes\wb V^*\otimes U^*\to\widehat\L_1^4,$ so we obtain the factorization
\begin{equation}\label{eq:dual-contraction-factorization}
 \wb U^*\otimes\wb V^*\otimes U^*
\longrightarrow\widehat\L_1^4\longrightarrow V.
\end{equation}
The first arrow is the quotient map, and the composite is $\nabla_V^*.$
The second arrow is surjective because $\nabla_V$ is injective by Lemma \ref{lem:first-contractions}.
Applying this argument to $\nabla_U^*,$ $\nabla_{\wb V}^*,$ and $\nabla_{\wb U}^*$ gives the surjections
\[
 \widehat\L_2^5\longrightarrow U,\qquad
 \widehat\L_3^6\longrightarrow\wb V,\qquad
 \widehat\L_0^3\longrightarrow\wb U.
\]
Since $\dim\widehat\L_1^4=\dim\widehat\L_3^6=3$ and $\dim\widehat\L_2^5=\dim\widehat\L_0^3=6,$ all four surjections are isomorphisms.
\end{proof}

\subsection{Frobenius pairings}
We construct multiplication pairings between components of $\widehat\L$ whose degrees sum to four.

\begin{corollary}\label{cor:dual-pairing}
The quadratic dual $\widehat\L$ is Frobenius of index four.
There are functionals $\varepsilon_i:\widehat\L_i^{i+4}\to\kk$ giving perfect multiplication pairings and satisfying
\begin{equation}\label{eq:graded-trace}
 \varepsilon_i(ba)=(-1)^{(j-i)(i+4-j)}
 \varepsilon_j\bigl(\widehat\theta^{\,2}(a)b\bigr),
 \qquad a\in\widehat\L_i^j,\quad b\in\widehat\L_j^{i+4}.
\end{equation}
Here $i\le j\le i+4$ and $\widehat\theta$ is the degree-two periodicity of the dual algebra.
\end{corollary}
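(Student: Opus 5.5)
We will derive the Frobenius property from Proposition \ref{prop:dual-veronese}, which gives linear isomorphisms $\psi_i^j:\widehat\L_i^j\to F_{\rho(j)-\rho(i)}$ reversing the order of multiplication. Since $\rho(i+4)-\rho(i)=6$ and $F_6$ is spanned by the socle monomial $y_1^2y_2^2y_3^2$, we define $\varepsilon_i$ as the composite of $\psi_i^{i+4}$ with the coefficient of $y_1^2y_2^2y_3^2$ in the monomial basis. The dimension statements in Definition \ref{def:frobenius-duality} are already contained in Corollary \ref{prop:dual}. For perfectness, take $a\in\widehat\L_i^j$, $b\in\widehat\L_j^{i+4}$. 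Then $\psi_i^{i+4}(ba)=\psi_i^j(a)\psi_j^{i+4}(b)$, and the degrees $\rho(j)-\rho(i)$ and $\rho(i+4)-\rho(j)$ add up to $6$. So we only need the multiplication $F_d\otimes F_{6-d}\to F_6$ to be a perfect pairing. This holds because $F$ is a quantum complete intersection. Using the $q$-commutation relations, $y_1^ay_2^by_3^c\cdot y_1^{a'}y_2^{b'}y_3^{c'}$ equals a nonzero scalar times $y_1^{a+a'}y_2^{b+b'}y_3^{c+c'}$. This product is nonzero in $F_6$ exactly when $(a',b',c')=(2-a,2-b,2-c)$. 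The pairing matrix in monomial bases is therefore a permutation matrix with nonzero entries, so it is perfect.

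Next we prove the twisted symmetry \eqref{eq:graded-trace}. The plan is to compute the Nakayama automorphism of $F$ with respect to the socle functional $\varepsilon$. Put $\mu=y_1^ay_2^by_3^c$ and $\mu'=y_1^{2-a}y_2^{2-b}y_3^{2-c}$. Moving the factors through each other gives
\[
 \mu\mu'=\lambda(a,b,c)\,\mu'\mu,
\]
where $\lambda(a,b,c)$ is an explicit product of powers of $\nu^{\pm1}$. It has the form $\nu^{\ell(a,b,c)}$ with $\ell$ linear in $(a,b,c)$, because the exponents $2-a,2-b,2-c$ are affine in $a,b,c$ and the quadratic terms cancel. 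Consequently $\varepsilon(xy)=\varepsilon(\sigma(y)x)$ for the graded automorphism $\sigma(y_k)=\nu^{m_k}y_k$ of $F$, with suitable integers $m_k$.

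We then transport this identity back to $\widehat\L$. Because of the order reversal, $\varepsilon_i(ba)=\varepsilon(\psi(a)\psi(b))$ and $\varepsilon_j(\widehat\theta^{2}(a)b)=\varepsilon(\psi(b)\psi(\widehat\theta^2(a)))$. Comparing the two, it suffices to show that $\psi\circ\widehat\theta^{\,2}$ agrees with $\sigma\circ\psi$ up to the sign $(-1)^{(j-i)(i+4-j)}$ and up to rescaling the $\varepsilon_i$ by constants depending on $i$. Here $\widehat\theta^2$ shifts indices by $4$, and $\rho(i+4)=\rho(i)+6$, so the Veronese position of a component is preserved. On generators, $\psi$ sends $\xi_a$ and $\bar\xi_a$ to the same element and treats $\upsilon^{ab}$ and $\bar\upsilon^{ab}$ alike, so $\psi\widehat\theta^{2}\psi^{-1}$ is the identity in the identification of Proposition \ref{prop:dual-veronese}. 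The twist therefore has to be produced by the scalars. We will absorb $\sigma$ by choosing $\varepsilon_i=c_i\,\varepsilon\circ\psi_i^{i+4}$ with constants $c_i$ satisfying $c_{i+1}/c_i$ equal to the eigenvalue of $\sigma$ on the relevant generator. The sign $(-1)^{(j-i)(i+4-j)}$ will be accommodated by the same choice, using that it is multiplicative in the generator degrees.

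The main obstacle is this last compatibility. The eigenvalues of $\sigma$ are not constant on a generator space: they differ between $y_1,y_2,y_3$. So a scalar rescaling of the $\varepsilon_i$ alone may not suffice, and one may instead need to use that $\widehat\theta$ is defined via the barred identifications together with the monomial weights. The fallback is to verify \eqref{eq:graded-trace} directly on generators and extend by induction on $j-i$, which is legitimate since both sides are bilinear and multiplicative. First I would compute $\lambda(a,b,c)$ explicitly to see whether $\sigma$ is in fact scalar on each $F_d$. With $\widetilde\eta_{12}=\widetilde\eta_{23}=\widetilde\eta_{31}=\nu^{-1}$ the commutation factors cyclically cancel, which suggests that $\sigma$ acts on each $F_d$ by $\pm1$. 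That would reduce everything to the sign check.
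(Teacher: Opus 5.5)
Your setup is the paper's: the socle-coefficient functional on $F_6$ transported by $\psi_i^{i+4}$, perfectness from the monomial pairing $F_r\otimes F_{6-r}\to F_6$, and the observation that $\psi$ turns $\widehat\theta^{\,2}$ into the identity. However, there is a gap: the proposal stops exactly at the step that carries the content of \eqref{eq:graded-trace}. Each $\widehat\L_i^{i+4}$ is one-dimensional, so $\varepsilon_i$ is determined up to a constant. Since $\psi\circ\widehat\theta^{\,2}=\psi$, the identity can hold for some choice of constants only if the Nakayama automorphism $\sigma$ of $F$ for the socle functional can be absorbed into constants $c_i$. A $\sigma$ with different eigenvalues on $y_1,y_2,y_3$ could not be absorbed, as you note yourself. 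You leave this undecided: you say rescaling ``may not suffice'', offer a fallback, and guess that $\sigma$ acts by $\pm1$. So the twisted symmetry is not proved.

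The missing step is a short computation. By the first three relations in \eqref{eq:truncated-quantum}, $y^\alpha y^\beta=\nu^{e(\alpha,\beta)}y^{\alpha+\beta}$ with $e(\alpha,\beta)=-\alpha_2\beta_1+\alpha_3\beta_1-\alpha_3\beta_2$. Substitute $\beta=(2,2,2)-\alpha$. Then both $e(\alpha,\beta)$ and $e(\beta,\alpha)$ equal $-2\alpha_2+\alpha_1\alpha_2-\alpha_1\alpha_3+\alpha_2\alpha_3$. The linear terms cancel too, not only the quadratic ones, so your scalar $\lambda(a,b,c)$ is identically $1$. Hence $F$ is a symmetric algebra and $\sigma=\mathrm{id}$, not merely $\pm1$. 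The sign in \eqref{eq:graded-trace} therefore does not come from $\sigma$ at all. It comes entirely from the normalization $c_i=(-1)^i$, i.e.\ $\varepsilon_i=(-1)^i\,\varepsilon\circ\psi_i^{i+4}$, together with $(j-i)(i+4-j)\equiv j-i\pmod 2$. With these two facts your transport argument closes, and it then coincides with the paper's proof.
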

\begin{proof}
Let $\lambda:F_6\to\kk$ extract the coefficient of $y_1^2y_2^2y_3^2.$
For exponent vectors $\alpha,\beta,$ multiplication in $F$ is given by
\[
 y^\alpha y^\beta
 =\nu^{-\alpha_2\beta_1+\alpha_3\beta_1-\alpha_3\beta_2}y^{\alpha+\beta}
\]
when $\alpha+\beta\le(2,2,2)$ componentwise, and the product is zero otherwise.
If $\alpha+\beta=(2,2,2),$ the exponent is unchanged upon interchanging $\alpha$ and $\beta.$
A basis monomial $y^\alpha\in F_r$ has nonzero product with a basis monomial $y^\beta\in F_{6-r}$ if and only if $\beta=(2,2,2)-\alpha.$
Thus, the functional $\lambda$ gives symmetric perfect pairings between $F_r$ and $F_{6-r}.$
Define $\varepsilon_i=(-1)^i\lambda\circ\psi_i^{i+4}.$
For $r=\rho(j)-\rho(i),$ the complementary degree is $6-r=\rho(i+4)-\rho(j),$ so all pairings asserted for $\widehat\L$ are perfect.
Since $\rho(i+4)=\rho(i)+6$ and $(j-i)(i+4-j)\equiv j-i\pmod2,$ symmetry of $\lambda$ gives \eqref{eq:graded-trace}.
\end{proof}

\section{Normal quotients and quantum projective planes}\label{sec:quotients}
In this section, we identify two quotients of $\L$ with reindexed Veronese subalgebras of the $\ZZ$\!--algebras associated with quantum polynomial algebras in three variables.
We compute the Hilbert series of $\L$ and these quotients, construct a basis of $\L,$ and prove that $\L,$ $\widetilde R,$ and $R$ are noetherian on both sides.

\subsection{Normalizing sequences}
We construct two normalizing sequences of degree two in $\L$ and use them to define the quotients $\C$ and $\underline\C.$

Fix a primitive cube root of unity $\zeta\in\kk$ and a scalar $\nu\in\kk^*$ with $\nu^3=p/q.$
\begin{proposition}\label{prop:normal}
A $2$\!--periodic sequence  $\mathcal T=\{t_j\in \L_{j+2}^{j}, \; j\in \ZZ\}$ of elements of degree $2$ generated by $t_0=x_1u^{23}-\zeta x_2u^{31}$ and $t_1=u^{23}\br{x}_1-\zeta u^{12}\br{x}_3$ is normalizing.
\end{proposition}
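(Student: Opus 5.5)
The plan is to reduce the normalizing property to finitely many identities in degree three, and then check those by an explicit, multidegree-graded computation in the cubic components of $\L$.

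\textbf{Reduction to degree three.} I read ``$2$\!--periodic sequence generated by $t_0,t_1$'' as $t_{j+2}=\theta(t_j)$. By Definition \ref{def:normalizing-sequence} we must show $\L_i^jt_i=t_j\L_{i+2}^{j+2}$ for all $i>j$. Since $\L$ is generated in degree one, it suffices to prove the case $i=j+1$:
\[
 \L_{j+1}^{j}\,t_{j+1}=t_j\,\L_{j+3}^{j+2}\subseteq\L_{j+3}^{j}.
\]
Indeed, assuming this for all $j$, we write $\L_i^j=\L_{j+1}^j\L_i^{j+1}$. Induction on $i-j$ then gives
\[
 \L_i^jt_i=\L_{j+1}^j\,\L_i^{j+1}t_i=\L_{j+1}^j\,t_{j+1}\,\L_{i+2}^{j+3}=t_j\,\L_{j+3}^{j+2}\L_{i+2}^{j+3}=t_j\,\L_{i+2}^{j+2}.
\]
Since $\theta$ is an automorphism carrying $t_j$ to $t_{j+2}$, it is enough to treat $j=0$ and $j=1$. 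These are the identities
\[
 V\cdot t_1=t_0\cdot\wb V\ \text{ in } \L_3^0=(V\otimes U\otimes\wb V)/(\R_2^0\otimes\wb V+V\otimes\R_3^1),
\]
\[
 U\cdot\theta(t_0)=t_1\cdot\wb U\ \text{ in } \L_4^1=(U\otimes\wb V\otimes\wb U)/(\R_3^1\otimes\wb U+U\otimes\R_4^2).
\]
Here $\R_2^0,\R_3^1,\R_4^2$ are spanned by the $\rr_i$, the $\rs_i$, and the $\br\rr_i$, by Lemma \ref{7-dim}.

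\textbf{Verification via the multigrading.} All relations are homogeneous for the $\ZZ^3$-multidegree used in Lemma \ref{lem:first-contractions}. The elements $t_0$ and $t_1$ are homogeneous of multidegree $(1,1,1)$. Hence $x_at_1$ and $t_0\br x_a$ both lie in the single multidegree $(1,1,1)+\epsilon_a$ of $\L_3^0$. It therefore suffices to show, for each $a$, that $x_at_1$ is a nonzero scalar multiple of $t_0\br x_{\sigma(a)}$ for a suitable permutation $\sigma$ (most likely $\sigma(a)=a$). The same applies in $\L_4^1$ with $u^{bc}\theta(t_0)$ and $t_1\br u^{b'c'}$.

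For each multidegree I would compute the relevant graded piece of the relation space explicitly. These pieces are small, since each multidegree involves only a few monomials. I would then reduce both sides to a normal form, for instance by choosing admissible pairs as in the PBW setup of Section \ref{subsec:quadratic-duality} with a suitable ordering. Using the binomial relations $\rr_1,\dots,\rr_6$, $\rs_1,\dots,\rs_6$ and the trinomials $\rr_7,\rs_7$, the monomials $x_1u^{23}\br x_\ast$, $x_2u^{31}\br x_\ast$, $x_3u^{12}\br x_\ast$ should rewrite into one another with coefficients that are monomials in $p,q$. The condition $\zeta^2+\zeta+1=0$ (that is, $\zeta^3=1$, $\zeta\ne1$) should be exactly what makes the resulting coefficient vectors proportional. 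I expect the specific choices in $t_0$ and $t_1$ (using $u^{31}$ in one and $u^{12}\br x_3$ in the other) to be tuned so that this works with the trinomial $\rr_7$ or $\rs_7$.

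\textbf{Main obstacle and how to handle it.} The main difficulty is this explicit reduction in $\L_3^0$ and $\L_4^1$. One must know precisely which tensors are zero in these quotients, and in particular verify that $x_at_1$ and $t_0\br x_a$ are nonzero, so that the two $3$- (respectively $6$-) dimensional spans really coincide. For the nonvanishing and dimension count, I would use the Koszulness from Theorem \ref{thm:koszul} together with Corollary \ref{prop:dual}: the Hilbert series of $\L$ is the inverse of $\widehat H(-z)$, which determines $\dim\L_3^0$ and $\dim\L_4^1$, and their multidegree refinements follow in the same way. Then one inclusion, say $V\cdot t_1\subseteq t_0\cdot\wb V$, plus injectivity of right multiplication by $t_0$ on $\wb V$ (a single nonvanishing check in each multidegree) gives equality. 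Should a direct hand reduction become unwieldy, a fallback is to pair with $\widehat\L_0^3$ using the Frobenius structure from Corollary \ref{cor:dual-pairing}, since $\L_3^0$ is dual to $\widehat\R$-cubic data. In practice I expect the direct computation, recorded in the appendix alongside the other relation checks, to be the cleanest route.
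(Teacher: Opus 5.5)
Your reduction is correct, and it has the same skeleton as the paper's proof. Since $\L$ is generated in degree one and $\theta$ is an automorphism with $\theta(t_j)=t_{j+2}$, normality reduces to $V\,t_1=t_0\,\wb V$ in $\L_3^0$ and $U\,t_2=t_1\,\wb U$ in $\L_4^1$. Your induction on $i-j$ does the job that the paper does by observing that the scalars assemble into a diagonal automorphism $\gamma$. The multidegree observation is also right, and it forces $\sigma(a)=a$ and $\{b',c'\}=\{b,c\}$, so nothing needs to be guessed there.

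The gap is that the nine degree-three identities, which are the entire content of the proposition, are only predicted (``should rewrite'', ``I expect''). You neither show that $x_at_1\in\kk^*\,t_0\br x_a$ and $u^{bc}t_2\in\kk^*\,t_1\br u^{bc}$, nor identify the scalars. As it stands, the proposal only reduces the statement to these unverified identities.

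The idea that makes the check short, and which your plan lacks, is the cyclic symmetry. Using $\rr_7=0$, $\rs_7=0$ and $1+\zeta+\zeta^2=0$, one rewrites
$t_0=\zeta^{-1}(x_2u^{31}-\zeta x_3u^{12})=\zeta(x_3u^{12}-\zeta x_1u^{23})$ and
$t_1=\zeta^{-1}(u^{12}\br x_3-\zeta u^{31}\br x_2)=\zeta(u^{31}\br x_2-\zeta u^{23}\br x_1)$.
The index rotation $1\to2\to3\to1$ permutes the binomial relations (barred and unbarred) and fixes $\rr_7,\rs_7$, so it is an automorphism of $\L$. By these rewritings it sends $t_0\mapsto\zeta t_0$ and $t_1\mapsto\zeta^{-1}t_1$.

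Hence it suffices to prove $x_1t_1=t_0\br x_1$, $u^{11}t_2=t_1\br u^{11}$ and $u^{23}t_2=t_1\br u^{23}$, and these need only binomial relations:
\begin{itemize}
\item The first reduces to $x_2u^{13}\br x_1=p^{-1}x_2u^{11}\br x_3=x_1u^{12}\br x_3$, using $\rs_6$ and then $\rr_4$.
\item The second follows from the rotated forms of $t_2$ and $t_1$, together with $\rs_4,\br\rr_6$ and $\rs_6,\br\rr_4$.
\item The third reduces to $u^{23}\br x_2\br u^{13}=u^{12}\br x_3\br u^{23}$, via $\rs_5,\br\rr_1,\rs_2,\br\rr_3$.
\end{itemize}
Rotating yields the remaining six identities, with scalars that are powers of $\zeta$, e.g.\ $x_2t_1=\zeta^2t_0\br x_2$ and $x_3t_1=\zeta t_0\br x_3$. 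So $\zeta$ enters through the cyclic rewriting of $t_0,t_1$, not through the reductions themselves.

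One small slip in your fallback: the inclusion $V\,t_1\subseteq t_0\,\wb V$ together with injectivity of $\br x\mapsto t_0\br x$ does not give equality. You would need $\dim V\,t_1=3$, i.e.\ $x_at_1\neq0$. Once you exhibit $x_at_1=\lambda_a\,t_0\br x_a$ with $\lambda_a\neq0$ (and likewise on $U$), equality of the spans is immediate. No nonvanishing check, dimension count, or appeal to Koszulness is then needed.
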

\begin{proof}
The relations \eqref{trigon1} and \eqref{trigon2} give
\begin{equation}\label{symmetr}
\begin{aligned}
 t_0&=x_1u^{23}-\zeta x_2u^{31}
 =\zeta^{-1}(x_2u^{31}-\zeta x_3u^{12})
 =\zeta(x_3u^{12}-\zeta x_1u^{23}),\\
 t_1&=u^{23}\br x_1-\zeta u^{12}\br x_3
 =\zeta^{-1}(u^{12}\br x_3-\zeta u^{31}\br x_2)
 =\zeta(u^{31}\br x_2-\zeta u^{23}\br x_1).
\end{aligned}
\end{equation}
The relations \eqref{commut1} and \eqref{commut2} imply
\begin{equation}\label{eq:normal-preliminary}
 x_2u^{13}\br x_1=x_1u^{12}\br x_3,
 \qquad
 u^{11}t_2=\zeta^{-1}(u^{12}\br x_3-\zeta u^{13}\br x_2)\br u^{11}.
\end{equation}
For the generator $u^{23},$ we successively use the relations $\rs_5=0,$ $\br{\rr}_1=0,$ $\rs_2=0,$ and $\br{\rr}_3=0$ from \eqref{commut2} and the barred version of \eqref{commut1} to obtain
\begin{equation}\label{eq:normal-u23-calculation}
 u^{23}\br x_2\br u^{13}
 =q^{-1}u^{22}\br x_3\br u^{13}
 =(pq)^{-1}u^{22}\br x_1\br u^{33}
 =q^{-1}u^{12}\br x_2\br u^{33}
 =u^{12}\br x_3\br u^{23}.
\end{equation}
Combining \eqref{eq:normal-preliminary} and \eqref{eq:normal-u23-calculation} with \eqref{symmetr} and $t_2=\theta(t_0),$ we obtain
\begin{equation}\label{eq:normal-base-identities}
 t_0\br x_1=x_1t_1,\qquad
 u^{11}t_2=t_1\br u^{11},\qquad
 u^{23}t_2=t_1\br u^{23}.
\end{equation}
Cyclic permutation of the indices preserves the relations and takes $t_0$ to $\zeta t_0$ and $t_1$ to $\zeta^{-1}t_1$ by \eqref{symmetr}.
Applying this cyclic permutation to \eqref{eq:normal-base-identities} gives the nine identities
\begin{equation}\label{eq:normal-automorphism}
\begin{gathered}
 a\,t_{j+1}=t_j\,\theta(\gamma(a)),\qquad a\in\L_{j+1}^{j}\\[6pt]
\begin{array}{c|ccccccccc}
 a&x_1&x_2&x_3&u^{11}&u^{12}&u^{13}&u^{22}&u^{23}&u^{33}\\ \hline
 \gamma(a)/a&1&\zeta^2&\zeta&1&\zeta^2&\zeta&\zeta&1&\zeta^2
\end{array}.
\end{gathered}
\end{equation}
The diagonal map $\gamma$ is defined by the table in \eqref{eq:normal-automorphism}.
It preserves all defining relations and fixes each $t_j.$
Thus, it is an automorphism of $\L,$ and \eqref{eq:normal-automorphism} proves that $\mathcal T$ is normalizing.
\end{proof}

We can also take the other primitive cube root of unity $\zeta^{-1}=\wb{\zeta}$ and consider another $2$\!--periodic normalizing sequence $\underline{\mathcal T}$ that is generated by
the elements $\underline{t}_0=x_1u^{23}-\zeta^{-1} x_2u^{31}$ and $\underline{t}_1=u^{23}\br{x}_1-\zeta^{-1} u^{12}\br{x}_3.$

\begin{definition} We denote by $\C=\bigoplus_{i, j\in \ZZ}\C_{i}^{j}$ and $\underline{\C}=\bigoplus_{i, j\in \ZZ}{\underline{\C}}_{i}^{j}$ the quotient $\ZZ$\!--algebras $\L/\langle \mathcal T\rangle$ and $\L/\langle \underline{\mathcal T}\rangle,$ respectively.
\end{definition}

\subsection{Identification of the quotient algebras}
We identify $\C$ and $\underline\C$ with Veronese subalgebras of the $\ZZ$\!--algebras associated with the quantum polynomial algebras of Section \ref{subsec:quantum-planes}.

We use the reindexed $J$\!--Veronese algebras $\cS_{\vartheta,J}$ defined in Section \ref{subsec:dual-components}.
The equations $t_0=t_1=0$ in $\C$ give
\begin{equation}\label{quotienteq}
 x_2u^{31}=\zeta^{-1}x_1u^{23},\quad x_3u^{12}=\zeta x_1u^{23},\qquad
 u^{23}\br x_1=\zeta u^{12}\br x_3,\quad u^{31}\br x_2=\zeta^{-1}u^{12}\br x_3.
\end{equation}

\begin{theorem}\label{thm:quantum-plane}
The $\ZZ$\!--algebras $\C$ and $\underline\C$ are isomorphic to the $\ZZ$\!-algebras $\cS_{\vartheta, J}$ and $\cS_{\underline\vartheta, J},$
where $\vartheta,\; \underline\vartheta$ are $3\times 3$ matrices such that $t(\vartheta)=\zeta^{-1}q^{-1}p$ and $t(\underline\vartheta)=\zeta q^{-1}p,$ respectively.
\end{theorem}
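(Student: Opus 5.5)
We construct an explicit homomorphism $\cS_{\vartheta,J}\to\C$, or rather its inverse, on generators, and then compare dimensions. The reindexing $\rho$ gives $(\cS_{\vartheta,J})_{2m+1}^{2m}=(S_\vartheta)_1$ and $(\cS_{\vartheta,J})_{2m+2}^{2m+1}=(S_\vartheta)_2$. The generator spaces therefore have dimensions $3$ and $6$, matching $V$ (or $\wb V$) and $U$ (or $\wb U$). We define the map $x_a,\br x_a\mapsto z_a$ and $u^{ab},\br u^{ab}\mapsto c_{ab}z_az_b$ (ordered monomials, $a\le b$), with scalars $c_{ab}$ to be determined. The entries of $\vartheta$ are chosen simultaneously. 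By the discussion after Proposition \ref{prop:quantum-polynomial}, only $t(\vartheta)$ matters up to isomorphism, so we may fix a convenient normalization. By Lemma \ref{lem:quantum-veronese-pbw}, $\cS_{\vartheta,J}$ is quadratic. It therefore suffices to show that the quadratic relations of $\L$ together with $t_0,t_1$ (and their $\theta$-translates) are sent to zero, and that the resulting surjection $\L/\langle\mathcal T\rangle\to\cS_{\vartheta,J}$ is an isomorphism.

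\textbf{Step 1: matching relations.} Each of the $7$ relations $\rr_1,\dots,\rr_7$ in $V\otimes U$ must map into the $18-10=8$-dimensional kernel of $(S_\vartheta)_1\otimes(S_\vartheta)_2\to(S_\vartheta)_3$, and so must $t_0$. Together these span eight independent tensors by disjointness of monomial supports, so they must span exactly that kernel. The multidegree grading of Lemma \ref{lem:first-contractions} makes this a small computation. For example, $\rr_1=x_1u^{33}-px_3u^{13}$ maps to $c_{33}z_1z_3^2-pc_{13}z_3z_1z_3$, which forces $c_{33}\wt\vartheta_{31}=pc_{13}$ after reordering. The relations $\rr_2,\dots,\rr_6$ similarly fix the ratios $\wt\vartheta_{ab}$ in terms of $p,q$ and the $c$'s. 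Then $\rr_7$ and $t_0$ in multidegree $(1,1,1)$ together impose two linear conditions on the three ordered terms of $z_1z_2z_3$. These conditions must reduce to the single constraint $\wt\vartheta_{12}\wt\vartheta_{23}\wt\vartheta_{31}=\zeta^{-1}p/q$. This is where $\zeta$ enters via \eqref{quotienteq}: $x_2u^{31}=\zeta^{-1}x_1u^{23}$ and $x_3u^{12}=\zeta x_1u^{23}$. The same check is then done for $\rs_1,\dots,\rs_7,t_1$ in $U\otimes\wb V$ against $(S_\vartheta)_2\otimes(S_\vartheta)_1\to(S_\vartheta)_3$. The barred relations follow by the $2$-periodicity, because the assignment is the same on barred generators.

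\textbf{Step 2: identifying $\C$ as quadratic.} We must know that $\C$ is quadratic with relations $\R+\kk t$, i.e.\ that the ideal $\langle\mathcal T\rangle$ is generated in degree two. This is automatic because $t_j$ are degree-two elements, so $\C$ is the quadratic algebra with relation spaces $\R_{j+2}^j+\kk t_j$, of dimension $8$. Step 1 identifies these relation spaces with those of $\cS_{\vartheta,J}$ under a bijection on generators. Since both algebras are quadratic, the map is an isomorphism, exactly as in the proof of Proposition \ref{prop:dual-veronese}. For $\underline\C$ the same computation with $\zeta$ replaced by $\zeta^{-1}$ gives $t(\underline\vartheta)=\zeta p/q$.

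\textbf{Main obstacle.} The delicate point is the consistency of Step 1. The scalars $c_{ab}$ and $\wt\vartheta_{ab}$ are overdetermined by the $2\times 8$ relations in two different orderings ($V\otimes U$ and $U\otimes\wb V$). One must verify that a single choice works for both, and that the residual freedom collapses exactly to the product $t(\vartheta)$. I would first solve the $V\otimes U$ equations, normalizing $c_{aa}=1$. This determines $c_{ab}$ for $a\ne b$ and the $\wt\vartheta_{ab}$ up to the cube-root ambiguity coming from the multidegree-$(1,1,1)$ part. I would then check the $U\otimes\wb V$ equations by direct substitution, using the cyclic symmetry of Proposition \ref{prop:normal} to reduce to one representative per cyclic orbit.
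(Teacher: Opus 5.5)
Your proposal is correct and is essentially the paper's proof. The paper uses exactly your ansatz, with $c_{aa}=1$, $c_{12}=\zeta^2\nu^2p^{-1}$, $c_{13}=\zeta\nu p^{-1}$, $c_{23}=\nu^2p^{-1}$ and $\wt\vartheta_{12}=\zeta\nu$, $\wt\vartheta_{23}=\nu$, $\wt\vartheta_{31}=\zeta\nu$ (where $\nu^3=p/q$); it checks that every relation, including those in $U\otimes\wb V$, maps to zero, and concludes from the count $18-10=8$ of quadratic relations on both sides.

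Two small corrections. First, $t_0$ is independent of $\rr_1,\ldots,\rr_7$ not by disjointness of supports (it overlaps $\rr_7$), but because it contains only two of the three monomials of $\rr_7$. Second, the cube-root ambiguity in the $\wt\vartheta_{ab}$ comes from the pairs $(\rr_2,\rr_4)$, $(\rr_3,\rr_5)$, $(\rr_1,\rr_6)$, not from the multidegree-$(1,1,1)$ part; after that, $\rr_7$ only forces $t(\vartheta)\in\{\zeta^{\pm1}p/q\}$, and it is $t_0$ that selects $\zeta^{-1}p/q$.
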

\begin{proof}
Choose the representative
\[
 \wt\vartheta_{12}=\zeta\nu,\qquad
 \wt\vartheta_{23}=\nu,\qquad
 \wt\vartheta_{31}=\zeta\nu.
\]
Then $t(\vartheta)=\zeta^{-1}\nu^3=\zeta^{-1}p/q,$ and the defining relations \eqref{com} of $S_\vartheta$ take the form
\begin{equation}\label{eq:quantum-plane-rules}
 z_2z_1=\zeta\nu z_1z_2,\qquad
 z_3z_2=\nu z_2z_3,\qquad
 z_3z_1=\zeta^2\nu^{-1}z_1z_3.
\end{equation}
For each $j\in\ZZ,$ define a linear map $\C_{j+1}^{j}\to(\cS_{\vartheta,J})_{j+1}^{j}=(S_\vartheta)_{\rho(j+1)-\rho(j)}$ by
\begin{equation}\label{eq:plane-map}
\begin{aligned}
 x_i,\br x_i&\longmapsto z_i,&u^{ii},\br u^{ii}&\longmapsto z_i^2,\\
 u^{12},\br u^{12}&\longmapsto\zeta^2\nu^2p^{-1}z_1z_2,&
 u^{13},\br u^{13}&\longmapsto\zeta\nu p^{-1}z_1z_3,&
 u^{23},\br u^{23}&\longmapsto\nu^2p^{-1}z_2z_3.
\end{aligned}
\end{equation}
Substitution in \eqref{commut1}, \eqref{commut2}, and \eqref{quotienteq} uses $\nu^3=p/q$ and gives zero for every relation.
For instance, $x_1u^{33}=px_3u^{13}$ becomes
$z_1z_3^2=\zeta\nu z_3z_1z_3,$ and
$u^{22}x_1=pu^{12}x_2$ becomes
$z_2^2z_1=\zeta^2\nu^2z_1z_2^2.$
Both equalities follow from \eqref{eq:quantum-plane-rules}.
The relation $x_2u^{13}=\zeta^2x_1u^{23}$ follows by moving $z_2$ past $z_1.$
For the remaining relations in \eqref{commut1}, \eqref{commut2}, and \eqref{quotienteq}, we substitute \eqref{eq:plane-map} and rewrite both cubic products in the order $z_1,z_2,z_3$ using \eqref{eq:quantum-plane-rules}.
The resulting coefficients agree because $\nu^3=p/q.$
The maps \eqref{eq:plane-map} are isomorphisms on the generator spaces.
The tensor representatives of $t_0,t_1$ from Proposition \ref{prop:normal} are independent of $\rr_1,\ldots,\rr_7$ and $\rs_1,\ldots,\rs_7,$ respectively: each has two of the three monomials in \eqref{trigon1} and \eqref{trigon2}, respectively, while the tensors in \eqref{commut1} and \eqref{commut2} have supports disjoint from those of $\rr_7$ and $\rs_7,$ respectively.
Thus, the quadratic relation space $\ker(\C_{i+1}^{i}\otimes\C_{i+2}^{i+1}\to\C_{i+2}^{i})$ has dimension eight for every $i\in\ZZ.$
By Lemma \ref{lem:quantum-veronese-pbw}, the algebra $\cS_{\vartheta,J}$ is quadratic, and its quadratic relation spaces have dimension $18-\dim(S_\vartheta)_3=18-10=8.$
The maps \eqref{eq:plane-map} therefore identify the quadratic relation spaces and extend to an isomorphism $\C\cong\cS_{\vartheta,J}.$
Moreover, the formulas in \eqref{eq:plane-map} agree at indices $j$ and $j+2,$ and $\rho(j+2)=\rho(j)+3.$
Hence, under this isomorphism, $\theta$ corresponds to the shift $(a,b)\mapsto(a+3,b+3)$ of both indices of $\cS_\vartheta.$
Replacing $\zeta$ by $\zeta^{-1}$ proves the assertion for $\underline\C.$
\end{proof}

For the identifications of graded section algebras, we use $S_\vartheta$ with relations \eqref{eq:quantum-plane-rules}.

\begin{corollary}\label{thm:quotient-pbw}\label{cor:quotient-dimensions}
The quadratic $\ZZ$\!--algebras $\C$ and $\underline\C$ are PBW and hence Koszul.
For $m\ge0,$ the components of $\C$ have dimensions
\[
 \dim\C_{j+2m}^{j}=\binom{3m+2}{2},\qquad
 \dim\C_{j+2m+1}^{j}=
 \begin{cases}\binom{3m+3}{2}&j\text{ even},\\
 \binom{3m+4}{2}&j\text{ odd}.
 \end{cases}
\]
The same formulas hold for $\underline\C.$
\end{corollary}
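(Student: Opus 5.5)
By Theorem \ref{thm:quantum-plane} we have isomorphisms $\C\cong\cS_{\vartheta,J}$ and $\underline\C\cong\cS_{\underline\vartheta,J}$ of $\ZZ$\!--algebras. The PBW and Koszul statements therefore reduce to Lemma \ref{lem:quantum-veronese-pbw}, and the dimension formulas reduce to counting in $S_\vartheta.$ Being PBW, and Koszulness via Theorem \ref{PP}, are invariant under isomorphism of quadratic $\ZZ$\!--algebras. Concretely, we pull back the lexicographically ordered monomial bases of the generator spaces $(S_\vartheta)_{\rho(j+1)-\rho(j)}$ along the isomorphism \eqref{eq:plane-map}; this gives a set of PBW generators for $\C.$ The same argument, with $\zeta$ replaced by $\zeta^{-1},$ handles $\underline\C.$

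For the dimensions we use $(\cS_{\vartheta,J})_j^i=(S_\vartheta)_{\rho(j)-\rho(i)}$ together with $\dim(S_\vartheta)_m=\binom{m+2}{2}$ from Proposition \ref{prop:quantum-polynomial}. It remains to compute $\rho(j+n)-\rho(j).$ Since $\rho(j+2)=\rho(j)+3,$ we get $\rho(j+2m)-\rho(j)=3m,$ hence $\dim\C_{j+2m}^j=\binom{3m+2}{2}.$ For odd length we use $\rho(j+2m+1)-\rho(j)=3m+(\rho(j+1)-\rho(j)).$ Here $\rho(2k+1)-\rho(2k)=1$ and $\rho(2k+2)-\rho(2k+1)=2.$ So the total degree is $3m+1$ when $j$ is even and $3m+2$ when $j$ is odd, which gives $\binom{3m+3}{2}$ and $\binom{3m+4}{2}$ respectively. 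These are exactly the claimed formulas.

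The only point needing care is the convention for which index is the lower one, i.e. that $\C_{j+n}^j$ corresponds to $(S_\vartheta)_{\rho(j+n)-\rho(j)}$ and not to the reverse difference. This is fixed by the generator spaces in \eqref{defalg}: $\C_{2k+1}^{2k}$ is spanned by the $x_i,$ which map to degree-one elements, and $\C_{2k+2}^{2k+1}$ by the $u^{ij},$ which map to degree-two elements, as in \eqref{eq:plane-map}. This parity check confirms the assignment of the two odd cases, and it is the only real obstacle; everything else is a direct transfer of earlier results.
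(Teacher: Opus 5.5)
Your proposal is correct and follows the paper's proof: you transfer the PBW and Koszul properties along the isomorphisms $\C\cong\cS_{\vartheta,J}$ and $\underline\C\cong\cS_{\underline\vartheta,J}$ of Theorem \ref{thm:quantum-plane} using Lemma \ref{lem:quantum-veronese-pbw}. You then read off the dimensions from Proposition \ref{prop:quantum-polynomial} via $\rho(j+2m)-\rho(j)=3m$ and the parity-dependent value $3m+1$ or $3m+2$ for odd length, and your index-convention check against \eqref{defalg} and \eqref{eq:plane-map} is consistent.
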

\begin{proof}
The PBW and Koszul properties follow from Theorem \ref{thm:quantum-plane} and Lemma \ref{lem:quantum-veronese-pbw}.
The dimensions follow from Proposition \ref{prop:quantum-polynomial}, since $\rho(j+2m)-\rho(j)=3m$ and $\rho(j+2m+1)-\rho(j)$ equals $3m+1$ or $3m+2$ according to the parity of $j.$
\end{proof}

\subsection{The Hilbert series}
We compute the Hilbert matrix of $\L$ from Koszulness and the dimensions of its quadratic dual.

\begin{corollary}\label{cor:hilbert-A}
Define the Hilbert matrices by
\[
 H_{\A}(z)_{ab}
 =\sum_{\substack{n\ge0\\n\equiv b-a\ (2)}}
       \dim\A_{a+n}^{a}\,z^n,\qquad
 a,b\in\{0,1\},\quad \A=\L,\C.
\]
For the Hilbert matrix $\widehat H(z)$ of $\widehat\L$ computed in \eqref{eq:dual-hilbert}, we have
\begin{equation}\label{eq:hilbert-A}
\begin{gathered}
 H_{\L}(z)=\widehat H(z)/(1-z^2)^4,\qquad
 H_{\C}(z)=\widehat H(z)/(1-z^2)^3,\\
 \widehat H(-z)\widehat H(z)=(1-z^2)^4I_2,
\end{gathered}
\end{equation}
where $I_2$ is the $2\times2$ identity matrix.
Moreover, $\widetilde R/(\theta^{-1}(t_0))\cong S_\vartheta^{(3)},$ where $\theta^{-1}(t_0)\in \widetilde R_1.$
\end{corollary}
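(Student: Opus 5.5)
The plan has three parts: a Koszul numerical identity for $H_\L$, a regular normal element for $H_\C$, and the section-algebra identification at the end.

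\emph{Step 1: $H_\L$.} Theorem \ref{thm:koszul} says $\L$ is Koszul. Hence the minimal resolution of $\mS_i$ has $n$-th term $\mP_{i+n}\otimes\widehat\L_i^{i+n}$, with $\dim\widehat\L_i^{i+n}=\dim F_{\rho(i+n)-\rho(i)}$. Taking dimensions of the components of this resolution in the slot $j$ gives the matrix identity
\[
 \widehat H(-z)\,H_\L(z)=I_2.
\]
Parity bookkeeping is automatic, because $\dim\widehat\L_i^{i+n}$ and $\dim\L_{i+n}^i$ depend only on the parity of $i$ and on $n$. Next I verify the identity $\widehat H(-z)\widehat H(z)=(1-z^2)^4I_2$ directly from \eqref{eq:dual-hilbert}. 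The diagonal entry is
\[
 (1+7z^2+z^4)^2-(3z+6z^3)(6z+3z^3)=1-4z^2+6z^4-4z^6+z^8 .
\]
The off-diagonal entries cancel because the diagonal entries are even functions of $z$ and the off-diagonal ones are odd. Consequently $H_\L(z)=\widehat H(-z)^{-1}=\widehat H(z)/(1-z^2)^4$.

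\emph{Step 2: $H_\C$.} Proposition \ref{prop:normal} shows that $\mathcal T$ is normalizing. I would show that $\mathcal T$ is regular on both sides, which gives $H_\L(z)(1-z^2)=H_\C(z)$. The cleanest route avoids a direct regularity proof. The Hilbert series of $\C$ can be computed independently from Corollary \ref{cor:quotient-dimensions}, and one checks that it equals $\widehat H(z)/(1-z^2)^3$. For the entry $(0,0)$, for example, $\sum_m\binom{3m+2}{2}z^{2m}=(1+7z^2+z^4)/(1-z^2)^3$, which is a standard Veronese computation; the other entries are handled the same way. Comparing with Step 1, $\dim\L_{j+n}^j=\dim\C_{j+n}^j+\dim\L_{j+n}^{j+2}$.

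The exact sequence $\L_{j+n}^{j+2}\xrightarrow{t_j\cdot}\L_{j+n}^j\to\C_{j+n}^j\to0$ then forces injectivity of left multiplication by $t_j$, since the dimensions already add up. Right regularity follows the same way using the equality of ideals. This is the step I expect to need care, because the whole argument hinges on the exactness of this sequence. Exactness holds because the ideal is $t\L=\L t$ by normality, so it is only a matter of tracking indices. If the dimension match failed at some entry, I would have to fall back on a Gröbner-type argument for regularity.

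\emph{Step 3: the section algebra.} From \eqref{eq:section-algebra}, $\widetilde R/(\theta^{-1}(t_0))$ has components $\L_0^{-2m}$ modulo the ideal generated by the sequence $\mathcal T$ at even positions. Normality and $2$-periodicity ensure that the two-sided ideal generated by $\theta^{-1}(t_0)$ in $\widetilde R$ is exactly $\langle\mathcal T\rangle_0^{-2m}$. Hence the quotient equals $\bigoplus_m\C_0^{-2m}$ with the product twisted by $\theta$. Theorem \ref{thm:quantum-plane} identifies $\C_0^{-2m}$ with $(S_\vartheta)_{3m}$, and shows that $\theta$ corresponds to the index shift by $3$. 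Under these identifications the twisted product becomes ordinary multiplication in $S_\vartheta$. Therefore the quotient is $S_\vartheta^{(3)}$.
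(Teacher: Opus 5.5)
Your proposal is correct and follows the paper's proof. It combines the Koszul identity $\widehat H(-z)H_\L(z)=I_2$ with a direct check of $\widehat H(-z)\widehat H(z)=(1-z^2)^4I_2$, reads off $H_\C$ from Corollary~\ref{cor:quotient-dimensions}, and obtains the section-algebra isomorphism as the even-index restriction of Theorem~\ref{thm:quantum-plane}, which you spell out in more detail than the paper. Two small remarks: the regularity deduction you add in Step~2 is not needed for this statement (it is the argument the paper gives afterwards in Lemma~\ref{lem:normal-basis}), and the off-diagonal cancellation in Step~1 also uses that the two diagonal entries of $\widehat H(z)$ are equal, not only the parity of the entries.
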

\begin{proof}
By Theorem \ref{thm:koszul}, the minimal projective resolutions of the simple right modules $\mS_i$ are the Koszul resolutions.
For $N\ge0,$ the components at index $i+N$ of the terms in the Koszul resolution of $\mS_i$ form an exact sequence of finite-dimensional vector spaces.
Taking the alternating sum of dimensions in this sequence gives
\[
 \sum_{n=0}^{N}(-1)^n
 \dim\widehat\L_i^{i+n}\,
 \dim\L_{i+N}^{i+n}
 =
 \begin{cases}1&N=0,\\0&N>0.\end{cases}
\]
Two-periodicity turns these identities into $\widehat H(-z)H_\L(z)=I_2.$
Using \eqref{eq:dual-hilbert}, we obtain
\[
 \widehat H(-z)\widehat H(z)=(1-z^2)^4I_2.
\]
Since $\widehat H(-z)$ has constant term $I_2,$ it is invertible as a matrix of formal power series.
This proves the formula for $H_\L$ in \eqref{eq:hilbert-A}.
The formula for $H_\C$ in \eqref{eq:hilbert-A} follows from Corollary \ref{cor:quotient-dimensions}.
The last assertion is the restriction to even indices of Theorem \ref{thm:quantum-plane}, since two successive steps in $J$ have total weight three.
\end{proof}

\subsection{Regularity and normal bases}
We prove regularity of the normalizing sequences by comparing the dimensions of $\L$ and its quotients, and then construct a basis of $\L.$

For a path in $\L_{j+n}^{j},$ write $t^h=t_jt_{j+2}\cdots t_{j+2h-2},$ with $t^0=e_j.$
The expanded index sequence of a word in the degree-one generators is obtained by replacing each $x_a$ by $(a)$ and each $u^{ab}$ by $(a,b),$ with $a\le b,$ and concatenating these sequences from left to right.
Bars are suppressed using $\theta.$

\begin{lemma}\label{lem:normal-basis}
Both normalizing sequences $\mathcal T$ and $\underline{\mathcal T}$ are regular on the left and on the right.
For every $j$ and $n\ge0,$ a basis of $\L_{j+n}^{j}$ consists of the composable words $t^h w,$ where $0\le2h\le n$ and $w\in\L_{j+n}^{j+2h}$ is a word in the degree-one generators with nondecreasing expanded indices.
\end{lemma}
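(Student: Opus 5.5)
The plan is a dimension count.

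We compare three things: the Hilbert series $H_\L=\widehat H/(1-z^2)^4$, the series $H_\C=\widehat H/(1-z^2)^3$ (and likewise for $\underline\C$) from Corollary \ref{cor:hilbert-A}, and a spanning set of words. Both sequences play symmetric roles, so we argue for $\mathcal T$; the case of $\underline{\mathcal T}$ follows by replacing $\zeta$ with $\zeta^{-1}$.

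\textbf{Step 1: spanning.} We show that the words $t^h w$ span $\L_{j+n}^j$, with $w$ a word in nondecreasing expanded indices. The argument is a rewriting procedure. Any adjacent pair $x_au^{bc}$ or $u^{bc}x_a$ whose expanded indices are not nondecreasing is rewritten modulo the relations \eqref{commut1}, \eqref{commut2}, \eqref{trigon1}, \eqref{trigon2}, and $t_0,t_1$. Each rewrite produces either a scalar multiple of an ordered word, lower in a suitable monomial order, or a term $t_j$ times something.

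The order would be lexicographic on expanded index sequences after grouping by multidegree. This works because the relations are homogeneous for the $\ZZ^3$-multidegree of Lemma \ref{lem:first-contractions}.

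Occurrences of $t$ produced in the middle of a word are moved to the left using the normalizing identities \eqref{eq:normal-automorphism}, $a\,t_{j+1}=t_j\theta(\gamma(a))$. These only rescale generators, so the procedure terminates by induction on $n$ and on the monomial order. The key local check is on the cubic multidegree components of $V\otimes U$ and $U\otimes\wb V$. There, modulo $\rr_1,\ldots,\rr_7$ and $t_0$, the nondecreasing words span the complement. This matches the count in Theorem \ref{thm:quantum-plane}: $\dim\C_{j+2}^j=10$ and $\dim\L_{j+2}^j=11$.

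\textbf{Step 2: counting.} The number of nondecreasing words $w$ in $\L_{j+n}^{j+2h}$ equals the number of nondecreasing expanded sequences of the right shape. The shape is fixed by the alternation $V,U,V,U,\dots$ and the start parity. These sequences are in bijection with the ordered monomials of $S_\vartheta$ of degree $\rho(j+n)-\rho(j+2h)$, i.e.\ with the PBW basis of $\C_{j+n}^{j+2h}$ via \eqref{eq:plane-map}. To see the bijection, note that a nondecreasing expanded sequence is precisely an ordered monomial $z_1^{a}z_2^{b}z_3^{c}$, and cutting it into blocks of sizes $1,2,1,2,\dots$ is unique. Hence the number of spanning words is $\sum_h\dim\C_{j+n}^{j+2h}$, i.e.\ the entries of $H_\C(z)/(1-z^2)$. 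By \eqref{eq:hilbert-A} this equals $H_\L(z)$, so the spanning set is a basis.

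\textbf{Step 3: regularity.} Left regularity follows from the basis. Left multiplication by $t_j$ sends the basis $\{t^hw\}$ of $\L_{j+2+n'}^{j+2}$ injectively onto part of the basis of $\L_{j+n'+2}^{j}$, namely $t^{h+1}w$. For right regularity, we use \eqref{eq:normal-automorphism} to rewrite $x\,t=t\,\theta\gamma(x)$. Since $\gamma$ and $\theta$ are automorphisms, right multiplication by $t$ equals left multiplication by $t$ composed with a bijection, hence it is injective.

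\textbf{Main obstacle.} Step 1 is where the difficulty lies: showing that the reduction really terminates in nondecreasing words across both parities of the gluing. Concretely, the words look like $\cdots u^{bc}x_a u^{de}\cdots$, and the reductions in $V\otimes U$ and in $U\otimes V$ must be compatible on overlaps. I would first avoid a full diamond-lemma check. Instead, spanning only needs the local statement: each two-letter window can be made nondecreasing modulo relations and $t$, with strict decrease in the order. This avoids overlap ambiguities, since the upper bound on dimension comes from the Hilbert series and not from a confluence argument.
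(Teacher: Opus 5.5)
Your proof is correct, but it runs in the opposite direction from the paper's.

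\textbf{The paper's route.} It proves regularity first, without any basis. By normality, $\L_{j+n}^{j+2}\xrightarrow{\,t_j\cdot\,}\L_{j+n}^{j}\to\C_{j+n}^{j}\to0$ is right exact, and so is the analogous sequence with right multiplication by $t_{j+n-2}$. Two-periodicity and \eqref{eq:hilbert-A} show that each source has dimension $\dim\L_{j+n}^{j}-\dim\C_{j+n}^{j}$, so both maps are injective. The basis is then obtained by lifting. Under \eqref{eq:plane-map}, the nondecreasing words go to scalar multiples of ordered monomials, so Theorem \ref{thm:quantum-plane} makes them a basis of $\C_{j+n}^{j}$. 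The now short exact sequence then gives $\{t^hw\}$ by induction on $n$.

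\textbf{Your route.} You prove spanning by rewriting, get independence from the count against $H_\L$, and deduce both regularities from the basis. Right regularity goes through the automorphism $\theta\gamma$ of \eqref{eq:normal-automorphism}. You left the local check unverified, but it holds: all sixteen rewrites are lex-decreasing. These are the rewrites in $V\otimes U$ modulo $\rr_1,\ldots,\rr_7,t_0$ and in $U\otimes\wb V$ modulo $\rs_1,\ldots,\rs_7,t_1$. You are also right that termination needs no confluence, since independence comes from the dimension count. One small imprecision: a rewrite such as $x_3u^{12}\mapsto -(1+\zeta^{-1})x_1u^{23}+\zeta^{-1}t_0$ produces a combination of a lower word and a $t$-term, not one or the other.

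\textbf{Comparison.} Your argument uses $\C$ only numerically, through the binomial count of nondecreasing words. It never needs the linear independence of their images in $\C$, which is the substantive content of Theorem \ref{thm:quantum-plane}. The price is the explicit local verification. The paper's route is shorter given Theorem \ref{thm:quantum-plane}, and it gets regularity before any basis is known. Your Step 1 could also be shortened to the paper's mechanism. The nondecreasing words span $\C_{j+n}^{j}$ because their images are the ordered monomials. The kernel of $\L_{j+n}^{j}\to\C_{j+n}^{j}$ is $t_j\L_{j+n}^{j+2}$ by normality. Induction on $n$ then gives spanning with no rewriting at all.
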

\begin{proof}
For $j\in\ZZ$ and $n\ge2,$ normality of $\mathcal T$ gives the right-exact sequences
\[
\begin{aligned}
 \L_{j+n}^{j+2}&\xrightarrow{\,t_j\,\cdot\,}
 \L_{j+n}^{j}\longrightarrow\C_{j+n}^{j}\longrightarrow0,\\
 \L_{j+n-2}^{j}&\xrightarrow{\,\cdot\,t_{j+n-2}\,}
 \L_{j+n}^{j}\longrightarrow\C_{j+n}^{j}\longrightarrow0.
\end{aligned}
\]
By two-periodicity and \eqref{eq:hilbert-A}, the dimension of each source is
$\dim\L_{j+n}^{j}-\dim\C_{j+n}^{j}.$
Both multiplication maps are therefore injective.
The same argument applies to $\underline{\mathcal T},$ since $\underline\C$ has the same component dimensions as $\C.$

Under \eqref{eq:plane-map}, the images of the words $w$ with nondecreasing expanded indices are nonzero scalar multiples of the ordered monomial basis of the corresponding component of $S_\vartheta.$
Their classes therefore form a basis of $\C_{j+n}^{j}.$
The first short exact sequence above now gives the basis $t^h w$ by induction on $n,$ starting with $n=0,1.$
\end{proof}

\subsection{Noetherian property}
We use the Ore subalgebra described in Appendix \ref{app:section-algebra} to prove that $\L,$ $\widetilde R,$ and $R$ are noetherian.

\begin{proposition}\label{prop:noetherian}
The algebras $\L,$ $\widetilde R,$ and $R=\widetilde R^{(2)}$ are noetherian on both sides.
\end{proposition}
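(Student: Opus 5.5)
The plan is to reduce everything to noetherianity of the graded algebra $\widetilde R$ and to derive it from the quotient by the normal element $\theta^{-1}(t_0)$. Put $t=\theta^{-1}(t_0)\in\widetilde R_1$. By Corollary \ref{cor:hilbert-A}, $\widetilde R/(t)\cong S_\vartheta^{(3)}$. This Veronese subalgebra of a noetherian algebra is noetherian on both sides: $S_\vartheta$ is a finitely generated module over $S_\vartheta^{(3)}$ that decomposes into the three Veronese summands, and Proposition \ref{prop:quantum-polynomial} gives noetherianity of $S_\vartheta$.

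First I would show that $t$ is a regular normal element of $\widetilde R$.
\begin{itemize}
\item Normality follows from Proposition \ref{prop:normal}. In the twisted product $a\star b=\theta^{-n}(a)b$, the identity $a\,t_{j+1}=t_j\theta(\gamma(a))$ of \eqref{eq:normal-automorphism} gives $\widetilde R\star t=t\star\widetilde R$. The reason is that the ideal generated by $\mathcal T$ in each component $\L_0^{-2m}$ is both $t_{\cdot}\L$ and $\L t_{\cdot}$, and $\theta$ fixes the sequence.
\item Regularity on both sides is Lemma \ref{lem:normal-basis} restricted to the relevant components.
\end{itemize}
The standard lemma then applies: if $B$ is connected graded, $t\in B_1$ is regular and normal, and $B/(t)$ is right (left) noetherian, then $B$ is right (left) noetherian. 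This is ATV, or \cite[Chapter on graded rings]{Rog}-type filtration arguments. Hence $\widetilde R$ is noetherian on both sides.

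Next I would pass to $R=\widetilde R^{(2)}$. The algebra $\widetilde R$ is generated in degree one, since $\L$ is, and it is finitely generated as a module over $R$ on each side, namely $\widetilde R=R\oplus \widetilde R_{\mathrm{odd}}$ with $\widetilde R_{\mathrm{odd}}=R\,\widetilde R_1$. A right ideal $I$ of $R$ satisfies $I\widetilde R\cap R=I$, so ascending chains in $R$ stabilize. This gives noetherianity of $R$ on both sides.

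For $\L$ itself I would argue the same way via the $\ZZ$\!--algebra version: $\L/\langle\mathcal T\rangle=\C\cong\cS_{\vartheta,J}$ by Theorem \ref{thm:quantum-plane}. The algebra $\cS_{\vartheta,J}$ is noetherian, because each $e_i\cS_{\vartheta,J}$ and $\cS_{\vartheta,J}e_i$ corresponds to a graded module over $S_\vartheta$ (a shifted sum of Veronese pieces) and $S_\vartheta$ is noetherian. The regular normalizing sequence $\mathcal T$ (Lemma \ref{lem:normal-basis}) then lifts noetherianity to $\L$. The lifting uses the usual argument: for a submodule $N\subseteq e_i\L$, consider the filtration by $N\cap t^h\L$ and its images in shifted copies of $\C$; boundedness below of the connected algebra makes the filtration argument finite in each degree. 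Alternatively, $e_i\L$ in even and odd indices is expressed through $\widetilde R$-modules via 2-periodicity: $\bigoplus_m\L_{i}^{i-2m}$ is a finitely generated graded $\widetilde R$-module up to the twist by $\theta$.

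I expect the main obstacle to be the last step, namely transferring the graded-ring lifting lemma to the $\ZZ$\!--algebra setting, in particular for the odd-index modules $e_{1}\L$. I would first try the reduction to $\widetilde R$: write $e_0\L$ and $e_1\L$ as graded right $\widetilde R$-modules, with the action twisted by $\theta$, and show each is finitely generated. Generation comes from degrees $\le 1$ because $\L$ is generated in degree one. Submodules of $e_i\L$ then correspond to $\widetilde R$-submodules, so noetherianity of $\widetilde R$ suffices. The left side is analogous by the involution of Lemma \ref{inv}.
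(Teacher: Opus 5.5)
Your argument for $\widetilde R$ is correct but takes a different route from the paper. The paper shows that $\widetilde R$ is finite free on both sides over the iterated Ore extension $\Gamma$ of Proposition \ref{prop:ore-subalgebra}. You instead lift noetherianity from $\widetilde R/(t)\cong S_\vartheta^{(3)}$ along the regular normal element $t$ by the Artin--Tate--Van den Bergh lemma, which is shorter and bypasses Appendix \ref{app:section-algebra}. Your step for $R$ is the paper's.

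The gap is in the step for $\L$. Your preferred reduction to $\widetilde R$ fails. The map $\theta$ has degree two, so no power of it relates an odd index to an even one. Nor does $\L$ have any periodicity of odd degree, since $\dim\L_{j+1}^{j}$ alternates between $3$ and $6$. The odd-indexed part of $e_i\L$, for example $\bigoplus_{m\ge0}\L_{1+2m}^{1}$, is a module over $\L_{2\ZZ+1}$. This is the $\ZZ$-algebra of the odd section algebra $\widetilde R_{\mathrm{odd}}=\bigoplus_{m\ge0}\L_1^{1-2m}$ (not the odd-degree part of $\widetilde R$), and $\widetilde R$ does not act on it. Likewise, Lemma \ref{inv} only yields the covariant shift $\theta$, so it cannot exchange left and right.

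So you need, separately, that $\widetilde R_{\mathrm{odd}}$ is noetherian on both sides. The paper gets this from $\widetilde R_{\mathrm{odd}}\cong\widetilde R(q,p)^{\mathrm{op}}$ (reverse paths, interchange $p,q$). It then concludes using $\B=\L_{2\ZZ}\oplus\L_{2\ZZ+1}$ and the surjection $e_i\B\oplus(\L_{i+1}^{i}\otimes e_{i+1}\B)\twoheadrightarrow e_i\L$ coming from generation in degree one. In your framework the missing input is cheap. The element $\theta^{-1}(t_1)\in\L_1^{-1}$ is regular and normal in $\widetilde R_{\mathrm{odd}}$. By Theorem \ref{thm:quantum-plane} the quotient is again $S_\vartheta^{(3)}$, because $\rho(1)-\rho(1-2m)=3m$.

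Your alternative, lifting from $\C$, is viable, but the sketch does not prove it. Each layer $(N\cap t^h\L)/(N\cap t^{h+1}\L)$ is finitely generated, and only finitely many layers meet a given index. That still does not bound the total number of generators, since infinitely many layers may contribute.

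What transfers verbatim to $\ZZ$-algebras is the actual ATV argument:
\begin{enumerate}
\item By Zorn's lemma, take a maximal non-finitely generated submodule $N\subseteq e_i\L$, and put $N'_j=\{x\in\L_j^i: xt_j\in N_{j+2}\}$. Normality makes $N'\supseteq N$ a submodule with $N_{j+2}\cap\L_j^it_j=N'_jt_j$.
\item If $N'\ne N$, then $N'$ is finitely generated by maximality. Hence so is $N\cap\langle\mathcal T\rangle=\bigoplus_jN'_jt_j$, again using normality. The image of $N$ in the noetherian module $e_i\C$ is also finitely generated, so $N$ is finitely generated.
\item If $N'=N$, choose a finitely generated $N_0\subseteq N$ lifting generators of the image of $N$ in $e_i\C$. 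Then $N_k=(N_0)_k+N_{k-2}t_{k-2}$ for all $k$, and induction on $k$ gives $N=N_0$.
\end{enumerate}
Either case contradicts the choice of $N$. With the mirror argument on the left, this makes $\L$ noetherian on both sides for both parities at once, and $\widetilde R$ and $R$ then follow by restriction.
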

\begin{proof}
By Proposition \ref{prop:ore-subalgebra} in Appendix \ref{app:section-algebra}, the algebra $\widetilde R$ is finite free on both sides over the noetherian iterated Ore extension $\Gamma.$
Every ascending chain of right ideals of $\widetilde R$ is a chain of right $\Gamma$-submodules of the noetherian right $\Gamma$-module $\widetilde R,$ so it stabilizes.
The same argument on the left proves that $\widetilde R$ is noetherian on both sides.
Define the graded algebra $\widetilde R_{\mathrm{odd}}=\bigoplus_{m\ge0}\L_1^{1-2m}$ with multiplication $a\star b=\theta^{-n}(a)b$ for homogeneous $a,b$ of degrees $m,n.$
Reversing paths and interchanging $p,q$ gives a graded isomorphism $\widetilde R_{\mathrm{odd}}\cong\widetilde R(q,p)^{\mathrm{op}},$ so $\widetilde R_{\mathrm{odd}}$ is noetherian on both sides.
Put $\B=\L_{2\ZZ}\oplus\L_{2\ZZ+1}.$
The two summands are the $\ZZ$\!--algebras associated with $\widetilde R$ and $\widetilde R_{\mathrm{odd}},$ respectively, so $\B$ is noetherian.
Since $\L$ is generated in degree one, multiplication gives a surjection of right $\B$-modules
\[
 e_i\B\oplus\bigl(\L_{i+1}^{i}\otimes_\kk e_{i+1}\B\bigr)
 \twoheadrightarrow e_i\L,\qquad i\in\ZZ.
\]
The source is a finite direct sum of noetherian projective right $\B$-modules because $\L_{i+1}^{i}$ is finite-dimensional.
Thus, the module $e_i\L$ is noetherian as a right $\B$-module, and hence as a right $\L$-module.
The left-module argument is identical, proving that $\L$ is noetherian on both sides.
Projection to even degrees gives $I\widetilde R\cap R=I$ for every right ideal $I$ of $R,$ and similarly on the left.
For an ascending chain $I_1\subseteq I_2\subseteq\cdots$ of right ideals of $R,$ the chain $I_1\widetilde R\subseteq I_2\widetilde R\subseteq\cdots$ stabilizes because $\widetilde R$ is right noetherian.
Intersecting the extended ideals with $R$ gives $I_n=(I_n\widetilde R)\cap R,$ so the ideals $I_n$ are equal for all sufficiently large $n.$
The same argument applies to left ideals.
Thus, the algebra $R$ is noetherian on both sides.
\end{proof}

\section{Regularity and section algebras}\label{sec:regularity}
In this section, we prove that $\L$ is AS-regular of dimension four.
We also prove that the algebras $\widetilde R$ and $R$ are Koszul and AS-Gorenstein, and compute their Hilbert series.

\subsection{Artin--Schelter regularity}
We deduce Artin--Schelter regularity from Koszulness and the Frobenius pairings of $\widehat\L.$

\begin{theorem}\label{thm:regular}
The $\ZZ$\!--algebra $\L$ is AS-regular of dimension $4$ and Gorenstein parameter $4.$
\end{theorem}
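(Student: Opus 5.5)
We assemble the four conditions of Definition \ref{ASreg} from what has already been proved: Koszulness (Theorem \ref{thm:koszul}), the structure of the quadratic dual (Corollary \ref{prop:dual}), the Frobenius pairings (Corollary \ref{cor:dual-pairing}), and the Hilbert series (Corollary \ref{cor:hilbert-A}).

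\emph{Step 1: global dimension.} Since $\L$ is Koszul, the minimal projective resolution of $\mS_i$ is the Koszul resolution
\[
 0\to\mP_{i+4}\otimes(\widehat\L_i^{i+4})^*\to\cdots\to\mP_{i+1}\otimes(\widehat\L_i^{i+1})^*\to\mP_i\to\mS_i\to0.
\]
Its term in homological degree $n$ is built from $\widehat\L_i^{i+n}$. By Corollary \ref{prop:dual}, this component vanishes for $n>4$ and is one-dimensional for $n=4$. So every simple module has projective dimension exactly $4$. Every finitely generated module over the connected algebra $\L$ has a minimal resolution controlled by Tor against simples, and these Tor groups vanish beyond degree $4$. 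Hence the global dimension of $\L$ is $4$.

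\emph{Step 2: polynomial growth.} The formula $H_\L(z)=\widehat H(z)/(1-z^2)^4$ from \eqref{eq:hilbert-A}, with $\widehat H$ polynomial, shows that $\dim\L_{i+n}^i$ is bounded by a cubic polynomial in $n$.

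\emph{Step 3: the Gorenstein condition.} This is the main step. The plan is to apply Proposition \ref{prop:bp-frobenius}, which says that a Koszul $\ZZ$\!--algebra of global dimension $d$ is Gorenstein with parameter $d$ exactly when $\widehat\L$ is Frobenius of index $d$. Corollary \ref{cor:dual-pairing} gives this Frobenius property with $d=4$, so $\L$ is Gorenstein with parameter $4$.

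If one prefers to check this directly, apply $\Hom(-,\L)$ to the Koszul resolution. This produces the complex of left modules
\[
 \L e_i\to\widehat\L_i^{i+1}\otimes\L e_{i+1}\to\cdots\to\widehat\L_i^{i+4}\otimes\L e_{i+4}.
\]
The perfect pairings $\widehat\L_i^{j}\otimes\widehat\L_j^{i+4}\to\widehat\L_i^{i+4}\cong\kk$ identify this complex with the Koszul resolution of the left simple module at $i+4$, up to the twist $\widehat\theta^{\,2}$ and the signs in \eqref{eq:graded-trace}. The twist and signs only change the identification by automorphisms, so exactness is preserved. The complex is therefore exact except at the last term, where the cohomology is $\kk$ concentrated at $j=i+4$. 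This gives $\Ext^q(\mS_i,\mP_j)=\kk$ for $q=4$, $j=i+4$, and zero otherwise.

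\emph{Step 4: the left-module conditions.} Run the same arguments for left modules, either by symmetry of the pairings or by applying the statements above to $\L^{\mathrm{op}}$. The opposite algebra is Koszul because its quadratic dual $\widehat\L^{\mathrm{op}}\cong\mathcal F_J$ is Koszul. Its dual is also Frobenius, since the pairings in Corollary \ref{cor:dual-pairing} are perfect in both variables.

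The only delicate point I expect is matching the indexing and sign conventions of \cite[Proposition 4.1]{BP} with the $\ZZ$\!--algebra conventions used here. In particular, one must check that the twist by $\widehat\theta^{\,2}$ in \eqref{eq:graded-trace} does not affect the identification of the dualized complex with a Koszul resolution.
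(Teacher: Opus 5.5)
Your proposal is correct and follows the paper's proof: global dimension from Koszulness together with $\widehat\L_i^j=0$ for $j-i>4$ and $\widehat\L_i^{i+4}\cong\kk$, polynomial growth from \eqref{eq:hilbert-A}, and the Gorenstein condition from Corollary \ref{cor:dual-pairing} combined with Proposition \ref{prop:bp-frobenius}. The one difference is in Step 1, where the paper applies $\mM\otimes_\L(-)$ to the length-four Koszul bimodule resolution and so bounds the projective dimension of every right module at once; your Tor argument only covers finitely generated modules, so to conclude it also needs Auslander's reduction to cyclic modules and left Koszulness to compute the Tor groups.
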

\begin{proof}
By Theorem \ref{thm:koszul} and Corollary \ref{prop:dual}, the Koszul bimodule resolution of $\L$ has length four.
For every right $\L$-module $\mM,$ applying the functor $\mM\otimes_\L(-)$ to this bimodule resolution gives a projective resolution of $\mM$ of length at most four.
Since $\Ext_\L^4(\mS_i,\mS_{i+4})\cong\widehat\L_i^{i+4}\cong\kk,$ the global dimension of $\L$ is four.
Corollary \ref{cor:dual-pairing} and Proposition \ref{prop:bp-frobenius} give the Gorenstein condition with parameter four.
Polynomial growth follows from \eqref{eq:hilbert-A}.
Thus, the algebra $\L$ is AS-regular.
\end{proof}

\subsection{Graded section algebras}
We prove Koszulness and the Gorenstein conditions for $\widetilde R$ and $R,$ and compute their Hilbert series.

\begin{proposition}\label{prop:section-algebras}
The graded algebras $\widetilde R$ and $R=\widetilde R^{(2)}$ are quadratic and Koszul.
They are AS-Gorenstein of injective dimension $4,$ with Gorenstein parameters $2$ and $1,$ respectively.
For every $n\ge0,$ the component dimensions are
\begin{equation}\label{eq:section-dimensions}
 \dim {\widetilde R}_n=3/2(n+1)^3-1/2(n+1),\qquad
 \dim R_n=12n^3+18n^2+8n+1.
\end{equation}
In particular, both algebras have polynomial growth, and their Hilbert series are
\begin{equation}\label{eq:section-hilbert}
 H_{\widetilde R}(z)=(1+7z+z^2)/(1-z)^4,\qquad
 H_R(z)=(1+35z+35z^2+z^3)/(1-z)^4.
\end{equation}
\end{proposition}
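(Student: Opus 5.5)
The plan is to obtain every assertion from the identification of $\widetilde R$ with the even Veronese subalgebra of $\L$, using the twist by $\theta$. Restricting $\L$ to the indices $2\ZZ$ gives a $\ZZ$\!--algebra $\L_{2\ZZ}$. By the definition \eqref{eq:section-algebra}, the degree-two periodicity $\theta$ identifies $\L_{2\ZZ}$ with the $\ZZ$\!--algebra associated with the graded algebra $\widetilde R$; this is the observation already used in the proof of Proposition \ref{prop:noetherian}. Under this identification the index step $2$ corresponds to degree $1$ of $\widetilde R$.

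\textbf{Quadraticity and Koszulness.} I will apply the $\ZZ$\!--algebra version of the Backelin--Fröberg theorem on Veronese subalgebras \cite[Chapter 3, Proposition 2.2]{PP}: a Veronese subalgebra of a Koszul algebra is quadratic and Koszul. The proof is formal in the graded setting and carries over verbatim to $\ZZ$\!--algebras with the simple modules $\mS_i$ in place of $\kk$. Applied to the Koszul algebra $\L$ of Theorem \ref{thm:koszul}, it gives that $\L_{2\ZZ}$, and hence $\widetilde R$, is quadratic and Koszul. Applying the same theorem once more to $R=\widetilde R^{(2)}$ gives that $R$ is quadratic and Koszul.

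\textbf{Dimensions and Hilbert series.} Since $\dim\widetilde R_n=\dim\L_0^{-2n}=\dim\L_{2n}^0$ by periodicity, the series $H_{\widetilde R}(z)$ is obtained by substituting $z^2\mapsto z$ in the entry $H_\L(z)_{00}$. By \eqref{eq:hilbert-A} and \eqref{eq:dual-hilbert}, this gives $(1+7z+z^2)/(1-z)^4$. Expanding as $\sum_n\bigl(\binom{n+3}{3}+7\binom{n+2}{3}+\binom{n+1}{3}\bigr)z^n$ and simplifying yields the cubic $3/2(n+1)^3-1/2(n+1)$. For $R$, I take $\dim R_n=\dim\widetilde R_{2n}$, which gives $12n^3+18n^2+8n+1$. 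Computing the even part of $H_{\widetilde R}$ then gives the numerator $1+35z+35z^2+z^3$; as a check, both formulas give $\dim R_1=39$. Polynomial growth is then immediate.

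\textbf{AS-Gorenstein property.} I will use the regular normal element $t=\theta^{-1}(t_0)\in\widetilde R_1$.
\begin{enumerate}
\item[(a)] By Lemma \ref{lem:normal-basis}, left and right multiplication by the $t_j$ are injective, so $t$ is a regular normal element of $\widetilde R$. By Corollary \ref{cor:hilbert-A}, the quotient is $\widetilde R/(t)\cong S_\vartheta^{(3)}$.
\item[(b)] Since $S_\vartheta$ is noetherian and AS-regular of dimension three with Gorenstein parameter $3$ (Proposition \ref{prop:quantum-polynomial}), and $3$ divides $3$, the Veronese criterion of Jørgensen--Zhang shows that $S_\vartheta^{(3)}$ is AS-Gorenstein with parameters $(3,1)$. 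I intend to use this criterion as a standard result from the literature; it is not established earlier in the paper.
\item[(c)] Next I apply the standard lifting lemma through a regular normal element of degree $1$ (Levasseur, or the Rees lemma applied to the change-of-rings spectral sequence). Here $\widetilde R$ is noetherian by Proposition \ref{prop:noetherian}, and the quotient $\widetilde R/(t)$ is AS-Gorenstein with parameters $(3,1)$. The lemma then gives that $\widetilde R$ is AS-Gorenstein with parameters $(4,1+1)=(4,2)$ on both sides.
\item[(d)] Finally, $R=\widetilde R^{(2)}$ is noetherian by Proposition \ref{prop:noetherian}. The Jørgensen--Zhang Veronese criterion applies again, since $2$ divides the Gorenstein parameter $2$, and shows that $R$ is AS-Gorenstein with injective dimension $4$ and parameter $1$.
\end{enumerate}

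\textbf{Main obstacle.} I expect the Gorenstein step to be the delicate one. It requires checking that the regular-normal-element lemma and the Veronese criterion apply in exactly the stated form, with the injective dimension preserved on both sides. If the normal-element route proves awkward, I would instead argue directly from the AS-regularity of $\L$ (Theorem \ref{thm:regular}). The plan there is to compute the local cohomology of $\L_{2\ZZ}$ from that of $\L$, using that the torsion functors commute with Veronese restriction (Proposition \ref{prop:veronese-tails}). One then reads off $\underline{\Ext}^4_{\widetilde R}(\kk,\widetilde R)\cong\kk(2)$ from the condition $\Ext_\L^4(\mS_i,\mP_{i+4})=\kk$, since the index shift $4$ corresponds to degree $2$ in $\widetilde R$.
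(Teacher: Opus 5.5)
Your proposal is correct and follows the paper's proof essentially step for step. You get Koszulness from the $\ZZ$\!--algebra version of the Backelin--Fr\"oberg Veronese argument of \cite{PP}, read the Hilbert series off $(H_\L(z))_{00}$, and obtain the Gorenstein property from J{\o}rgensen--Zhang for $S_\vartheta^{(3)}$, the graded Rees lemma through $t=\theta^{-1}(t_0)$ (regular by Lemma~\ref{lem:normal-basis}, normal by Proposition~\ref{prop:normal}), and J{\o}rgensen--Zhang once more for $R$; the only cosmetic difference is that you deduce Koszulness of $R$ from $\widetilde R$, whereas the paper applies the Veronese argument directly to $\L_{4\ZZ}$.
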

\begin{proof}
The proof of \cite[Chapter 3, Proposition 2.2(i)]{PP}, using the Koszul criterion for $\ZZ$\!--algebras in \cite[Chapter 4, Definition--Proposition 9.2]{PP}, shows that the reindexed restrictions $\L_{2\ZZ}$ and $\L_{4\ZZ}$ are Koszul.
The periodicities $\theta$ and $\theta^2$ identify these restrictions with the $\ZZ$\!--algebras associated with $\widetilde R$ and $R,$ respectively.
Hence, both graded algebras are Koszul and quadratic.
For the Hilbert series, the coefficients of $z^{2n}$ in $(H_\L(z))_{00}$ from \eqref{eq:hilbert-A} give
$H_{\widetilde R}(z)=(1+7z+z^2)/(1-z)^4.$
The coefficient of $z^n$ in this series is $3/2(n+1)^3-1/2(n+1).$
Since $R_n=\widetilde R_{2n},$ we obtain \eqref{eq:section-dimensions} and the formula for $H_R(z)$ in \eqref{eq:section-hilbert}.

For the Gorenstein conditions, Proposition \ref{prop:quantum-polynomial} shows that $S_\vartheta$ is noetherian and AS-regular of dimension three and Gorenstein parameter three.
By \cite[Theorem 3.6(1)]{JZ}, its third Veronese subalgebra $D=S_\vartheta^{(3)}$ is AS-Gorenstein of injective dimension three and parameter one.

Put $t=\theta^{-1}(t_0)\in\widetilde R_1,$ so that $D=\widetilde R/(t).$
Let $\sigma$ be the graded automorphism of $\widetilde R$ defined by $ta=\sigma(a)t$ for $a\in\widetilde R.$
The automorphism $\sigma$ preserves the augmentation $\widetilde R\to\widetilde R_0=\kk,$ so twisting by $\sigma$ does not change the action of $\widetilde R$ on the trivial module $\kk.$
The graded Rees lemma \cite[Section 3.4]{Lv}, applied to the regular normal element $t,$ gives isomorphisms of graded vector spaces
\[
 \underline{\Ext}_{\widetilde R}^{q+1}(\kk,\widetilde R)
 \cong\underline{\Ext}_{D}^{q}(\kk,D)(1),\qquad q\ge0.
\]
Also, we have $\Hom_{\widetilde R}(\kk,\widetilde R)=0,$ since $t$ is regular.
Applying the graded Rees lemma to $\widetilde R^{\mathrm{op}}$ gives
\[
 \underline{\Ext}_{\widetilde R^{\mathrm{op}}}^{q}(\kk,\widetilde R)
 \cong
 \begin{cases}
 \kk(2)&q=4,\\
 0&q\ne4.
 \end{cases}
\]
Moreover, \cite[Theorem 3.6(1)]{Lv} gives injective dimension four on both sides.
Thus, the algebra $\widetilde R$ is AS-Gorenstein with parameter two.
Applying \cite[Theorem 3.6(1)]{JZ} once more shows that $R=\widetilde R^{(2)}$ is AS-Gorenstein of injective dimension four and parameter one.
In both applications, the Veronese index is invertible in $\kk$ and divides the Gorenstein parameter.
For the gradings $D_n=(S_\vartheta)_{3n}$ and $R_n=\widetilde R_{2n},$ the Gorenstein parameters of $D$ and $R$ are equal to $1$.
Graded and ungraded injective dimensions agree for these positively graded noetherian algebras by \cite[Section 3.3, Lemma]{Lv}.
\end{proof}

\section{The Fano minifold and its anticanonical algebra}\label{sec:minifold}

Let $\D=\mathcal D(\QGr\L)^c=\qprf\!\operatorname{-}\L$ with its natural DG enhancement.
Write $\barP_i=\pi(e_i\L)\in\D,$ where $\pi:\Md\L\to\QGr\L$ is the quotient functor defined in Section \ref{subsec:z-algebras-tails}.

\Needspace{6\baselineskip}
\begin{theorem}\label{thm:minifold}
The category $\D$ is equivalent to the category of perfect complexes on a smooth proper three-dimensional noncommutative Fano minifold $\mathsf{M}_{72}(p,q).$
The objects
\[
 \mathcal P=(\barP_3,\barP_2,\barP_1,\barP_0)
\]
form a full strong exceptional collection and generate a geometric helix.
Its Euler matrix is
\begin{equation}\label{eq:euler-matrix}
 \Apr=\begin{pmatrix}
 1&3&11&18\\0&1&6&11\\0&0&1&3\\0&0&0&1
 \end{pmatrix}.
\end{equation}
There are isomorphisms $S(\barP_i)\simeq \barP_{i+4}[3]$ under which $S(f)$ corresponds to $\theta^2(f)[3]$ for every morphism $f:\barP_i\to \barP_j.$
The anticanonical algebra with respect to $\barP_0$ is $R={\widetilde R}^{(2)}.$
The automorphism $T_*$ induced by $T=S^{-1}[3]$ is maximally unipotent on $K_0(\D)\otimes\mathbb Q,$ and the Euler degree of $\mathsf M_{72}(p,q)$ is $72.$
\end{theorem}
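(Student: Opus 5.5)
The plan is to feed the results of Sections \ref{sec:dual}--\ref{sec:regularity} into Theorem \ref{thm:geometric-tails} and Proposition \ref{prop:helix-fano}. By Theorem \ref{thm:koszul}, Theorem \ref{thm:regular}, Corollary \ref{cor:dual-pairing} and Proposition \ref{prop:bp-frobenius}, $\L$ is Koszul, of global dimension $4$ and Gorenstein with parameter $4$. So $\L$ is geometric of period $4$, and by Proposition \ref{prop:noetherian} it is noetherian. Theorem \ref{thm:geometric-tails} with $d=4$ then gives $\D\simeq\operatorname{Perf}(B)$ with $B=\bigoplus_{0\le i,j<4}\L_i^j$. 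This $B$ is a finite-dimensional algebra of finite global dimension over a perfect field, hence smooth and proper; we take $\mathsf M_{72}(p,q)=B$. The same theorem shows that $(\barP_3,\ldots,\barP_0)$ is a full strong exceptional collection and that $(\barP_{-i})$ is a geometric helix with associated $\ZZ$\!--algebra $\L$.

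The Euler matrix comes directly from strongness: $\chi(\barP_i,\barP_j)=\dim\Hom(\barP_i,\barP_j)=\dim\L_i^j$ for $i\ge j$. These dimensions are read off from $H_\L(z)=\widehat H(z)/(1-z^2)^4$ in \eqref{eq:hilbert-A}, keeping track of parity. From $\L_1^0=V$ and $\L_3^2=\wb V$ we get the entries $3$. From $\L_2^1=U$ we get $6$. Degree-two components have dimension $18-7=11$. For $\L_3^0$ we need the $z^3$ coefficient of $(3z+6z^3)(1+4z^2+\cdots)$, which is $12+6=18$.

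For Serre duality, \eqref{eq:helix-serre} with $m=4$ gives $S(\barP_i)\simeq\barP_{i+4}[3]$ after reindexing the helix. The identification of $S(f)$ with $\theta^2(f)[3]$ is the step I expect to be the main obstacle: the isomorphisms must be natural in $f$, not only objectwise. The approach I would take runs through the bimodule Koszul resolution. The Serre functor on $\QGr\L$ is computed from $\mathbf R\Hom_\L(\L,\L\otimes\L)$, which is Koszul dual to the Frobenius structure of $\widehat\L$. The Nakayama twist is the automorphism appearing in \eqref{eq:graded-trace}, namely $\widehat\theta^{\,2}$ up to the sign $(-1)^{(j-i)(i+4-j)}$. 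This sign is absorbed by the choice $\varepsilon_i=(-1)^i\lambda\circ\psi_i^{i+4}$, which leaves $\theta^2$ on $\L$. I would then verify compatibility on the generators $\L_{i+1}^i$ of the morphism spaces and extend multiplicatively.

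Given this, $T=S^{-1}[3]$ sends $\barP_{i}$ to $\barP_{i-4}$, so $\Hom(\barP_0,T^m\barP_0)=\L_0^{-4m}=\widetilde R_{2m}=R_m$. Under this identification, $T^r(f)\circ g$ becomes $\theta^{-2r}(f)g$, which matches the product in \eqref{eq:section-algebra}; hence $R(\mathsf X,\barP_0)=\widetilde R^{(2)}$. Finally, \eqref{eq:serre-euler-matrix} gives $T_*=-\Apr^{-t}\Apr$. I would check directly that $(T_*-I)^3\ne0$ and $(T_*-I)^4=0$, which is maximal unipotence. Proposition \ref{prop:helix-fano} then makes $\mathsf M_{72}(p,q)$ a three-dimensional Fano minifold of geometric type. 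The Euler degree is computed from \eqref{eq:euler-degree-definition}: find the primitive kernel vector $v$ of $\Apr+\Apr^t$ and compare with its adjugate to obtain $D=72$.
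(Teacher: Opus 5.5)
Your proposal is correct. It shares the paper's skeleton: geometricity and noetherianity feed Theorem~\ref{thm:geometric-tails} and then Proposition~\ref{prop:helix-fano}, the Euler matrix is read off from $H_\L$, and the anticanonical algebra follows once $T(f)=\theta^{-2}(f)$. Your smoothness argument (finite global dimension over a perfect field, which holds because $B$ is directed) is equivalent to the paper's relative bar resolution. Several sub-steps, however, are handled differently.

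\textbf{Serre functor.} For objects you get $S(\barP_i)\simeq\barP_{i+4}[3]$ for free from \eqref{eq:helix-serre}. The paper obtains objects and morphisms at once from the Koszul equivalence of \cite{BP} between $\D$ and the stable category of finite-dimensional right $\widehat\L$-modules, under which $\barP_i\mapsto\widehat{\mS}_i[-i]$. It combines this with $S=\nu[-1]$ from \cite{GKO}, where the Nakayama functor $\nu$ is induced by the Frobenius periodicity $\widehat\theta^{\,2}$ read off from $\varepsilon'_i=(-1)^i\varepsilon_i$. This reduces the morphism-level claim directly to \eqref{eq:graded-trace}.

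Your route for morphisms goes through the dualizing bimodule $\Ext^4_{\L^e}(\L,\L^e)$, computed from the Koszul bimodule resolution. It is viable, but it is the least developed part of your plan. It needs a Serre duality theorem for tails of a noetherian AS-regular $\ZZ$\!--algebra expressing $S$ through that bimodule, and no such result is among the tools assembled here.

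As written, your sign remark is backwards. The factor $(-1)^i$ in $\varepsilon_i$ is what creates the sign $(-1)^{(j-i)(i+4-j)}$, and passing to $\varepsilon'_i$ removes it. This is harmless: a twist $f\mapsto(-1)^{j-i}f$ is absorbed by rescaling the isomorphisms $S(\barP_i)\simeq\barP_{i+4}[3]$ by $(-1)^i$.

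\textbf{Unipotence and Euler degree.} Your direct linear algebra does go through. Put $N=-\Apr^{-\mathsf t}\Apr-I$. Every column sum of $N$ vanishes, and $N^3=72\,v\mathbf 1^{\mathsf t}\neq0$ with $v=(-1,2,-2,1)^{\mathsf t}$, so $N^4=0$. The $(4,4)$ cofactor of $\Apr+\Apr^{\mathsf t}$ is $72$ and $v_4=1$, so $D=72$ follows straight from \eqref{eq:euler-degree-definition}.

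The paper instead computes $N^4=0$, and obtains $N^3\neq0$ and $D=72$ from the cubic growth $\dim R_m=12m^3+\cdots$ together with \cite[Lemma 3.6]{LO}. Your version depends only on $\Apr$. The paper's version ties the Euler degree to the Hilbert series of the anticanonical algebra, at the cost of relying on Proposition~\ref{prop:section-algebras} and an external lemma.
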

\begin{proof}
By Theorem \ref{thm:regular} and Proposition \ref{prop:noetherian}, $\L$ is geometric of period four and noetherian.
Theorem \ref{thm:geometric-tails} gives the full strong exceptional collection $(\barP_3,\barP_2,\barP_1,\barP_0)$ and the geometric helix $(\barP_{-i})_{i\in\ZZ}$ in $\D,$ whose directed $\ZZ$\!--algebra is $\L.$
It also gives $\D\simeq\operatorname{Perf}(B),$ where $B=\bigoplus_{0\le i,j\le3}\L_i^j.$
The algebra $B$ is finite-dimensional and directed, with semisimple diagonal subalgebra $B_0=\bigoplus_{i=0}^3\kk e_i\cong\kk^4.$
The normalized bar resolution of $B$ relative to $B_0$ has length at most three.
Thus, the algebra $B,$ viewed as a DG algebra concentrated in degree zero, is a smooth proper noncommutative scheme $\mathsf M_{72}(p,q)$ with $\operatorname{Perf}(\mathsf M_{72}(p,q))\simeq\D.$
By Proposition \ref{prop:helix-fano}, this scheme is a three-dimensional noncommutative Fano variety.

Put $\varepsilon'_i=(-1)^i\varepsilon_i.$
Equation \eqref{eq:graded-trace} becomes
\(
 \varepsilon'_i(ba)=\varepsilon'_j\bigl(\widehat\theta^{\,2}(a)b\bigr).
\)
Thus, the canonical Frobenius periodicity of \cite[Section 5, formula (5.1)]{BP} is $\widehat\theta^{\,2}.$
Let $\widehat{\mS}_i$ be the simple right $\widehat\L$-module at index $i.$
Under the Koszul equivalence between $\D$ and the stable category of finite-dimensional right $\widehat\L$-modules described in \cite[Propositions 5.2--5.3]{BP}, the object $\barP_i$ corresponds to $\widehat{\mS}_i[-i].$
The Nakayama functor on finite-dimensional right $\widehat\L$-modules is $\nu(\mM)=\mM\otimes_{\widehat\L}\widehat\L^{\vee},$ where $(\widehat\L^{\vee})_i^j=(\widehat\L_j^i)^*$ defines the dual bimodule.
The functor $\nu$ is induced by the canonical Frobenius periodicity and sends $\widehat{\mS}_i$ to $\widehat{\mS}_{i+4}.$
The Serre functor of the stable category of finite-dimensional right $\widehat\L$-modules is $\nu[-1]$ by \cite[Lemma 4.3]{GKO}.
We therefore have
\[
 \nu(\widehat{\mS}_i[-i])[-1]=\widehat{\mS}_{i+4}[-i-1]
 =\widehat{\mS}_{i+4}[-(i+4)][3].
\]
This gives isomorphisms $S(\barP_i)\simeq \barP_{i+4}[3].$
The Koszul equivalence identifies composition in the helix $\ZZ$\!--algebra $\L$ with Yoneda multiplication on extension groups of right $\widehat\L$-modules.
By quadratic duality, the induced action is $(\widehat\theta^{-2})^*=\theta^2,$ so under these isomorphisms $S(f)$ corresponds to $\theta^2(f)[3]$ for every morphism $f:\barP_i\to \barP_j.$
Consequently, the functor $T=S^{-1}[3]$ acts by $T(\barP_i)=\barP_{i-4}$ and $T(f)=\theta^{-2}(f).$
Restriction of scalars along $\theta$ preserves torsion modules and induces an autoequivalence $\widetilde T$ of $\D$ satisfying $\widetilde T(\barP_i)=\barP_{i-2}$ and $\widetilde T(f)=\theta^{-1}(f).$
Thus, we have $\widetilde T^{\,2}\simeq T$ and $\widetilde R_m=\Hom(\barP_0,\widetilde T^{\,m}\barP_0).$
It follows that
\[
 \Hom(\barP_0,T^m\barP_0)
 =\L_0^{-4m}={\widetilde R}_{2m}=R_m,
 \qquad f\star g=\theta^{-2n}(f)g,\qquad g\in R_n.
\]
The multiplication is that of \eqref{eq:section-algebra}, so this identifies the anticanonical algebra with $R.$
Restriction to $4\ZZ$ also gives $\QGr\L\simeq\QGr R$ by Proposition \ref{prop:veronese-tails}.

The helix identification and Corollary \ref{cor:hilbert-A} give the Euler matrix \eqref{eq:euler-matrix}.
Put $N=-\Apr^{-\mathsf t}\Apr-I.$
Using the matrix $\Apr$ in \eqref{eq:euler-matrix}, we obtain $N^4=0.$
By geometricity and \eqref{eq:section-dimensions}, the function $\chi(\barP_0,T^m\barP_0)=\dim R_m$ is a polynomial of degree three in $m,$ so $N^3\ne0.$
Since $\operatorname{rank}K_0(\D)=4,$ the automorphism $T_*$ is maximally unipotent.
The primitive vector $v=(-1,2,-2,1)^{\mathsf t}$ generates $\ker(\Apr+\Apr^{\mathsf t})$ and satisfies $(\Apr v)_4=1.$
By \cite[Lemma 3.6, formula (3.13)]{LO}, we have
\[
 \dim R_m=D(\Apr)m^3/6+O(m^2).
\]
Comparing with \eqref{eq:section-dimensions}, we obtain $D(\Apr)=6\cdot12=72.$
\end{proof}

\appendix
\section{Calculations}\label{sec:calculations}

We check the generator assignments \eqref{eq:dual-map} used in Proposition \ref{prop:dual-veronese}.
In the following calculations, $\psi_i^j$ denotes the substitution on composable tensor words with reversed multiplication, before passage to the quotient.
For example, the relations
$\upsilon^{13}\xi_3=-p\upsilon^{33}\xi_1$ and
$\br\xi_2\upsilon^{12}=-p\br\xi_1\upsilon^{22}$ become
$-q\nu^3y_1y_3^2=-py_1y_3^2$ and
$-q\nu y_1y_2^2=-p\nu^{-2}y_1y_2^2.$
For the first two tensors in each of the first three rows of \eqref{anticommut1}, substitution of \eqref{eq:dual-map} with reversed multiplication and rewriting the resulting products as scalar multiples of $y_1^a y_2^b y_3^c$ using the first three relations in \eqref{eq:truncated-quantum} give the following images under $\psi_0^2,$ in the same order:
\[
 (p-q\nu^3)y_1y_3^2,\quad
 0,\quad 0,\quad
 (p-q\nu^3)\nu^{-2}y_1^2y_2,\quad
 (p-q\nu^3)\nu^{-2}y_2^2y_3,\quad 0.
\]
For the corresponding tensors in \eqref{anticommut2}, the images under $\psi_1^3$ are
\[
 0,\quad
 (p-q\nu^3)\nu^{-2}y_1y_2^2,\quad
 (p-q\nu^3)\nu^{-2}y_2y_3^2,\quad
 0,\quad 0,\quad (p-q\nu^3)y_1^2y_3.
\]
All twelve images vanish because $p-q\nu^3=0.$
The last two tensors in each of \eqref{anticommut1} and \eqref{anticommut2} map to zero under $\psi_0^2$ and $\psi_1^3,$ respectively, because
\[
\begin{aligned}
 \psi_0^2(\upsilon^{23}\xi_1)=\psi_0^2(\upsilon^{13}\xi_2)=\psi_0^2(\upsilon^{12}\xi_3)&=-q\nu y_1y_2y_3,\\
 \psi_1^3(\br\xi_1\upsilon^{23})=\psi_1^3(\br\xi_2\upsilon^{13})=\psi_1^3(\br\xi_3\upsilon^{12})&=-q\nu y_1y_2y_3.
\end{aligned}
\]
We also have
\[
 \psi_0^2(\upsilon^{ii}\xi_i)=\psi_1^3(\br\xi_i\upsilon^{ii})=y_i^3=0,\qquad 1\le i\le3.
\]
This verifies all relations in \eqref{anticommut1}--\eqref{anticommut2}.

\section{The section algebra \texorpdfstring{$\widetilde R$}{R tilde}}\label{app:section-algebra}

We give a presentation of $\widetilde R$ and describe it as a finite free module over an iterated Ore extension.
Identify $\widetilde R_m$ with $\L_{2m}^{0}$ by $\theta^m,$ where $\theta$ is the degree-two periodicity defined in Section \ref{subsec:defining-algebra}.
The product is
\begin{equation}\label{eq:positive-section-product}
 a\cdot b=a\,\theta^m(b),\qquad a\in\L_{2m}^{0},\ b\in\L_{2n}^{0}.
\end{equation}
We suppress these shifts in path words and write $t=t_0\in\widetilde R_1.$

Using the fixed identifications $\wb V\cong V$ and $\wb U\cong U$ given by $\br x_i\mapsto x_i$ and $\br u^{ij}\mapsto u^{ij},$ put
\[
 E=(V\otimes U)/\langle \rr_1,\ldots,\rr_7\rangle,
 \qquad \operatorname{pr}:V\otimes U\longrightarrow E.
\]
Here $\operatorname{pr}$ is the quotient map, and the tensors $\rr_i$ and $\rs_i$ are given in \eqref{commut1}--\eqref{trigon1} and \eqref{commut2}--\eqref{trigon2}.
Define the subspace of quadratic relations by
\[
 J=(\operatorname{pr}\otimes\operatorname{pr})
 \bigl(V\otimes\langle \rs_1,\ldots,\rs_7\rangle\otimes U\bigr)
 \subseteq E\otimes E.
\]
Thus, the subspace $J$ is spanned by the images of $x_a\otimes \rs_\ell\otimes u^{bc}$ for $1\le a\le3,$ $1\le\ell\le7,$ and $1\le b\le c\le3.$

\begin{proposition}\label{prop:section-presentation}
There is an isomorphism of graded algebras
\[
 \widetilde R\cong T_\kk(E)/\langle J\rangle,
 \qquad \dim E=11,\qquad \dim J=82,
\]
where $T_\kk(E)$ is the tensor algebra with $E$ in degree one.
A basis of $\widetilde R_1$ consists of
\[
 t=x_1u^{23}-\zeta x_2u^{13},\qquad
 y_{abc}=x_a u^{bc},\qquad 1\le a\le b\le c\le3.
\]
These eleven elements generate $\widetilde R.$
\end{proposition}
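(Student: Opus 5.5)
The plan is to deduce the presentation from quadraticity of $\widetilde R$, which is Proposition \ref{prop:section-algebras}, and then identify the degree-one and degree-two pieces explicitly.

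\textbf{Degree one.} Under the identification $\widetilde R_m=\L_{2m}^0$ via $\theta^m$, we have
\[
 \widetilde R_1=\L_2^0=(V\otimes U)/\R_2^0.
\]
By \eqref{defalg} and Lemma \ref{7-dim}, $\R_2^0=\Im\Delta_{V,U}$, and after removing bars this is spanned by $\rr_1,\ldots,\rr_7$. Hence $\widetilde R_1=E$ and $\dim E=27-7=20$.

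This already conflicts with the stated value $\dim E=11$. Here $\dim U=6$, so $\dim(V\otimes U)=18$, not $27$, and the correct count is $18-7=11$. So the stated dimension holds.

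\textbf{Basis of $\widetilde R_1$.} The eighteen monomials $x_au^{bc}$ split by the relations as follows.
\begin{itemize}
\item Each of $\rr_1,\ldots,\rr_6$ expresses a monomial $x_au^{bc}$ with $a>b$, or with $a<b=c$, as a multiple of another monomial.
\item $\rr_7$ gives $x_3u^{12}=-x_1u^{23}-x_2u^{13}$.
\end{itemize}
Using these rewritings, one checks that the ten monomials $y_{abc}=x_au^{bc}$ with $a\le b\le c$ span together with $x_2u^{13}$. Since $t$ involves $x_2u^{13}$ with nonzero coefficient, the ten $y_{abc}$ together with $t$ span. Both sides have dimension $11$, so these eleven elements form a basis. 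Equivalently, this follows from Lemma \ref{lem:normal-basis} with $n=2$: the basis there is $t$ together with the words having nondecreasing expanded index sequence, and these are exactly the $y_{abc}$.

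\textbf{Generation.} Since $\L$ is generated in degree one, every path in $\L_{2m}^0$ is a product of $m$ length-two paths. Hence $\widetilde R$ is generated by $\widetilde R_1$.

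\textbf{Degree two.} Next compute the kernel of
\[
 E\otimes E\to\widetilde R_2=\L_4^0.
\]
Since $\L$ is quadratic, $\L_4^0$ is the quotient of $V\otimes U\otimes\wb V\otimes\wb U$ by
\[
 \R_2^0\otimes\wb V\otimes\wb U
 +V\otimes\R_3^1\otimes\wb U
 +V\otimes U\otimes\R_4^2 .
\]
Modulo the outer two summands, which give $E\otimes E$, the kernel is the image of $V\otimes\langle\rs_1,\ldots,\rs_7\rangle\otimes U$. By definition this image is $J$.

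\textbf{Counting.} We have $\dim E\otimes E=121$, and $\dim\widetilde R_2=\tfrac32\cdot27-\tfrac32=39$ by \eqref{eq:section-dimensions}. Therefore $\dim J=121-39=82$.

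\textbf{Full presentation.} Because $\widetilde R$ is quadratic (Proposition \ref{prop:section-algebras}), its relations are generated by the degree-two kernel $J$. This gives $\widetilde R\cong T_\kk(E)/\langle J\rangle$.

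\textbf{Main obstacle.} The delicate point is the bookkeeping of the periodicity twists in the product \eqref{eq:positive-section-product}. One must check that the middle relation space really is $\langle\rs_i\rangle$ under the bar-removal identification, and not a $\theta$-twisted version of it. I would verify this directly from \eqref{defalg}: $\R_3^1=\Im\Delta_{\wb V,\wb U}$ is spanned by the $\rs_i$, which lie in $U\otimes\wb V$. This matches $J$ exactly once $\wb V$ is identified with $V$.
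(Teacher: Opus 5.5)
Your proof is correct. In degrees one and two it is the paper's argument: you identify $E=\widetilde R_1=\L_2^0$, obtain the basis $t,y_{abc}$, take $J$ as the image of the straddling relations $\rs_\ell$, and get $\dim J=121-39=82$. Your elimination via $\rr_1,\ldots,\rr_7$ is an equally good check of Lemma~\ref{lem:normal-basis} for $n=2$.

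The one real difference is how you pass to the full presentation. You invoke quadraticity of $\widetilde R$ from Proposition~\ref{prop:section-algebras}, which the paper deduces from Koszulness of $\L$ (Theorem~\ref{thm:koszul}) via the Polishchuk--Positselski Veronese argument. The paper instead runs your degree-two computation with $m$ blocks. In $\L_{2m}^0$ each defining relation of $\L$ either lies inside one $VU$ block, which produces $E^{\otimes m}$, or straddles two adjacent blocks, which produces the summands $E^{\otimes k}\otimes J\otimes E^{\otimes(m-k-2)}$. So the presentation, and with it quadraticity of $\widetilde R$, is purely formal and needs no Koszulness.

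Your route is still valid. The quadraticity part of Proposition~\ref{prop:section-algebras} depends only on Theorem~\ref{thm:koszul}. Its Gorenstein part uses Proposition~\ref{prop:noetherian}, hence Appendix~\ref{app:section-algebra}, but you do not need that part, so there is no circularity. Still, you import a heavy result for an elementary fact.

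Two cosmetic fixes. Delete the false start ``$\dim E=27-7=20$''. Also correct the labels: by \eqref{defalg}, $\R_2^0=\Im\Delta_{\wb V,\wb U}$ and $\R_3^1=\Im\Delta_{\wb U,V}$, not $\Im\Delta_{V,U}$ and $\Im\Delta_{\wb V,\wb U}$. The spanning sets $\rr_i$ and $\rs_i$ you attach to them are the right ones.
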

\begin{proof}
A path in $\L_{2m}^{0}$ consists of $m$ successive $VU$ blocks.
The relations $\rr_i$ inside a block give the quotient $E,$ while the relations $\rs_i$ between adjacent blocks give the ideal generated by $J.$
Every defining relation of $\L$ is of one of these two types, so this gives the asserted presentation, with multiplication \eqref{eq:positive-section-product}.
By Lemma \ref{lem:normal-basis}, the classes of $x_1\otimes u^{23}-\zeta x_2\otimes u^{13}$ and $x_a\otimes u^{bc}$ with $1\le a\le b\le c\le3$ form a basis of $E.$
Finally, Corollary \ref{cor:hilbert-A} gives $\dim\widetilde R_2=39.$ Hence, we have $\dim J=11^2-39=82.$
\end{proof}

We recall the definition of an Ore extension \cite[Section 1.2]{MR}.
Let $B$ be a $\kk$-algebra, $\sigma:B\to B$ a $\kk$-algebra endomorphism, and $\delta:B\to B$ a $\kk$-linear map satisfying
\[
 \delta(bb')=\sigma(b)\delta(b')+\delta(b)b',\quad b,b'\in B.
\]

\begin{definition}\label{def:ore-extension}
The \emph{Ore extension} $B[z;\sigma,\delta]$ is the $\kk$-algebra containing $B$ which is free as a left $B$-module with basis $1,z,z^2,\ldots$ and whose multiplication is determined by
\[
 zb=\sigma(b)z+\delta(b),\quad b\in B.
\]
When $\delta=0,$ we write $B[z;\sigma].$
\end{definition}

Put $a=y_{111},\; b=y_{222},$ and $c=y_{333}.$

\begin{proposition}\label{prop:ore-subalgebra}
 The subalgebra $\Gamma\subseteq\widetilde R$ generated by $a,b,c,t$ has defining relations
\begin{gather}
 ba=(p/q)^3ab,\quad cb=(p/q)^3bc,\quad ac=(p/q)^3ca,\label{eq:ore-commutation}
 \qquad ta=at,\quad tb=bt,\quad tc=ct.
\end{gather}
It is the iterated Ore extension
\(
 \Gamma\cong\kk[a][b;\sigma_b][c;\sigma_c][t],\) where
 \[ \sigma_b(a)=(p/q)^3a,\quad
 \sigma_c(a)=(q/p)^3a,\quad\sigma_c(b)=(p/q)^3b.
\]
The algebra $\Gamma$ is noetherian on both sides.
The algebra $\widetilde R$ is free as a left and as a right $\Gamma$-module, with basis
\(
 1,\; y_{112},y_{113},y_{122},y_{123},y_{133},y_{223},y_{233},
 \; y_{112}y_{233}.
\)
\end{proposition}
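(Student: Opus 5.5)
I will verify the relations \eqref{eq:ore-commutation} directly, then use the normal basis of Lemma \ref{lem:normal-basis} and the Hilbert series of Corollary \ref{cor:hilbert-A} to prove freeness by a dimension count.

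\textbf{Step 1: the relations.} The relations involving $t$ will come from the normalizing identities \eqref{eq:normal-automorphism}. Moving $t$ past $a=x_1u^{11}$ produces $\gamma$, and $\gamma$ fixes $x_1$ and $u^{11}$. For $b=x_2u^{22}$ the factor is $\zeta^2\cdot\zeta=1$, and for $c=x_3u^{33}$ it is $\zeta\cdot\zeta^2=1$. The shift by $\theta$ built into the product \eqref{eq:positive-section-product} should then give $ta=at$, $tb=bt$, $tc=ct$.

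For $ba$, $cb$, $ac$ I will work in the quotient $D=\widetilde R/(t)\cong S_\vartheta^{(3)}$ of Corollary \ref{cor:hilbert-A}. Under \eqref{eq:plane-map} the elements $a,b,c$ map to $z_1^3,z_2^3,z_3^3$. In $S_\vartheta$ we have $z_2^3z_1^3=(\zeta\nu)^9z_1^3z_2^3=\nu^9z_1^3z_2^3=(p/q)^3z_1^3z_2^3$, and similarly for the other two pairs. This gives the relations modulo $t$.

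To lift them to $\widetilde R$, I will expand $ba$ in the basis $t^hw$ of Lemma \ref{lem:normal-basis} and argue with the $\ZZ^3$-multidegree grading of Lemma \ref{lem:first-contractions}, which the relations respect.
\begin{itemize}
\item The element $ba$ has multidegree $(3,3,0)$.
\item The element $t$ has multidegree $(1,1,1)$.
\end{itemize}
A term $t^hw$ of multidegree $(3,3,0)$ would need $h=0$, so no $t$-terms can occur. Hence the relation holds exactly, and the same argument handles $cb$ and $ac$.

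\textbf{Step 2: Ore structure.} The algebra $\kk[a][b;\sigma_b][c;\sigma_c][t]$ is a standard iterated Ore extension of automorphism type, with $t$ central. It is therefore noetherian on both sides by the Hilbert basis theorem for Ore extensions \cite[Section 1.2]{MR}. Its Hilbert series is $(1-z)^{-4}$, with basis the monomials $a^ib^jc^kt^l$.

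There is a surjection from this Ore extension onto $\Gamma$. I will show it is injective by showing that the monomials $a^ib^jc^kt^l$ remain linearly independent in $\widetilde R$. By regularity of $t$ (Lemma \ref{lem:normal-basis}) it suffices to treat $l=0$ modulo $t$. There the images $z_1^{3i}z_2^{3j}z_3^{3k}$ are distinct PBW monomials of $S_\vartheta$, hence independent.

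\textbf{Step 3: freeness.} The proposed basis has degrees $0,1$ (seven elements) and $2$ (one element, $y_{112}y_{233}$). A free $\Gamma$-module on it has Hilbert series $(1+7z+z^2)/(1-z)^4$, which equals $H_{\widetilde R}$ by \eqref{eq:section-hilbert}. So it suffices to prove that the basis generates $\widetilde R$ as a right $\Gamma$-module; linear independence then follows by comparing dimensions degree by degree. The left-module statement is symmetric.

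Generation reduces modulo $t$, by a graded Nakayama and induction argument using that $t$ is central in $\Gamma$ and regular, to a statement about $D$. In $D$ we need the corresponding elements to generate $D$ over $\kk[z_1^3,z_2^3,z_3^3]$. Now $D$ is spanned by the cubic monomials $z^\alpha$ with $|\alpha|\equiv0\pmod 3$, and these reduce modulo $(z_1^3,z_2^3,z_3^3)$ to exponents $\alpha$ with $0\le\alpha_i\le2$. The residues are:
\begin{itemize}
\item $1$, for $|\alpha|=0$;
\item seven cubic monomials, for $|\alpha|=3$ (those other than $z_1^3,z_2^3,z_3^3$);
\item $z_1^2z_2^2z_3^2$, for $|\alpha|=6$, which is a scalar multiple of the image of $y_{112}y_{233}$.
\end{itemize}
The seven cubic residues are exactly the images of the seven $y_{abc}$ with $abc$ not constant. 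This matches the proposed basis.

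\textbf{Main obstacle.} The hardest step will be lifting the relations and the generation statement from $D$ back to $\widetilde R$ without stray $t$-terms. I expect the multidegree argument to settle the relations. The lifting of generation will need a careful induction on degree and $t$-adic filtration.
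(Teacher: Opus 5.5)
Your proof is correct. Its skeleton is the paper's: pass to $D=\widetilde R/(t)\cong S_\vartheta^{(3)}$, then lift using that $t$ is normal and regular. Two steps, however, go by a genuinely different route.

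\textbf{The commutation relations.} The paper proves $ba=(p/q)^3ab$ by rewriting the path $x_2u^{22}\bar x_1\bar u^{11}$ directly in $\L$ with $\rr_2,\rr_4,\rs_2,\rs_4$, and gets $cb$, $ac$ by cyclic symmetry. You check the relation in $S_\vartheta$ and lift it with the $\ZZ^3$-multigrading. The lift is sound: all $\rr_i$, $\rs_i$ and $t$ are multihomogeneous, so the kernel $t\widetilde R_1$ of $\widetilde R_2\to D_2$ only has multidegrees $\ge(1,1,1)$ and therefore no $(3,3,0)$ component. This replaces the hand computation with an explanation of why no $t$-correction can appear.

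\textbf{No further relations in $\Gamma$.} Your regularity-plus-PBW argument is just an unpacked form of the paper's remark that the $t^ha^ib^jc^k$ are distinct basis words of Lemma \ref{lem:normal-basis}.

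\textbf{Freeness.} Both arguments prove generation by the same induction modulo $t$. For independence, the paper argues intrinsically: it reduces a $\Gamma$-linear relation modulo $t$, uses freeness of $D$ over $\Gamma/(t)$, cancels $t$ by regularity, and concludes with $\bigcap_m t^m\Gamma=0$. You instead get independence from the equality
\[
(1+7z+z^2)(1-z)^{-4}=H_{\widetilde R}(z).
\]
Your route is shorter, but it needs the exact Hilbert series of both $\Gamma$ (from your Step 2) and $\widetilde R$. The paper's route avoids any global dimension count.

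\textbf{Citation.} Take $H_{\widetilde R}$ from Corollary \ref{cor:hilbert-A} rather than from \eqref{eq:section-hilbert}. Proposition \ref{prop:section-algebras} uses noetherianity, which rests on this very proposition, so citing it looks circular. Its Hilbert-series part does come from Corollary \ref{cor:hilbert-A} alone, so there is no actual circularity.
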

\begin{proof}
For the relation $ba=(p/q)^3ab$ in \eqref{eq:ore-commutation}, apply $\rr_2,\rr_4$ from \eqref{commut1} and $\rs_2,\rs_4$ from \eqref{commut2}.
Cyclic permutation of the indices gives $cb=(p/q)^3bc$ and $ac=(p/q)^3ca.$
Equation \eqref{eq:normal-automorphism} gives their commutation with $t,$ since $\gamma$ fixes $a,b,c.$
For $h,i,j,k\ge0,$ the monomials $t^ha^ib^jc^k$ are distinct basis words of Lemma \ref{lem:normal-basis}, so there are no further relations in $\Gamma.$
Since $p,q\ne0,$ the maps $\sigma_b$ and $\sigma_c$ are automorphisms.
Ore extensions by automorphisms preserve the left and right noetherian property \cite[Theorem 1.2.9]{MR}.
Applying this result successively shows that $\Gamma$ is noetherian on both sides.

By Corollary \ref{cor:hilbert-A}, the quotient $D=\widetilde R/(t)$ is $S_\vartheta^{(3)}.$
The images of $a,b,c$ are $z_1^3,z_2^3,z_3^3$ under \eqref{eq:plane-map}.
The ordered monomial basis makes $D$ free on both sides over $\Gamma/(t).$
A basis is given by the monomials $z_1^iz_2^jz_3^k$ with $0\le i,j,k\le2$ and $i+j+k\equiv0\pmod3.$
By \eqref{eq:plane-map}, the quotient map $\widetilde R\twoheadrightarrow D\cong S_\vartheta^{(3)}$ sends $1,y_{112},y_{113},y_{122},y_{123},y_{133},y_{223},y_{233},y_{112}y_{233},$ respectively, to nonzero scalar multiples of
\[
 1,\quad z_1^2z_2,\quad z_1^2z_3,\quad z_1z_2^2,\quad z_1z_2z_3,\quad z_1z_3^2,\quad z_2^2z_3,\quad z_2z_3^2,\quad z_1^2z_2^2z_3^2.
\]
Write $w_1,\ldots,w_9$ for the nine elements of $\widetilde R$ listed in the proposition, in that order.
Since their images form a basis of $D$ as a left and as a right $\Gamma/(t)$-module, for each homogeneous $u\in\widetilde R_n$ with $n\ge1$ there exist coefficients $g_\ell,h_\ell\in\Gamma_{n-\deg w_\ell}$ for $\deg w_\ell\le n$ such that
\[
 u-\sum_{\deg w_\ell\le n}g_\ell w_\ell\in t\widetilde R_{n-1},\qquad u-\sum_{\deg w_\ell\le n}w_\ell h_\ell\in\widetilde R_{n-1}t.
\]
Induction on $n,$ starting with $\widetilde R_0=\kk w_1$ and using $t\in\Gamma_1,$ proves that $w_1,\ldots,w_9$ generate $\widetilde R$ as a left and as a right $\Gamma$-module.
Reducing a $\Gamma$-linear relation among $w_1,\ldots,w_9$ modulo $t$ shows that all its coefficients lie in $t\Gamma=\Gamma t.$
Regularity of $t$ allows us to cancel a factor $t$ and repeat the argument.
Thus, every coefficient belongs to $\bigcap_{m\ge0}t^m\Gamma=0,$ since $\Gamma$ is nonnegatively graded and $\deg t=1.$
\end{proof}

\end{document}